\documentclass[a4paper,twoside]{article}
\usepackage{a4}
\usepackage{amssymb}
\usepackage{amsmath}
\usepackage{upref}
\usepackage[colorlinks,citecolor=blue,linkcolor=blue]{hyperref}
\usepackage[dvipsnames]{color}
\usepackage[active]{srcltx}
\allowdisplaybreaks[2] 
%
%
%
\newcount\minutes \newcount\hours
\hours=\time \divide\hours 60 \minutes=\hours
 \multiply\minutes-60
\advance\minutes \time
\newcommand{\klockan}{\the\hours:{\ifnum\minutes<10 0\fi}\the\minutes}
\newcommand{\tid}{\today\ \klockan}
\newcommand{\prtid}{\smash{\raise 10mm \hbox{\LaTeX ed \tid}}}
\renewcommand{\prtid}{}
%
%
\makeatletter \pagestyle{headings} \headheight 10pt
\def\sectionmark#1{}
\def\subsectionmark#1{}
\newcommand{\sectnr}{\ifnum \c@secnumdepth >\z@
                 \thesection.\hskip 1em\relax \fi}
\def\@evenhead{\footnotesize\rm\thepage\hfil\leftmark\hfil\llap{\prtid}}
\def\@oddhead{\footnotesize\rm\rlap{\prtid}\hfil\rightmark\hfil\thepage}
\def\tableofcontents{\section*{Contents}
 \@starttoc{toc}}
\makeatother
%
%
\makeatletter
\def\@biblabel#1{#1.}
\makeatother
%
%
%
\makeatletter
\let\Thebibliography=\thebibliography
\renewcommand{\thebibliography}[1]{\def\@mkboth##1##2{}\Thebibliography{#1}
\addcontentsline{toc}{section}{References}
\frenchspacing 
\setlength{\@topsep}{0pt}
\setlength{\itemsep}{0pt}%
\setlength{\parskip}{0pt plus 2pt}%
} \makeatother
%
%
\makeatletter
\def\mdots@{\mathinner.\nonscript\!.%
 \ifx\next,.\else\ifx\next;.\else\ifx\next..\else
 \nonscript\!\mathinner.\fi\fi\fi}
\let\ldots\mdots@
\makeatother
%
%
\makeatletter
\let\Enumerate=\enumerate
\renewcommand{\enumerate}{\Enumerate%
\setlength{\@topsep}{0pt}
\setlength{\itemsep}{0pt}%
\setlength{\parskip}{0pt plus 1pt}%
\renewcommand{\theenumi}{\textup{(\alph{enumi})}}%
\renewcommand{\labelenumi}{\theenumi}%
}
\let\endEnumerate=\endenumerate
\renewcommand{\endenumerate}{\endEnumerate\unskip}
\makeatother
%
%
\makeatletter
\def\@seccntformat#1{\csname the#1\endcsname.\quad}
\makeatother
%
%
\newcommand{\authortitle}[2]{\author{#1}\title{#2}\markboth{#1}{#2}}
%
%
\newcommand{\art}[6]{{\sc #1, \rm #2, \it #3 \bf #4 \rm (#5), \mbox{#6}.}}
\newcommand{\auth}[2]{{#1, #2.}}
\newcommand{\idxauth}[2]{{#1, #2.}}
\newcommand{\artin}[3]{{\sc #1, \rm #2,  in #3.}}
\newcommand{\artprep}[3]{{\sc #1, \rm #2, #3.}}
\newcommand{\arttoappear}[3]{{\sc #1, \rm #2, to appear in \it #3}}
\newcommand{\book}[3]{{\sc #1, \it #2, \rm #3.}}
\newcommand{\AND}{{\rm and }}
%
%
\RequirePackage{amsthm}
\newtheoremstyle{descriptive}%
  {\topsep}   
  {\topsep}   
  {\rmfamily} 
  {}          
  {\bfseries} 
  {.}         
  { }         
  {}          
\newtheoremstyle{propositional}%
  {\topsep}   
  {\topsep}   
  {\itshape}  
  {}          
  {\bfseries} 
  {.}         
  { }         
  {}          
\theoremstyle{propositional}
\newtheorem{thm}{Theorem}[section]
\newtheorem{prop}[thm]{Proposition}
\newtheorem{lem}[thm]{Lemma}
\newtheorem{lemma}[thm]{Lemma}
\newtheorem{cor}[thm]{Corollary}
\theoremstyle{descriptive}
\newtheorem{deff}[thm]{Definition}
\newtheorem{example}[thm]{Example}
\newtheorem{remark}[thm]{Remark}
%
%
%
%
%
\makeatletter
\renewenvironment{proof}[1][\proofname]{\par
  \pushQED{\qed}%
  \normalfont
  \trivlist
  \item[\hskip\labelsep
        \itshape
    #1\@addpunct{.}]\ignorespaces
}{%
  \popQED\endtrivlist\@endpefalse
} \makeatother
%
%
\newcommand{\setm}{\setminus}
\renewcommand{\emptyset}{\varnothing}
%
%
%
%
%
\def\vint{\mathop{\mathchoice%
          {\setbox0\hbox{$\displaystyle\intop$}\kern 0.22\wd0%
           \vcenter{\hrule width 0.6\wd0}\kern -0.82\wd0}%
          {\setbox0\hbox{$\textstyle\intop$}\kern 0.2\wd0%
           \vcenter{\hrule width 0.6\wd0}\kern -0.8\wd0}%
          {\setbox0\hbox{$\scriptstyle\intop$}\kern 0.2\wd0%
           \vcenter{\hrule width 0.6\wd0}\kern -0.8\wd0}%
          {\setbox0\hbox{$\scriptscriptstyle\intop$}\kern 0.2\wd0%
           \vcenter{\hrule width 0.6\wd0}\kern -0.8\wd0}}%
          \mathopen{}\int}
%
%
%
%
\newcommand{\Cp}{{C_p}}
\DeclareMathOperator{\capp}{cap}
\DeclareMathOperator{\Chcapp}{Ch-cap}
\newcommand{\cp}{\capp_p}
\newcommand{\cpp}{\Chcapp_p}
\DeclareMathOperator{\diam}{diam}
\DeclareMathOperator{\dist}{dist}
\DeclareMathOperator{\Lip}{Lip}
\newcommand{\Lipc}{{\Lip_c}}
\DeclareMathOperator{\spt}{supp}
\newcommand{\supp}{\spt}


\DeclareMathOperator*{\essinf}{ess\,inf}
\DeclareMathOperator*{\essliminf}{ess\,lim\,inf}
\DeclareMathOperator{\fineint}{fine-int}

\newcommand{\bdry}{\partial}
\newcommand{\bdy}{\bdry}
\newcommand{\loc}{_{\rm loc}}
{\catcode`p =12 \catcode`t =12 \gdef\eeaa#1pt{#1}}      
\def\accentadjtext#1{\setbox0\hbox{$#1$}\kern   
                \expandafter\eeaa\the\fontdimen1\textfont1 \ht0 }
\def\accentadjscript#1{\setbox0\hbox{$#1$}\kern 
                \expandafter\eeaa\the\fontdimen1\scriptfont1 \ht0 }
\def\accentadjscriptscript#1{\setbox0\hbox{$#1$}\kern   
                \expandafter\eeaa\the\fontdimen1\scriptscriptfont1 \ht0 }
\def\accentadjtextback#1{\setbox0\hbox{$#1$}\kern       
                -\expandafter\eeaa\the\fontdimen1\textfont1 \ht0 }
\def\accentadjscriptback#1{\setbox0\hbox{$#1$}\kern     
                -\expandafter\eeaa\the\fontdimen1\scriptfont1 \ht0 }
\def\accentadjscriptscriptback#1{\setbox0\hbox{$#1$}\kern 
                -\expandafter\eeaa\the\fontdimen1\scriptscriptfont1 \ht0 }
\def\itoverline#1{{\mathsurround0pt\mathchoice
        {\rlap{$\accentadjtext{\displaystyle #1}
                \accentadjtext{\vrule height1.593pt}
                \overline{\phantom{\displaystyle #1}
                \accentadjtextback{\displaystyle #1}}$}{#1}}
        {\rlap{$\accentadjtext{\textstyle #1}
                \accentadjtext{\vrule height1.593pt}
                \overline{\phantom{\textstyle #1}
                \accentadjtextback{\textstyle #1}}$}{#1}}
        {\rlap{$\accentadjscript{\scriptstyle #1}
                \accentadjscript{\vrule height1.593pt}
                \overline{\phantom{\scriptstyle #1}
                \accentadjscriptback{\scriptstyle #1}}$}{#1}}
        {\rlap{$\accentadjscriptscript{\scriptscriptstyle #1}
                \accentadjscriptscript{\vrule height1.593pt}
                \overline{\phantom{\scriptscriptstyle #1}
                \accentadjscriptscriptback{\scriptscriptstyle #1}}$}{#1}}}}
%
%
\newcommand{\al}{\alpha}
\newcommand{\alp}{\alpha}
\newcommand{\dmu}{d\mu}
\newcommand{\eps}{\varepsilon}
\newcommand{\ga}{\gamma}
\newcommand{\Om}{\Omega}
\renewcommand{\phi}{\varphi}
\newcommand{\p}{{$p\mspace{1mu}$}}
\newcommand{\clEp}{{\itoverline{E}\mspace{1mu}}^p}
\newcommand{\bdyp}{\bdy_p} 
\newcommand{\R}{\mathbf{R}}
\newcommand{\Q}{\mathbf{Q}}
\newcommand{\eR}{{\overline{\R}}}
\newcommand{\K}{{\cal K}}
\newcommand{\oHpind}[1]{H_{#1}}     
%
%
%
%
%
\newcommand{\limplus}{{\mathchoice{\raise.17ex\hbox{$\scriptstyle +$}}
		{\raise.17ex\hbox{$\scriptstyle +$}}
		{\raise.1ex\hbox{$\scriptscriptstyle +$}}
		{\scriptscriptstyle +}}}
\newcommand{\limminus}{{\mathchoice{\raise.17ex\hbox{$\scriptstyle -$}}
		{\raise.17ex\hbox{$\scriptstyle -$}}
		{\raise.1ex\hbox{$\scriptscriptstyle -$}}
		{\scriptscriptstyle -}}}
%
%
\newcommand{\Np}{N^{1,p}}
\newcommand{\Nploc}{N^{1,p}\loc}
\newcommand{\Dp}{D^p}
\newcommand{\Dploc}{D^{p}\loc}
\newcommand{\Ga}{\Gamma}
\newcommand{\Lploc}{L^p\loc}
\newcommand{\imp}{\ensuremath{\Rightarrow} }
\makeatletter
\newcommand{\setcurrentlabel}[1]{\def\@currentlabel{#1}}
\makeatother
%
%
\numberwithin{equation}{section}

\newenvironment{ack}{\medskip{\it Acknowledgement.}}{}

\begin{document}

\authortitle{Anders Bj\"orn, Jana Bj\"orn and Visa Latvala}
{The Cartan, Choquet and Kellogg properties for the
fine topology on metric spaces}
\author{
Anders Bj\"orn \\
\it\small Department of Mathematics, Link\"opings universitet, \\
\it\small SE-581 83 Link\"oping, Sweden\/{\rm ;}
\it \small anders.bjorn@liu.se
\\
\\
Jana Bj\"orn \\
\it\small Department of Mathematics, Link\"opings universitet, \\
\it\small SE-581 83 Link\"oping, Sweden\/{\rm ;}
\it \small jana.bjorn@liu.se
\\
\\
Visa Latvala \\
\it\small Department of Physics and Mathematics,
University of Eastern Finland, \\
\it\small P.O. Box 111, FI-80101 Joensuu,
Finland\/{\rm ;}
\it \small visa.latvala@uef.fi
}

\date{}

\maketitle

\noindent{\small
 {\bf Abstract}. We prove the Cartan and Choquet properties for the fine topology
on a complete metric space equipped with a doubling measure
supporting a \p-Poincar\'e inequality, $1 < p< \infty$. We apply these
key tools to establish a fine version of the Kellogg property,
characterize finely continuous functions by means of quasicontinuous
functions, and show that capacitary measures
associated with Cheeger supersolutions are supported by the fine boundary of the set.}

\bigskip

\noindent {\small \emph{Key words and phrases}:
Capacitary measure,
Cartan property, Choquet property,
doubling measure,
fine Kellogg property,
finely continuous, finely open, fine topology,
metric space,
nonlinear potential theory,
\p-harmonic,
Poincar\'e inequality,
quasicontinuous, quasiopen,
thin.
}
\medskip

\noindent {\small Mathematics Subject Classification (2010):
Primary: 31E05; Secondary: 30L99, 31C40, 31C45, 35J92, 49Q20.}

\section{Introduction}

The aim of this paper is to
establish the Cartan and Choquet properties for the fine topology
on a complete metric space $X$ equipped with a doubling measure $\mu$
supporting a \p-Poincar\'e inequality, $1 < p< \infty$.
These properties are crucial for deep applications of the fine topology
in potential theory.
As applications of these key tools we establish
the fine Kellogg property and characterize finely continuous functions by means of
quasicontinuous functions.
We also show that capacitary measures associated with
Cheeger \p-supersolutions are supported by the fine boundary of the set
(not just by the metric boundary).

The classical fine topology is closely related to the Dirichlet
problem for the Laplace equation. Wiener~\cite{wiener} showed in 1924 that a
boundary point of a domain  is irregular if and only if
the complement  is thin at that point in a certain capacity
density sense, cf.\ Definition~\ref{deff-thinness}.
In 1939 Brelot~\cite{brelot39B}, \cite{brelot40} characterized thinness by a condition,
which is nowadays called the Cartan property.
The reason for this name is
that Cartan (in a letter to Brelot in 1940, see Brelot~\cite[p.~14]{brelot44})
connected the notion of thinness to the coarsest
topology making all superharmonic functions continuous.
Cartan~\cite{cartan46} coined
the name fine topology for such a topology.

Nonlinear potential theory associated with \p-harmonic functions
has been studied since the 1960s.
For extensive treatises and notes on the history,
see the monographs Adams--Hedberg~\cite{AdHe},
Heinonen--Kilpel\"ainen--Martio~\cite{HeKiMa} and
Mal\'y--Ziemer~\cite{MZ}.
Starting in the 1990s a lot of attention
has been given to analysis on metric spaces,
see e.g.\
Haj\l asz--Koskela~\cite{HaKo}, Heinonen~\cite{heinonen}
and Heinonen--Koskela~\cite{HeKo98}.
Around 2000 this initiated
studies of \p-harmonic and \p-superharmonic functions on
metric spaces without a differentiable structure,
see e.g.\
Bj\"orn--Bj\"orn~\cite{BBbook},
Bj\"orn--Bj\"orn--Shan\-mu\-ga\-lin\-gam~\cite{BBS},
Kinnunen--Martio~\cite{KiMa02},
Kinnunen--Shan\-mu\-ga\-lin\-gam~\cite{KiSh01} and
Shan\-mu\-ga\-lin\-gam~\cite{Sh-harm}.

The classical linear fine potential theory and fine topology
(the case $p=2$) have been
systematically studied since the 1960s.
Let us here just mention
Brelot~\cite{brelot71},
Fuglede~\cite{Fugl71}, \cite{Fug} and
Luke\v{s}--Mal\'y--Zaj\'i\v{c}ek~\cite{LuMaZa}, which include most of
the theory and the main references.
Some of these works are written in large generality including
topological spaces, general capacities and families of functions, and some results
thus apply also to the nonlinear theory.
At the same time, many other results rely indirectly on a linear structure, e.g.\
through potentials, integral representations and convex cones of superharmonic
functions, which are in general not available in the nonlinear setting.

The nonlinear fine potential theory started in
the 1970s on unweighted $\R^n$, see the notes to Chapter~12 in
\cite{HeKiMa} and Section~2.6 in \cite{MZ}.
For the fine potential theory associated with \p-harmonic functions
on unweighted $\R^n$, see \cite{MZ} and Latvala~\cite{LatPhD}.
The monograph \cite{HeKiMa}
is the main source for fine potential theory on weighted $\R^n$
(note that Chapter~21, which is only in the second addition, contains
some more recent results).
The study of fine potential theory on metric spaces is more recent,
see e.g.\
Bj\"orn--Bj\"orn~\cite{BBnonopen},
Bj\"orn--Bj\"orn--Latvala~\cite{BBL1},
J.~Bj\"orn~\cite{JB-pfine}, Kinnunen--Latvala~\cite{KiLa}
and Korte~\cite{korte08}.
For further references
to nonlinear and fine nonlinear potential theory,
see the introduction to \cite{BBL1}.

Recently, in \cite{BBL1}, we established the
so-called  \emph{weak Cartan property}, which
says that if $E \subset X$ is thin at $x_0 \notin E$, then there exist
a ball $B \ni x_0$ and superharmonic functions $u,u'$ on $B$ such that
\[
   v(x_0)<\liminf_{E\ni x\to x_0} v(x),
\]
where $v=\max\{u,u'\}$.

The superharmonic functions considered in~\cite{BBL1} were based on upper gradients,
and because of the lack of a differential equation, we did not succeed
in obtaining the full Cartan property
as in $\R^n$, where $v$ itself can be chosen superharmonic, cf.\
Theorem~\ref{thm-cartan} below.
Indeed, the proof of the full Cartan property seems to be as hard
as the proof of the Wiener criterion, which is also
open in the nonlinear potential theory based on upper gradients,
but is known to hold in the potential theory based on Cheeger gradients,
see J.~Bj\"orn~\cite{JB-Matsue}.
Nevertheless,
the weak Cartan property in~\cite{BBL1} was enough to conclude
that the fine topology
is the coarsest one making all superharmonic functions continuous.

Here, we instead focus on Cheeger superharmonic functions based on Cheeger's
theorem yielding a vector-valued Cheeger gradient.
In this case we do have an equation available and this enables us
to establish the following full Cartan property.

\begin{thm}\label{thm-cartan}
\textup{(Cartan property)}
Suppose that $E$ is thin at $x_0\in\itoverline{E}\setminus E$.
Then there is a bounded positive Cheeger superharmonic function $u$
in an open neighbourhood of $x_0$ such that
\[
u(x_0)<\liminf_{E\ni x\to x_0}u(x).
\]
\end{thm}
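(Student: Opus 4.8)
The plan is to produce first an \emph{unbounded} nonnegative Cheeger superharmonic function $w$ near $x_0$ which blows up along $E$ at $x_0$ but stays finite at $x_0$ itself, and then to take $u=\min\{w,c\}$ for a suitable constant $c$; since the minimum of a Cheeger superharmonic function and a constant is again Cheeger superharmonic, this immediately yields a bounded positive candidate. Write $B_j=B(x_0,2^{-j})$ and let $A_j=\itoverline{B_j}\setminus B_{j+1}$ be the dyadic annuli centred at $x_0$. By Definition~\ref{deff-thinness}, together with the standard comparison between the Wiener-type integral and its dyadic sum, thinness of $E$ at $x_0$ is equivalent to $\sum_j\delta_j<\infty$, where
\[
   \delta_j:=\biggl(\frac{\cp(\itoverline{E}\cap A_j,2B_j)}{\cp(B_j,2B_j)}\biggr)^{1/(p-1)}.
\]
I fix a ball $B=B(x_0,r_0)$ with $X\setminus B\ne\emptyset$ and from now on use only indices $j$ large enough that $\itoverline{2B_j}\subset\tfrac12 B$. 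By the outer regularity of $\cp$ I then pick, for each such $j$, an \emph{open} set $G_j$ with
\[
   \itoverline{E}\cap A_j\subset G_j\subset\{x:2^{-j-2}<d(x,x_0)<2^{-j+1}\}
   \quad\text{and}\quad
   \cp(G_j,2B_j)\le 2\,\cp(\itoverline{E}\cap A_j,2B_j);
\]
in particular $x_0\notin\itoverline{G_j}\subset B$, each $G_j$ lives at the single scale $2^{-j}$, and the Wiener series for the $G_j$ is still summable. Since $x_0\in\itoverline{E}\setminus E$, we have $\itoverline{E}\cap A_j\ne\emptyset$, hence $G_j\ne\emptyset$, for infinitely many $j$.

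Next, for each admissible $j$ I let $u_j$ be the Cheeger capacitary potential of $G_j$ in $B$, i.e.\ the lower semicontinuously regularized solution of the obstacle problem for Cheeger \p-harmonic functions in $B$ with obstacle $\chi_{G_j}$ and zero boundary values. Then $u_j$ is Cheeger superharmonic in $B$, Cheeger harmonic in $B\setminus\itoverline{G_j}$, $0\le u_j\le1$, and $\int_B g_{u_j}^p\,d\mu\le C\cp(G_j,B)$. Crucially, because $G_j$ is \emph{open}, the reduced function already equals $1$ everywhere on $G_j$ and is not lowered there by the regularization, so $u_j\equiv1$ on all of $G_j$, not merely quasieverywhere on $\itoverline{E}\cap A_j$; this is exactly why it pays to thicken $\itoverline{E}\cap A_j$ to an open set, and it disposes of any capacity-zero exceptional set. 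The one estimate that genuinely uses the Cheeger equation is the sharp pointwise bound for the potential at the centre,
\[
   u_j(x_0)\le C\biggl(\frac{\cp(G_j,B)}{\cp(B_{j+1},B)}\biggr)^{1/(p-1)},
\]
the analogue for the Cheeger \p-Laplacian of the classical decay estimate for capacitary potentials (cf.\ \cite{HeKiMa}); combining it with the standard comparisons of variational capacities taken with respect to different balls (cf.\ \cite{BBbook}), the doubling property, the estimate $\cp(B_j,2B_j)\approx\mu(B_j)2^{jp}$, and the choice of $G_j$, gives $u_j(x_0)\le C\delta_j$.

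Since $\sum_j\delta_j<\infty$, I pick $a_j\uparrow\infty$ with $\sum_j a_j\delta_j<\infty$ and set $w=\sum_j a_ju_j$, the sum over all admissible $j$. Each $a_ju_j$ is nonnegative and Cheeger superharmonic, and $w(x_0)\le C\sum_j a_j\delta_j<\infty$, so $w$ is a nonnegative Cheeger superharmonic function in $B$ (a sum of nonnegative Cheeger superharmonic functions that is not identically $+\infty$); as $w\not\equiv0$, the strong minimum principle gives $w>0$ in $B$. If $x\in E$ and $d(x,x_0)$ is small, then $x\in\itoverline{E}\cap A_{j(x)}\subset G_{j(x)}$ for an index $j(x)\to\infty$ as $x\to x_0$, so $w(x)\ge a_{j(x)}u_{j(x)}(x)=a_{j(x)}\to\infty$, that is, $\liminf_{E\ni x\to x_0}w(x)=\infty$. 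Putting $u=\min\{w,\,w(x_0)+1\}$, the function $u$ is Cheeger superharmonic in the open neighbourhood $B$ of $x_0$, satisfies $0<u\le w(x_0)+1$ (so it is bounded and positive), has $u(x_0)=w(x_0)$, and equals $w(x_0)+1$ near $x_0$ along $E$; hence $u(x_0)=w(x_0)<w(x_0)+1=\liminf_{E\ni x\to x_0}u(x)$, as required.

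The main obstacle is the centred estimate displayed above, with the exponent $1/(p-1)$. A bound with a weaker exponent such as $1/p$ --- which is essentially all one extracts from the weak Harnack inequality together with an energy estimate, without an equation --- cannot in general be summed against the thinness series when $p>2$, and this is precisely why the upper-gradient machinery of \cite{BBL1} only produced the weak Cartan property. Passing to the Cheeger differentiable structure makes a quasilinear equation available for the potentials $u_j$, and with it the sharp decay of capacitary potentials measured by relative capacity; the rest of the argument is the assembly of standard facts (Wiener-type reformulation of thinness, outer regularity of capacity, behaviour of capacitary potentials of open sets, and stability of Cheeger superharmonicity under sums and truncation).
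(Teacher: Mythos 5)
Your overall architecture --- weighted sums of capacitary potentials of annular pieces, followed by truncation --- is the classical route (Hedberg--Wolff, and the linear theory) and is genuinely different from the paper's proof, which takes a \emph{single} potential $u_k$ of $E\cap B_k$ in $B_{k-1}$ and shows $u_k(x_0)<1$ for one large $k$. But as written your argument has concrete gaps. First, the reduction step is wrong: you define $\delta_j$ using $\itoverline{E}\cap A_j$ and claim thinness of $E$ is equivalent to $\sum_j\delta_j<\infty$. Capacity is outer regular but not stable under closures. Take $E=B\cap\Q^n\subset\R^n$ with $1<p\le n$ and $x_0\in B$ irrational: then $\Cp(E)=0$, so $E$ is thin at $x_0$ and every $\cp(E\cap B_j,2B_j)=0$, yet $\itoverline{E}\cap A_j=A_j$ has full capacity, so your $\delta_j\simeq 1$ and $\sum_j\delta_j=\infty$; the sets $G_j$ you then build are far too large. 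The enlargement to open sets must be applied to $E$ itself (this is exactly what the paper's citation of Lemma~4.7 in \cite{BBL1} accomplishes), not to its closure.

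The more serious gap is the displayed estimate $u_j(x_0)\le C\bigl(\cp(G_j,B)/\cp(B_{j+1},B)\bigr)^{1/(p-1)}$, which you correctly identify as the only place the equation enters but then quote as a known fact. It is not available off the shelf in the metric setting, and it does not follow from the tools that are available. The Wolff-potential bound (Proposition~4.10 in \cite{BMS}, which is what the paper uses) applied to $u_j$ produces, besides the sum of terms $\bigl(\nu_j(B_i)/\cp(B_i,2B_i)\bigr)^{1/(p-1)}$ over the scales $i\le j+1$ (which in a general doubling Poincar\'e space need not be dominated by the single term at scale $j$), an unavoidable first term $C\bigl(\vint u_j^p\,d\mu\bigr)^{1/p}$, which via the Sobolev inequality is only controlled by a capacity ratio to the power $1/p$ --- precisely the exponent your own last paragraph explains is fatal for summing $\sum_j a_ju_j(x_0)$. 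Removing that term for a set in a single annulus is essentially as hard as the paper's key step, namely the estimate $\nu_k(B_j)\le\capp(E_j,B_{j-1})$ obtained there by testing with the auxiliary potentials $v_j$ of $\nu_k|_{B_j}$; the paper's design (one potential, one scale $k$) means it only needs the $1/p$-term to be \emph{small for one large $k$}, never summable over $j$. Finally, your ``standard comparisons of variational capacities taken with respect to different balls'' do not hold in general: $\cp(B_{j+1},B)$ for a fixed large $B$ can be much smaller than $\cp(B_{j+1},B_j)$ (e.g.\ for $p=n$ in $\R^n$ it decays like $j^{1-n}$ while the latter stays bounded below), so even granting the sharp estimate, the conclusion $u_j(x_0)\le C\delta_j$ does not follow. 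These three points would all need genuine proofs before the construction closes.
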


For a Newtonian function, the minimal
\p-weak upper gradient  and the modulus of the Cheeger
gradient are comparable.
Thus the corresponding capacities are comparable to each other,
and the fine topology, as well as thinness (and thickness),
is the same in both cases.
Superminimizers, superharmonic and \p-harmonic functions
are however different.
Hence, using the Cheeger structure we can study
thinness and the fine topology, but not e.g.\
the superharmonic and \p-harmonic functions based on upper gradients.
Only Cheeger \p-(super)harmonic functions can be treated.

We use the Cartan property
to establish the following important Choquet property.

\begin{thm} \label{thm-choquet-intro}
\textup{(Choquet property)}
For any $E\subset X$ and any $\eps >0$ there is an open set $G$
containing all the points in $X$ at which $E$ is thin,
such that $\Cp(E \cap G)< \eps$.
\end{thm}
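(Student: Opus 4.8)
The plan is to replace the set $T$ of points at which $E$ is thin --- which in general is only finely open, not open --- by a sublevel set $\{u<1\}$ of the $\Cp$-capacitary potential $u$ of $E$, to invoke the Cartan property (Theorem~\ref{thm-cartan}) to make this replacement legitimate, and finally to use quasicontinuity of Newtonian functions in order to thicken $\{u<1\}$ to a genuinely open set without absorbing much of $E$. First I would reduce to the case of a bounded set. Cover $X$ by countably many balls $B_j$; it then suffices to prove the following: \emph{for every bounded $E'\subset X$ and every $\eta>0$ there is an open set $G'$ containing all points at which $E'$ is thin, with $\Cp(E'\cap G')<\eta$.} Granting this, apply it to $E'=E\cap B_j$ with $\eta=\eps 2^{-j}$ and put $G:=\bigcup_j(G_j\cap B_j)$: then $G$ is open, it contains every point at which $E$ is thin (such a point lies in some $B_j$, and $E\cap B_j\subset E$ is thin there too), and $\Cp(E\cap G)\le\sum_j\Cp((E\cap B_j)\cap G_j)<\eps$ by the countable subadditivity of $\Cp$.

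So let $E'$ be bounded, fix a ball $B\supset\itoverline{E'}$, and let $u$ be the $\Cp$-capacitary potential of $E'$ in $B$ --- the lower semicontinuous regularization of the reduced function of $E'$ relative to $B$, equivalently the solution of the obstacle problem in $B$ with obstacle $\chi_{E'}$ and zero boundary data --- extended by $0$ outside $B$. Then $u\in\Np(X)$, $0\le u\le 1$, and $u=1$ q.e.\ on $E'$, so that $E'\cap\{u<1\}$ has $\Cp$-capacity zero. The crucial claim is that \emph{if $E'$ is thin at a point $x$, then $u(x)<1$.} For $x\notin E'$ this is the characterization of thinness by capacitary potentials, which follows from Theorem~\ref{thm-cartan} together with the standard localization of reduced functions. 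For $x\in E'$ one necessarily has $\Cp(\{x\})=0$ (otherwise, since $E'$ clusters at $x$, the Wiener-type series defining thinness would diverge), so $E'$ and $E'\setminus\{x\}$ have the same capacitary potential and the same thinness at $x$, and the case $x\notin E'$ applied to $E'\setminus\{x\}$ gives $u(x)<1$. Hence the set $T'$ of all points at which $E'$ is thin satisfies $T'\subset\{u<1\}$.

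Now I would pass from $\{u<1\}$ to an open set. Since $u\in\Np(X)$ is quasicontinuous, for each $n\ge 1$ there is an open set $\mathcal O_n$ with $\Cp(\mathcal O_n)<\eta 2^{-n}$ such that $u|_{X\setminus\mathcal O_n}$ is continuous. Then $\{u<1-1/n\}\setminus\mathcal O_n$ is relatively open in $X\setminus\mathcal O_n$, hence of the form $V_n\setminus\mathcal O_n$ with $V_n$ open, and $V_n\subset\{u<1-1/n\}\cup\mathcal O_n$. Therefore
\[
T'\ \subset\ \{u<1\}\ =\ \bigcup_{n\ge 1}\Bigl\{u<1-\tfrac1n\Bigr\}\ \subset\ \bigcup_{n\ge 1}(V_n\cup\mathcal O_n)\ =:\ G',
\]
which is open. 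Finally $E'\cap V_n\subset(E'\cap\{u<1\})\cup\mathcal O_n$, so
\[
\Cp(E'\cap G')\ \le\ \Cp\Bigl(E'\cap\bigcup_n V_n\Bigr)+\Cp\Bigl(\bigcup_n\mathcal O_n\Bigr)\ \le\ \Cp(E'\cap\{u<1\})+2\sum_{n\ge 1}\eta 2^{-n}\ =\ 2\eta ,
\]
and running the whole argument with $\eta/2$ in place of $\eta$ yields the bounded-set statement, hence the theorem.

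The main obstacle is the inclusion $T'\subset\{u<1\}$, that is, the implication ``$E'$ thin at $x$ $\Rightarrow$ the capacitary potential of $E'$ is strictly below $1$ at $x$''. Classically this is Cartan's theorem, comparable in difficulty to the Wiener criterion, and here it is exactly what Theorem~\ref{thm-cartan} supplies; what remains is only to transcribe it into a statement about $u$ via the localization and subadditivity properties of reduced functions (plus the easy observation that thin points of $E'$ that lie in $E'$ have zero capacity). Once this input is available, the passage from the $F_\sigma$-set $\{u<1\}$ to an open set --- using quasicontinuity of Newtonian functions, the decomposition $\{u<1\}=\bigcup_n\{u<1-1/n\}$, and the $\eps 2^{-n}$ bookkeeping --- together with the reduction to bounded sets is routine.
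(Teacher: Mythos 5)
Your overall strategy (capacitary potentials, the Cartan property, quasicontinuity to pass to an open set) is the same as the paper's, but there is a genuine gap at the step you yourself single out as crucial: the inclusion $T'\subset\{u<1\}$, where $u$ is the capacitary potential of the \emph{whole} bounded set $E'$ in one fixed ball $B\supset\itoverline{E'}$. This inclusion is false. Take $E'=\{y\in\R^n:\tfrac12\le|y|\le1\}$ and $x=0$: then $E'$ is (trivially) thin at $0$, but the lsc-regularized capacitary potential $u$ of $E'$ in $B(0,2)$ equals $1$ a.e.\ on the annulus, hence $u\ge1$ everywhere on $\partial B(0,\tfrac12)$, and comparison with the constant function $1$ forces $u\equiv1$ on $B(0,\tfrac12)$, so $u(0)=1$. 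Thinness at $x$ controls only the potentials of the localized sets $E'\cap B(x,r)$ for small $r$, not the potential of $E'$ itself, and the appeal to the ``standard localization of reduced functions'' cannot repair this after the fact, because your open set $G'$ is manufactured from the single function $u$. The same problem survives your reduction to $E\cap B_j$: whatever the radius of $B_j$, a scaled copy of the annulus can sit inside it.

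The paper's proof is organized precisely so as to avoid this. It covers $X$ by countably many balls $B_j$ such that \emph{every point is covered by arbitrarily small balls}, forms for each $j$ the potential $u_j$ of $E\cap B_j$ in $2B_j$, and builds $G$ from all the sets $\{x\in B_j:u_j(x)<1\}$ (fattened to open sets by quasicontinuity, as you do). Given a point $z$ at which $E$ is thin, the Cartan property applied to $E\setminus\{z\}$ produces a superharmonic $v$ with $v(z)<1\le v$ on $E$ only in some possibly very small neighbourhood of $z$; one then chooses $B_j\ni z$ small enough that $v$ is superharmonic in $3B_j$ and $v\ge1$ on $B_j\cap E$, and the obstacle-problem comparison principle gives $u_j\le v$ on $2B_j$, hence $u_j(z)<1$. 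So the scale of the potential must be adapted to each thin point; one potential per bounded piece does not suffice. (Your treatment of the remaining points --- thin points lying in $E$ itself, where $\Cp(\{x\})=0$, and the $\eps 2^{-n}$ bookkeeping via quasicontinuity and countable subadditivity --- is fine and matches the paper.)
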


The Choquet property was first established by Choquet~\cite{Choquet1959} in
1959. In the nonlinear theory on unweighted $\R^n$  it was later
established by Hedberg~\cite{Hedb} and  Hedberg--Wolff~\cite{HedWol} in
connections with potentials (also for higher-order Sobolev spaces).
The Cartan property for \p-superharmonic functions on unweighted
$\R^n$ was obtained by Kilpel\"ainen--Mal\'y~\cite{KiMa}
as a consequence of their pointwise Wolff-potential estimates.
In fact, Kilpel\"ainen and Mal\'y used the Cartan property to
establish the necessity in the Wiener criterion.
In Mal\'y--Ziemer~\cite{MZ}, the authors
deduce the Choquet property from the Cartan property.
The proof of the Cartan property was extended to weighted $\R^n$ by
Mikkonen~\cite[Theorem~5.8]{Mikkonen} and can also be found
 in Heinonen--Kilpel\"ainen--Martio~\cite[Theorem~21.26 (which is only in
the second edition)]{HeKiMa}; in both places
they however refrained from deducing consequences such as the Choquet
property.

Our proof of the Choquet property follows the one in \cite{MZ}, but we have some extra complications due to the fact that we
can simultaneously have some points with zero capacity and others with positive
capacity. Note that Fuglede~\cite{Fugl71} contains a proof of the Choquet property,
in an axiomatic setting, assuming that
Corollary~\ref{cor-kellogg-intro} and part (a) in
Theorem~\ref{thm-finelyopen-quasiopen} are true.
We have a converse approach, since our proofs of
Corollary~\ref{cor-kellogg-intro} and Theorem~\ref{thm-finelyopen-quasiopen}
are based on the Choquet property.

\begin{cor}\textup{(Fine Kellogg property)}\label{cor-kellogg-intro}
For any $E\subset X$ we have
\begin{equation}   \label{fine-Kellogg-intro}
C_p(\{x \in E: E \text{ is thin at } x\})=0.
\end{equation}
\end{cor}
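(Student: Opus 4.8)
The plan is to obtain the fine Kellogg property immediately from the Choquet property (Theorem~\ref{thm-choquet-intro}) together with the monotonicity of the capacity $\Cp$. Write
\[
   F = \{x \in E : E \text{ is thin at } x\},
\]
so that \eqref{fine-Kellogg-intro} is exactly the assertion $\Cp(F)=0$.

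Fix $\eps>0$. Applying Theorem~\ref{thm-choquet-intro} to the set $E$, we obtain an open set $G\subset X$ which contains \emph{all} the points of $X$ at which $E$ is thin and which satisfies $\Cp(E\cap G)<\eps$. Every point of $F$ is, by definition, a point at which $E$ is thin, and hence $F\subset G$; moreover trivially $F\subset E$. Therefore $F\subset E\cap G$, and the monotonicity of $\Cp$ gives
\[
   \Cp(F)\le\Cp(E\cap G)<\eps.
\]
Letting $\eps\to0$ yields $\Cp(F)=0$, which is \eqref{fine-Kellogg-intro}.

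There is essentially no obstacle at this stage: all the real work lies in the Choquet property, which in turn rests on the Cartan property (Theorem~\ref{thm-cartan}). The only small points worth checking are that Theorem~\ref{thm-choquet-intro} does apply to an arbitrary set $E\subset X$ — which it does — and that thinness is meaningful, and the conclusion of the Choquet property covers, points of $E$ itself, so that indeed every $x\in F$ is among the points contained in $G$; both are clear from the statement of Theorem~\ref{thm-choquet-intro} and Definition~\ref{deff-thinness}. (One could equally well apply the Choquet property with $\eps_j=2^{-j}$ to get open sets $G_j\supset F$ and note $\Cp(F)\le\inf_j\Cp(E\cap G_j)=0$, but taking the infimum over a single family of $\eps$ is enough.)
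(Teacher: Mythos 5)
Your argument is correct and is exactly the paper's proof: the paper also applies the Choquet property to $E$, observes that $E\setminus b_pE$ (which is precisely your set $F$) is contained in $E\cap G$, and lets $\eps\to0$. Nothing further is needed.
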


The fine Kellogg property has close connections with boundary regularity,
see Remark~\ref{twokelloggs}.
The implications $\Rightarrow$ in the following result were already
obtained in Bj\"orn--Bj\"orn--Latvala~\cite{BBL1},
but now we are able to complete the picture.

\begin{thm} \label{thm-finelyopen-quasiopen}
{\rm(a)} A set $U\subset X$ is quasiopen if and only if $U=V \cup E$ for some finely open set $V$ and for a set $E$ of capacity zero.

{\rm(b)}  An extended real-valued function on a quasiopen set
$U$ is quasicontinuous in $U$ if and only if
$u$ is finite q.e.\ and finely continuous q.e.\ in $U$.
\end{thm}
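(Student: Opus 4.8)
The plan is to handle the two converse implications, since the forward directions ($\Rightarrow$) are already available from \cite{BBL1}. Both converses will be deduced from the Choquet property (Theorem~\ref{thm-choquet-intro}), together with the outer regularity of $\Cp$ and the elementary facts that a set of capacity zero is thin at every point and that a finite union of thin sets is thin.

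For part~(a), suppose $U=V\cup E$ with $V$ finely open and $\Cp(E)=0$. I would first note that it suffices to prove that $V$ itself is quasiopen, since attaching a set of zero capacity to a quasiopen set again yields a quasiopen set: given $\eps>0$, cover $E$ by an open set $O_2$ with $\Cp(O_2)<\eps/2$ and add it to an open set $O_1$ (with $\Cp(O_1)<\eps/2$) for which $V\cup O_1$ is open, so that $U\cup(O_1\cup O_2)=(V\cup O_1)\cup O_2$ is open. To prove that $V$ is quasiopen, fix $\eps>0$; since $V$ is finely open, $X\setm V$ is thin at each point of $V$, and the Choquet property applied to the set $X\setm V$ gives an open $G\supset V$ with $\Cp(G\cap(X\setm V))<\eps$. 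By outer regularity of $\Cp$ pick an open $O\supset G\setm V$ with $\Cp(O)<\eps$. Then $G=V\cup(G\setm V)\subset V\cup O\subset G\cup O$, so $V\cup O=G\cup O$, which is open; hence $V$ is quasiopen.

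For part~(b), suppose $u$ is finite q.e.\ and finely continuous q.e.\ in the quasiopen set $U$. Using part~(a) I would write $U=V\cup N_0$ with $V$ finely open and $\Cp(N_0)=0$, then enlarge $N_0$ (keeping zero capacity) so that it also contains all points of $U$ at which $u$ is infinite or fails to be finely continuous, and set $V'=V\setm N_0$. Removing a set of capacity zero from a finely open set leaves it finely open (at each $x\in V'$ the set $(X\setm V)\cup N_0$ is a union of two sets thin at $x$), so $V'$ is finely open and $u|_{V'}$ is real-valued and finely continuous; consequently for each $q\in\Q$ the sets $\{x\in V':u(x)>q\}$ and $\{x\in V':u(x)<q\}$ are finely open, hence quasiopen by part~(a). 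Now fix $\eps>0$, enumerate $\Q$ as $\{q_k\}_{k\ge1}$, and for each $k$ choose an open $H_k$ with $\Cp(H_k)<\eps2^{-k-1}$ such that $\{u>q_k\}\cup H_k$ and $\{u<q_k\}\cup H_k$ are open; also pick an open $O\supset N_0$ with $\Cp(O)<\eps/2$ and set $G=O\cup\bigcup_{k}H_k$, so that $\Cp(G)<\eps$ by countable subadditivity. Then $U\setm G=V'\setm G$, and for each $k$ the set $\{u>q_k\}\cap(U\setm G)$ equals $(\{u>q_k\}\cup H_k)\cap(U\setm G)$ (since $H_k\subset G$), which is relatively open in $U\setm G$; likewise for the sublevel sets. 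Since rational levels suffice to test continuity of a real-valued function, $u|_{U\setm G}$ is finite and continuous, and so $u$ is quasicontinuous in $U$.

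The genuinely deep input, the Choquet property, is already established, so the remaining work is careful bookkeeping rather than new ideas: one must check that sets of capacity zero are thin at every point and that thinness is stable under finite unions (so that fine-openness is preserved when $N_0$ is deleted), that $\Cp$ is an outer capacity in the present setting, and that ``finely continuous q.e.\ in $U$'' transfers to $u|_{V'}$ even though the quasiopen set $U$ need not be metrically open. I expect this last point --- disentangling the fine topology relative to the non-open set $U$ from the ambient fine topology --- to be the most delicate thing to phrase precisely, while the countable-subadditivity estimate controlling $\Cp(G)$ and the unravelling of Definition~\ref{deff-thinness} are routine.
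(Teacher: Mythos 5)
Your proposal is correct and follows essentially the same route as the paper: the converse of (a) is obtained by applying the Choquet property to $X\setm V$ together with outer regularity of $\Cp$, and the converse of (b) reduces to (a) applied to the finely open rational level sets of $u$ on a finely open subset of $U$ differing from it by a set of capacity zero. The minor cosmetic differences (splitting off $E$ first in (a), using rational rays instead of rational intervals, and omitting the auxiliary open set making $U\cup G_U$ open) do not affect the argument.
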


It is pointed out in Adams--Lewis~\cite[Proposition~3]{AdLew} that (a) for unweighted $\R^n$ follows from the Choquet property established in Hedberg--Wolff~\cite{HedWol}.
Also (b) then follows by modifying the earlier axiomatic argumentation of
Fuglede~\cite[Lemma, p.\ 143]{Fugl71}. The proof of Theorem~\ref{thm-finelyopen-quasiopen} in unweighted  $\R^n$ is given in Mal\'y--Ziemer~\cite[p.~146]{MZ}.
For the reader's convenience, we include the proof of
Theorem~\ref{thm-finelyopen-quasiopen} although the proof essentially follows \cite{MZ}.
In Section~\ref{sect-finely-quasi}, we use Theorem~\ref{thm-finelyopen-quasiopen}
to extend and simplify some recent results from
Bj\"orn--Bj\"orn~\cite{BBnonopen}.

We end the paper with another application of the Cartan property
in Section~\ref{sect-supp-cap-meas}, which
contains
results on capacitary measures related to Cheeger
supersolutions, see
Theorem~\ref{thm-bootstrap} and 
Corollaries~\ref{cor-bootstrap} and \ref{cor-bootstrap2}.
In particular, we show that the capacitary measure only charges the fine boundary
of the corresponding set.

\begin{ack}
The  first two authors were supported by the Swedish Research Council.
Part of this research was done during several visits
of the third author to Link\"opings universitet in
2012--2014 and
while all three authors
visited Institut Mittag-Leffler in the autumn of 2013.
We thank both institutions for their hospitality
and support.
\end{ack}

\section{Notation and preliminaries}
\label{sect-prelim}

We assume throughout the paper that $1 < p<\infty$
and that $X=(X,d,\mu)$ is a metric space equipped
with a metric $d$ and a positive complete  Borel  measure $\mu$
such that $0<\mu(B)<\infty$ for all (open) balls $B \subset X$.
It follows that $X$ is separable.
The $\sigma$-algebra on which $\mu$ is defined
is obtained by the completion of the Borel $\sigma$-algebra.
We also assume that $\Om \subset X$ is a nonempty open
set.

We say that $\mu$  is \emph{doubling} if
there exists a \emph{doubling constant} $C>0$ such that for all balls
$B=B(x_0,r):=\{x\in X: d(x,x_0)<r\}$ in~$X$,
\begin{equation*}
        0 < \mu(2B) \le C \mu(B) < \infty.
\end{equation*}
Here and elsewhere we let $\delta B=B(x_0,\delta r)$.
A metric space with a doubling measure is proper\/
\textup{(}i.e.\ closed and bounded subsets are compact\/\textup{)}
if and only if it is complete.
See Heinonen~\cite{heinonen} for more on doubling measures.

A \emph{curve} is a continuous mapping from an interval,
and a \emph{rectifiable} curve is a curve with finite length.
We will only consider curves which are nonconstant, compact and rectifiable.
A curve can thus be parameterized by its arc length $ds$.
We follow Heinonen and Koskela~\cite{HeKo98} in introducing
upper gradients as follows (they called them very weak gradients).

\begin{deff} \label{deff-ug}
A nonnegative Borel function $g$ on $X$ is an \emph{upper gradient}
of an extended real-valued function $f$
on $X$ if for all nonconstant, compact and rectifiable curves
$\gamma: [0,l_{\gamma}] \to X$,
\begin{equation} \label{ug-cond}
        |f(\gamma(0)) - f(\gamma(l_{\gamma}))| \le \int_{\gamma} g\,ds,
\end{equation}
where we follow the convention that the left-hand side is $\infty$
whenever at least one of the
terms therein is infinite.
If $g$ is a nonnegative measurable function on $X$
and if (\ref{ug-cond}) holds for \p-almost every curve (see below),
then $g$ is a \emph{\p-weak upper gradient} of~$f$.
\end{deff}

Here we say that a property holds for \emph{\p-almost every curve}
if it fails only for a curve family $\Ga$ with zero \p-modulus,
i.e.\ there exists $0\le\rho\in L^p(X)$ such that
$\int_\ga \rho\,ds=\infty$ for every curve $\ga\in\Ga$.
Note that a \p-weak upper gradient need not be a Borel function,
it is only required to be measurable.
On the other hand,
every measurable function $g$ can be modified on a set of measure zero
to obtain a Borel function, from which it follows that
$\int_{\gamma} g\,ds$ is defined (with a value in $[0,\infty]$) for
\p-almost every curve $\ga$.
For proofs of these and all other facts in this section
we refer to Bj\"orn--Bj\"orn~\cite{BBbook} and
Heinonen--Koskela--Shanmugalingam--Tyson~\cite{HKST}.

The \p-weak upper gradients were introduced in
Koskela--MacManus~\cite{KoMc}. It was also shown there
that if $g \in \Lploc(X)$ is a \p-weak upper gradient of $f$,
then one can find a sequence $\{g_j\}_{j=1}^\infty$
of upper gradients of $f$ such that $g_j-g \to 0$ in $L^p(X)$.
If $f$ has an upper gradient in $\Lploc(X)$, then
it has a \emph{minimal \p-weak upper gradient} $g_f \in \Lploc(X)$
in the sense that for every \p-weak upper gradient $g \in \Lploc(X)$ of $f$ we have
$g_f \le g$ a.e., see Shan\-mu\-ga\-lin\-gam~\cite{Sh-harm}.
The minimal \p-weak upper gradient is well defined
up to a set of measure zero in the cone of nonnegative functions in $\Lploc(X)$.
Following Shanmugalingam~\cite{Sh-rev},
we define a version of Sobolev spaces on the metric measure space $X$.

\begin{deff} \label{deff-Np}
Let for measurable $f$,
\[
        \|f\|_{\Np(X)} = \biggl( \int_X |f|^p \, \dmu
                + \inf_g  \int_X g^p \, \dmu \biggr)^{1/p},
\]
where the infimum is taken over all upper gradients of $f$.
The \emph{Newtonian space} on $X$ is
\[
        \Np (X) = \{f: \|f\|_{\Np(X)} <\infty \}.
\]
\end{deff}
\medskip

The space $\Np(X)/{\sim}$, where  $f \sim h$ if and only if $\|f-h\|_{\Np(X)}=0$,
is a Banach space and a lattice, see Shan\-mu\-ga\-lin\-gam~\cite{Sh-rev}.
In this paper we assume that functions in $\Np(X)$
are defined everywhere (with values in $\eR:=[-\infty,\infty]$),
not just up to an equivalence class in the corresponding function space.
For a measurable set $E\subset X$, the Newtonian space $\Np(E)$ is defined by
considering $(E,d|_E,\mu|_E)$ as a metric space on its own.
We say  that $f \in \Nploc(E)$ if
for every $x \in E$ there exists a ball $B_x\ni x$ such that
$f \in \Np(B_x \cap E)$.
If $f,h \in \Nploc(X)$,
then $g_f=g_h$ a.e.\ in $\{x \in X : f(x)=h(x)\}$,
in particular $g_{\min\{f,c\}}=g_f \chi_{\{f < c\}}$ for $c \in \R$.

\begin{deff} \label{deff-sobcap}
The \emph{Sobolev capacity} of an arbitrary set $E\subset X$ is
\[
\Cp(E) = \inf_u\|u\|_{\Np(X)}^p,
\]
where the infimum is taken over all $u \in \Np(X)$ such that
$u\geq 1$ on $E$.
\end{deff}

The Sobolev capacity is countably subadditive.
We say that a property holds \emph{quasieverywhere} (q.e.)\
if the set of points  for which the property does not hold
has Sobolev capacity zero.
The Sobolev capacity is the correct gauge
for distinguishing between two Newtonian functions.
If $u \in \Np(X)$, then $u \sim v$ if and only if $u=v$ q.e.
Moreover, Corollary~3.3 in Shan\-mu\-ga\-lin\-gam~\cite{Sh-rev} shows that
if $u,v \in \Np(X)$ and $u= v$ a.e., then $u=v$ q.e.

A set $U\subset X$ is \emph{quasiopen} if for every
$\varepsilon>0$ there is an open set $G\subset X$ such that $\Cp(G)<\varepsilon$
and $G\cup U$ is open.
A function $u$ defined on a set $E\subset X$ is \emph{quasicontinuous}
if for every $\varepsilon>0$ there is an open set $G\subset X$ such that $\Cp(G)<\varepsilon$ and $u|_{E \setm G}$ is finite and continuous.
If $u$ is quasicontinuous on a quasiopen set $U$, then
it is easily verified that $\{x \in U: u(x) <a\}$ and $\{x \in U: u(x) >a\}$ are
quasiopen for all $a \in \R$, cf.\ Proposition~3.3 in
Bj\"orn--Bj\"orn--Mal\'y~\cite{BBM}.

\begin{deff} \label{def.PI.}
We say that $X$ supports a \emph{\p-Poincar\'e inequality} if
there exist constants $C>0$ and $\lambda \ge 1$
such that for all balls $B \subset X$,
all integrable functions $f$ on $X$ and all upper gradients $g$ of $f$,
\begin{equation} \label{PI-ineq}
        \vint_{B} |f-f_B| \,\dmu
        \le C \diam(B) \biggl( \vint_{\lambda B} g^{p} \,\dmu \biggr)^{1/p},
\end{equation}
where $ f_B
 :=\vint_B f \,\dmu
:= \int_B f\, d\mu/\mu(B)$.
\end{deff}

In the definition of Poincar\'e inequality we can equivalently assume
that $g$ is a \p-weak upper gradient.

In $\R^n$ equipped with a doubling measure $d\mu=w\,dx$, where
$dx$ denotes Lebesgue measure, the \p-Poincar\'e inequality~\eqref{PI-ineq}
is equivalent to the \emph{\p-admissibility} of the weight $w$ in the
sense of Heinonen--Kilpel\"ainen--Martio~\cite{HeKiMa}, cf.\
Corollary~20.9 in~\cite{HeKiMa}
and Proposition~A.17 in~\cite{BBbook}.

If $X$ is complete and  supports a \p-Poincar\'e inequality
and $\mu$ is doubling, then Lipschitz functions
are dense in $\Np(X)$, see Shan\-mu\-ga\-lin\-gam~\cite{Sh-rev}.
Moreover, all functions in $\Np(X)$
and those in $\Np(\Om)$ are quasicontinuous,
see Bj\"orn--Bj\"orn--Shan\-mu\-ga\-lin\-gam~\cite{BBS5}.
This means that in the Euclidean setting, $\Np(\R^n)$ is the
refined Sobolev space as defined in
Heinonen--Kilpel\"ainen--Martio~\cite[p.~96]{HeKiMa},
see Bj\"orn--Bj\"orn~\cite[Appendix~A.2]{BBbook}
for a proof of this fact valid in weighted $\R^n$.
This is the main reason why, unlike in the classical Euclidean setting,
we do not need to
require the functions competing in the
definitions of capacity to be $1$ in a
neighbourhood of $E$.
For recent related progress on the density of Lipschitz functions
see Ambrosio--Colombo--Di Marino~\cite{AmbCD} and
Ambrosio--Gigli--Savar\'e~\cite{AmbGS}.

In Section~\ref{sect-fine-cont} the fine topology is
defined by means of thin sets, which in turn use the
variational capacity $\cp$.
To be able to define the variational capacity we first
need a Newtonian space with zero boundary values.
We let, for an arbitrary set $A \subset X$,
\[
\Np_0(A)=\{f|_{A} : f \in \Np(X) \text{ and }
        f=0 \text{ on } X \setm A\}.
\]
One can replace the assumption ``$f=0$ on $X \setm A$''
with ``$f=0$ q.e.\ on $X \setm A$''
without changing the obtained space
$\Np_0(A)$.
Functions from $\Np_0(A)$ can be extended by zero in $X\setm A$ and we
will regard them in that sense if needed.

\begin{deff}
The \emph{variational
capacity} of $E\subset \Om$ with respect to $\Om$ is
\[
\cp(E,\Om) = \inf_u\int_X g_u^p\, d\mu,
\]
where the infimum is taken over all $u \in \Np_0(\Om)$
such that
$u\geq 1$ on $E$.
\end{deff}

If $\Cp(E)=0$, then $\cp(E,\Om)=0$. The converse implication is true if $\mu$ is doubling and supports a \p-Poincar\'e inequality.

In Section~\ref{sect-supp-cap-meas} we will need the following
simple lemma.
For the reader's convenience we provide the short proof.

\begin{lem} \label{lem-product}
If $u,v \in \Np(X)$ are bounded,
then $uv \in \Np(X)$.
\end{lem}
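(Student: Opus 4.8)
The plan is to prove that the product of two bounded Newtonian functions is again Newtonian by combining the product rule for upper gradients with the $L^\infty$ bounds. First I would recall the standard Leibniz-type estimate: if $g_u$ and $g_v$ are ($p$-weak) upper gradients of $u$ and $v$ respectively, then $|v|\,g_u + |u|\,g_v$ is a $p$-weak upper gradient of the product $uv$. This is a routine consequence of the upper gradient inequality applied along curves together with the identity $u(\ga(l))v(\ga(l)) - u(\ga(0))v(\ga(0))$ telescoped as $u(\ga(l))(v(\ga(l))-v(\ga(0))) + v(\ga(0))(u(\ga(l))-u(\ga(0)))$, combined with the fact that along $p$-a.e.\ curve $u$ and $v$ are absolutely continuous and bounded; alternatively one cites the product rule for Newtonian (or Sobolev) functions which is available in the references \cite{BBbook} and \cite{HKST} the paper already points to.

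Next I would verify the two required finiteness conditions. Writing $M_u = \|u\|_{L^\infty(X)}$ and $M_v = \|v\|_{L^\infty(X)}$, we have pointwise $|uv| \le M_u M_v$, but this alone does not give $uv \in L^p(X)$ unless $\mu(X) < \infty$; instead I would estimate $|uv|^p \le M_v^p |u|^p$ (say), so that $\int_X |uv|^p\,d\mu \le M_v^p \int_X |u|^p\,d\mu < \infty$ since $u \in \Np(X) \subset L^p(X)$. For the gradient part, using the upper gradient $g := M_v\,g_u + M_u\,g_v$ (which dominates $|v|g_u+|u|g_v$ a.e.\ and hence is also a $p$-weak upper gradient of $uv$), we get $\int_X g^p\,d\mu \le 2^{p-1}\bigl(M_v^p \int_X g_u^p\,d\mu + M_u^p \int_X g_v^p\,d\mu\bigr) < \infty$, because $g_u, g_v \in L^p(X)$. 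Since $uv$ is measurable and possesses a $p$-weak upper gradient in $L^p(X)$, by the density/approximation result of Koskela--MacManus recalled in the excerpt it has an upper gradient arbitrarily close in $L^p$, hence $\|uv\|_{\Np(X)} < \infty$, i.e.\ $uv \in \Np(X)$.

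I do not expect a genuine obstacle here; the only mildly delicate point is making sure the product rule for upper gradients is invoked in a form valid for $p$-weak upper gradients of everywhere-defined bounded functions (as opposed to only for Lipschitz or bounded continuous functions), and that passing from a $p$-weak upper gradient in $\Lploc$ to the conclusion $uv \in \Np$ is justified — but both are covered by the basic Newtonian-space machinery in \cite{BBbook}. The measurability of $uv$ is immediate since $u$ and $v$ are measurable. Thus the proof is essentially a two-line citation of the product rule followed by the two elementary $L^p$-estimates above.
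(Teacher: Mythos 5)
Your proposal is correct and follows essentially the same route as the paper: the paper normalizes $|u|,|v|\le 1$, invokes the Leibniz rule (Theorem~2.15 in \cite{BBbook}) to get $|u|g_v+|v|g_u$ as a $p$-weak upper gradient of $uv$, and concludes exactly as you do from the $L^p$ bounds. The normalization versus carrying $M_u,M_v$ explicitly, and your extra remark on passing from a $p$-weak upper gradient to membership in $\Np(X)$, are only cosmetic differences.
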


\begin{proof} We can assume that $|u|$ and $|v|$ are bounded by $1$.
Then $|uv| \le |u|$ and hence $uv \in L^p(X)$.
By the Leibniz rule (Theorem~2.15 in Bj\"orn--Bj\"orn~\cite{BBbook}),
$    g:=|u|g_v + |v|g_u$ is a \p-weak upper gradient of $uv$.
As $g \le g_v +g_u \in L^p(X)$ we see that $uv \in \Np(X)$.
\end{proof}

Throughout the paper, the letter $C$ will denote various positive
constants whose values may vary even within a line.
We also write $A\simeq B$ if $C^{-1}A \le B \le C A$.

\section{Cheeger gradients}

\emph{Throughout the rest of the paper, we assume that
$X$ is complete and  supports a \p-Poincar\'e inequality, and that
$\mu$ is doubling.}

\medskip

In addition to upper gradients we will also use Cheeger gradients in this paper.
Their existence is based on the following deep
result of Cheeger.

\begin{thm} \label{Cheeger-thm}
\textup{(Theorem~4.38 in Cheeger~\cite{Cheeg})}
There exists $N$ and a countable collection\/ $(U_{\al}, X^{\al})$
of pairwise disjoint measurable sets
$U_{\al}$ and Lipschitz ``coordinate'' functions
$X^{\al}\colon  X \to \R^{k(\al)}$\textup{,}
$1\le k(\al) \le N$\textup{,} such that
$\mu\bigl( X\setminus \bigcup_{\al} U_{\al} \bigr) = 0$ and for every
Lipschitz function
$f\colon X\to \R$ there exist unique bounded vector-valued functions
$d ^{\al} f\colon U_{\al} \to \R^{k(\al)}$ such that for
a.e.\ $x\in U_{\al}$\textup{,}
\begin{equation*}
        \lim_{r\to0} \sup_{y\in B(x,r)}
               \frac{|f(y)-f(x) -  d^{\al} f(x) \cdot
                (X^{\al}(y) - X^{\al}(x))|}{r} =0,
\end{equation*}
where\/ $\cdot$ denotes the usual inner
product in\/ $\R^{k(\al)}$.
\end{thm}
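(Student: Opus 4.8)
This is Cheeger's theorem, quoted verbatim from \cite[Theorem~4.38]{Cheeg}; it is a deep structural result and in the present paper it is used as a black box rather than reproved. For completeness, here is the shape of the argument one would follow — namely Cheeger's strategy. The plan is to produce a \emph{measurable differentiable structure} via a generalized Rademacher theorem, working locally and using only the standing hypotheses that $\mu$ is doubling and that $X$ supports a \p-Poincar\'e inequality. The key analytic input is that, for a Lipschitz $f$, the Poincar\'e inequality (combined with doubling through a telescoping / Lebesgue-differentiation argument) controls the oscillation of $f$ on small balls by an $L^p$-average of an upper gradient; in particular the pointwise Lipschitz constant $\Lip f(x)$ is finite a.e., and one obtains quantitative infinitesimal control of $f$ near a.e.\ point.

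Granting this, the coordinate patches $(U_\alpha,X^\alpha)$ are built greedily: one repeatedly selects finitely many Lipschitz functions $X^\alpha=(X^\alpha_1,\dots,X^\alpha_{k(\alpha)})$ that are ``measure-theoretically independent'' on a set $U_\alpha$ of positive measure — no nontrivial linear combination of the $X^\alpha_i$ having pointwise Lipschitz constant vanishing on a positive-measure subset of $U_\alpha$ — taking $k(\alpha)$ and $U_\alpha$ maximal at each step, until $X$ is exhausted up to a $\mu$-null set. On $U_\alpha$ one then shows that every Lipschitz $f$ is infinitesimally a linear function of $X^\alpha$ at a.e.\ $x$: the candidate $d^\alpha f(x)$ is the a.e.-defined coefficient vector extracted from blow-ups of $f$ at $x$, and the asserted limit
\[
\lim_{r\to0}\sup_{y\in B(x,r)}\frac{|f(y)-f(x)-d^\alpha f(x)\cdot(X^\alpha(y)-X^\alpha(x))|}{r}=0
\]
follows by applying the infinitesimal control above to the error $f-d^\alpha f(x)\cdot X^\alpha$, whose pointwise Lipschitz constant at $x$ must vanish by the maximality in the construction of $U_\alpha$; uniqueness of $d^\alpha f$ is again the independence of the $X^\alpha_i$.

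The main obstacle — and the reason the theorem is genuinely hard — is the \emph{uniform bound} $k(\alpha)\le N$, with $N$ depending only on the doubling and Poincar\'e constants. This is precisely where doubling is essential: too many measure-theoretically independent infinitesimal directions at a point would permit the construction, near that point, of a family of balls whose measures grow faster than doubling allows, a contradiction. Proving this dimension bound, together with the measurability of the sets $U_\alpha$ and of the maps $f\mapsto d^\alpha f$, is the technical heart of the argument; once the charts exist, the differentiability estimate itself is comparatively soft and parallels the classical proof that Lipschitz functions on $\R^n$ are differentiable a.e.
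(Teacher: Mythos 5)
You are right that the paper offers no proof of this statement: it is imported wholesale as Theorem~4.38 of Cheeger~\cite{Cheeg} and used as a black box, so there is nothing in the paper to compare your sketch against. Your outline of Cheeger's strategy (a generalized Rademacher theorem driven by the doubling and Poincar\'e hypotheses, a maximal choice of measure-theoretically independent Lipschitz coordinates, and the dimension bound $k(\al)\le N$ forced by doubling) is an accurate high-level account of the cited proof.
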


Cheeger further shows that for a.e.\ $x\in U_{\al}$,
there is an inner product norm $|\cdot|_x$ on
$\R^{k(\al)}$ such that for all Lipschitz $f$,
\begin{equation}
\frac{1}{C}g_f(x) \le |d^{\al} f(x)|_x \le C g_f(x),
       \label{df-comp-gf}
\end{equation}
where $C$ is independent of $f$ and $x$, see p.\ 460 in
Cheeger~\cite{Cheeg}.
As Lipschitz functions are dense in $\Np(X)$,
the ``gradients'' $d^\al f$
extend uniquely  to the whole  $\Np(X)$,
by
Franchi--Haj\l asz--Koskela~\cite[Theorem~10]{FHK} or Keith~\cite{Ke-meas}.
Moreover, \eqref{df-comp-gf} holds even for functions in $\Np(X)$.

From now on we drop $\alp$ and set
\[
     D f := d^{\alp} f \quad \text{in } U_\alp.
\]

On a metric space there is some freedom in choosing the Cheeger
structure.
On $\R^n$ we will however always make the natural choice
$Df = \nabla f$ and let the inner product norm in \eqref{df-comp-gf}
be the Euclidean norm. Here $\nabla f$ denotes the Sobolev gradient from
Hei\-no\-nen--Kil\-pe\-l\"ai\-nen--Martio~\cite{HeKiMa},
which equals the distributional gradient if the weight on $\R^n$
is a Muckenhoupt $A_p$ weight. In this case, $|Df|=g_f$, by Proposition~A.13
in Bj\"orn--Bj\"orn~\cite{BBbook}.

\section{Supersolutions and superharmonic functions}
\label{sect-superharm}

In the literature on potential theory in metric spaces one usually studies
the following (super)minimizers based on upper gradients.

\begin{deff} \label{def-quasimin}
A function $u \in \Nploc(\Om)$ is a
\emph{\textup{(}super\/\textup{)}minimizer} in $\Om$
if
\[
      \int_{\{\phi \ne 0\}} g^p_u \, d\mu
           \le \int_{\{\phi \ne 0\}} g_{u+\phi}^p \, d\mu
           \quad \text{for all (nonnegative) } \phi \in \Lipc(\Om).
\]
A \emph{\p-harmonic function} is a continuous minimizer.
\end{deff}

Here $\Lipc(\Om)=\{\phi \in \Lip(X): \supp \phi \Subset \Om\}$
and $E \Subset \Om$ if $\itoverline{E}$ is a compact subset of $\Om$.

Minimizers were first studied by Shanmugalingam~\cite{Sh-harm},
and superminimizers by Kinnunen--Martio~\cite{KiMa02}.
For various characterizations of minimizers and superminimizers
  see A.~Bj\"orn~\cite{ABkellogg}.
If $u$ is a superminimizer, then
its \emph{lsc-regularization}
\begin{equation}
\label{essliminf}
 u^*(x):=\essliminf_{y\to x} u(y)= \lim_{r \to 0} \essinf_{B(x,r)} u
\end{equation}
is also a superminimizer  and $u^*= u$ q.e.,
see  \cite{KiMa02} or
Bj\"orn--Bj\"orn--Parviainen~\cite{BBP}.
If $u$ is a minimizer, then $u^*$ is  continuous,
(by Kinnunen--Shan\-mu\-ga\-lin\-gam~\cite{KiSh01}
or Bj\"orn--Marola~\cite{BMarola}),
and thus \p-harmonic.
For further discussion and references on the topics in this section
see \cite{BBbook}.

In this paper, we consider
\emph{Cheeger\/ \textup{(}super\/\textup{)}minimizers} and
\emph{Cheeger \p-harmonic functions} defined by replacing $g_u$ and $g_{u+\phi}$ in
Definition~\ref{def-quasimin} by $|Du|$ and $|D(u+\phi)|$, respectively,
where $|\cdot|$ is the inner product norm
in \eqref{df-comp-gf}.
Due to the  vector structure of the Cheeger gradient one can also
make the following definition. (There is no corresponding notion
for upper gradients.)

\begin{deff}
A function $u\in N^{1,p}\loc(\Omega)$ is a
\emph{\textup{(}super\/\textup{)}solution} in $\Omega$ if
\[
\int_{\Omega}|Du|^{p-2}Du\cdot D\varphi\,d\mu\ge 0
           \quad \text{for all (nonnegative) } \phi \in \Lipc(\Om),
\]
where $\cdot$ is the inner product giving rise to the norm
in \eqref{df-comp-gf}.
\end{deff}

It can be shown
that a function is a (super)solution if and only
if it is a Cheeger (super)minimizer, the proof is the same as for
Theorem~5.13 in
Hei\-no\-nen--Kil\-pe\-l\"ai\-nen--Martio~\cite{HeKiMa}.
In weighted $\R^n$, with the choice $Df = \nabla f$, we have
$g_f=|Df|=|\nabla f|$ a.e.\ which implies that
(super)minimizers, Cheeger (super)minimizers and (super)solutions
coincide, and are the same as in \cite{HeKiMa}.

Let $G \subset X$ be a nonempty bounded open set with $\Cp(X \setm G)>0$.
We consider the following obstacle problem in $G$.

\begin{deff} \label{deff-obst}
For $f \in \Np(G)$ and $\psi : G \to \eR$ let
\[
    \K_{\psi,f}(G)=\{v \in \Np(G) : v-f \in \Np_0(G)
            \text{ and } v \ge \psi \ \text{q.e. in } G\}.
\]
A function $u \in \K_{\psi,f}(G)$
is a \emph{solution of the $\K_{\psi,f}(G)$-Cheeger obstacle problem}
if
\[
       \int_G |D u|^p \, d\mu
       \le \int_G |D v|^p \, d\mu
       \quad \text{for all } v \in \K_{\psi,f}(G).
\]
\end{deff}

A solution to the $\K_{\psi,f}(G)$-Cheeger obstacle problem is easily
seen to be a Cheeger superminimizer
(i.e.\ a supersolution)
in $G$.
Conversely, a supersolution $u$ in $\Om$ is a solution
of the $\K_{u,u}(G)$-Cheeger obstacle problem for all open $G \Subset \Om$
with $\Cp(X\setm G)>0$.

If $\K_{\psi,f}(G) \ne \emptyset$, then
there is a solution $u$ of the $\K_{\psi,f}(G)$-Cheeger obstacle problem,
and this solution is unique up to equivalence in $\Np(G)$.
Moreover, $u^*$
is the unique lsc-regularized solution.
Conditions for when $\K_{\psi,f}(G) \ne \emptyset$ can be found in
Bj\"orn--Bj\"orn~\cite{BBnonopen}.
If the obstacle $\psi$ is continuous,
as a function with values in $[-\infty,\infty)$,
then $u^*$
is also continuous.
These results were obtained for the upper gradient obstacle problem
by Kinnunen--Martio~\cite{KiMa02}, where superharmonic functions based on upper gradients were also introduced.
As with most of the results in the metric theory their proofs
work verbatim for the Cheeger case considered here.
Since most of the theory has been developed in the setting
of upper gradients, we will often just refer to the upper gradient equivalents
of results for Cheeger (super)minimizers.

For $f \in \Np(G)$, we let $\oHpind{G} f$ denote the
continuous solution of the $\K_{-\infty,f}(G)$-Cheeger obstacle problem.
This function is Cheeger \p-harmonic in $G$ and has the same boundary values
(in the Sobolev sense) as $f$ on $\partial G$, and hence is also
called the solution of the (Cheeger)
Dirichlet problem with Sobolev boundary values.

\begin{deff} \label{deff-superharm}
A function $u : \Om \to (-\infty,\infty]$ is \emph{Cheeger superharmonic}
in $\Om$ if
\begin{enumerate}
\renewcommand{\theenumi}{\textup{(\roman{enumi})}}%
\item $u$ is lower semicontinuous;
\item \label{cond-b}
$u$ is not identically $\infty$ in any component of $\Om$;
\item \label{cond-c}
for every nonempty open set $\Om'\Subset \Om$ with
$\Cp(X \setm \Om')>0$
and all functions
$v \in \Lip(X)$,
we have $\oHpind{\Om'} v \le u$ in $\Om'$
whenever $v \le u$ on $\bdy \Om'$.
\end{enumerate}
\end{deff}

This definition of Cheeger superharmonicity is equivalent to  the one in
Hei\-no\-nen--Kil\-pe\-l\"ai\-nen--Martio~\cite{HeKiMa}, see A.~Bj\"orn~\cite{ABsuper}.
A locally bounded Cheeger superharmonic function is a supersolution,
and all Cheeger superharmonic functions are lsc-regularized.
Conversely, any lsc-regularized supersolution
is Cheeger superharmonic.
See Kinnunen--Martio~\cite{KiMa02}.

\begin{deff} \label{deff-Ch-pot}
The \emph{Cheeger capacitary potential} $u_E$ of a set $E \subset G$ in $G$
is the lsc-regularized solution of the $\K_{\chi_E,0}(G)$-Cheeger obstacle
 problem.

The \emph{Cheeger variational
capacity} of $E\subset G$ is defined as
\begin{equation} \label{eq-cpp-def}
\cpp(E,G) = \int_X |Du_E|^p\, d\mu = \inf_u\int_X |Du|^p\, d\mu,
\end{equation}
where the infimum is taken over all $u \in \Np_0(G)$
such that $u\geq 1$ on $E$.
\end{deff}

By \eqref{df-comp-gf}, we have
\begin{equation}  \label{eq-cpp-cp}
     \cpp(E,G) \simeq \cp(E,G).
\end{equation}

\section{Supersolutions and Radon measures}
\label{sect-supersoln-Radon}

\emph{In this section we assume that $\Om$ is a
nonempty bounded open set with $\Cp(X \setm \Om)>0$.}

\medskip

It was shown in
Bj\"orn--MacManus--Shanmugalingam~\cite[Propositions~3.5 and~3.9]{BMS}
that there is a one-to-one
correspondence between supersolutions
in $\Om$ and Radon measures in the
dual $\Np_0(\Om)'$.
A \emph{Radon measure} is a positive complete Borel measure which
is finite on every compact set. 

\begin{prop}    \label{prop-supersol-Radon}
For every supersolution $u$
in $\Om$ there is a Radon measure $\nu \in \Np_0(\Om)'$
such that for all $\phi \in \Np_0(\Om)$,
\begin{equation}  \label{eq-deff-Tu}
Tu(\phi):= \int_\Om|Du|^{p-2}Du\cdot D\phi\, d\mu = \int_\Om
\phi\ d\nu,
\end{equation}
where $\cdot$ is the inner product giving rise to the norm
in \eqref{df-comp-gf}.

Conversely, if $\nu \in \Np_0(\Om)'$ is a Radon measure on $\Om$ then
there exists a unique lsc-regularized
$u\in\Np_0(\Om)$ satisfying $Tu = \nu$ in the
sense of~\eqref{eq-deff-Tu} for all $\phi \in \Np_0(\Om)$.
Moreover, $u$ is a nonnegative supersolution in $\Om$.
\end{prop}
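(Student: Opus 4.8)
The plan is to reduce the statement to the already-known correspondence from Bj\"orn--MacManus--Shanmugalingam~\cite{BMS}, but I will sketch the argument in enough detail to see where the work lies. For the first (direct) implication, let $u$ be a supersolution in $\Om$. First I would observe that the map $Tu\colon \phi\mapsto \int_\Om |Du|^{p-2}Du\cdot D\phi\,d\mu$ is a bounded linear functional on $\Np_0(\Om)$: linearity is clear, and boundedness follows from H\"older's inequality, since $|Du|^{p-2}|Du|=|Du|^{p-1}\in L^{p'}(\Om)$ (as $u\in\Np(\Om)$ when restricted to the bounded set $\Om$, using that a locally bounded supersolution is what is being corresponded — more carefully one first handles bounded supersolutions, then truncates) and $|D\phi|\in L^p(\Om)$. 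Next I would show $Tu$ is a \emph{nonnegative} functional on nonnegative $\phi\in\Np_0(\Om)$: this is exactly the supersolution inequality extended from $\Lipc(\Om)$ to nonnegative elements of $\Np_0(\Om)$ by density of Lipschitz functions in $\Np(X)$ (valid here since $X$ is complete, doubling and supports a Poincar\'e inequality) together with the continuity of $\phi\mapsto D\phi$ in $L^p$. A nonnegative bounded linear functional on $\Np_0(\Om)$ is represented by integration against a positive measure $\nu$; that $\nu$ is a Radon measure — i.e.\ finite on compact subsets of $\Om$ — is seen by testing against a Lipschitz cutoff that equals $1$ on a given compact $K\Subset\Om$ and is supported in $\Om$, which is admissible in $\Np_0(\Om)$; this is the content of \cite[Proposition~3.5]{BMS}.

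For the converse, given a Radon measure $\nu\in\Np_0(\Om)'$, the existence of a solution is a variational argument: one minimizes the functional
\[
J(v)=\frac1p\int_\Om |Dv|^p\,d\mu-\int_\Om v\,d\nu
\]
over $v\in\Np_0(\Om)$. I would check that $J$ is coercive and weakly lower semicontinuous on the reflexive Banach space $\Np_0(\Om)$ — coercivity uses the Sobolev/Poincar\'e inequality to dominate $\int v\,d\nu$ by a small multiple of $\int|Dv|^p\,d\mu$ plus a constant (here $\nu\in\Np_0(\Om)'$ is exactly what is needed), and weak lower semicontinuity of $v\mapsto\int|Dv|^p$ is the standard convexity argument, while $v\mapsto\int v\,d\nu$ is weakly continuous since $\nu$ lies in the dual. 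A minimizer $u$ then satisfies the Euler--Lagrange equation $Tu=\nu$ in the sense of~\eqref{eq-deff-Tu} for all $\phi\in\Np_0(\Om)$, obtained by differentiating $t\mapsto J(u+t\phi)$ at $t=0$ and using the $(p-2)$-homogeneity of the derivative of $|\cdot|^p$. Uniqueness of the lsc-regularized solution follows from strict monotonicity of the operator $T$: if $Tu_1=Tu_2=\nu$, testing with $\phi=u_1-u_2\in\Np_0(\Om)$ gives $\int_\Om(|Du_1|^{p-2}Du_1-|Du_2|^{p-2}Du_2)\cdot D(u_1-u_2)\,d\mu=0$, and the elementary vector inequality for the $p$-Laplacian-type operator forces $Du_1=Du_2$ a.e., hence $u_1-u_2$ is constant on components of $\Om$; being in $\Np_0(\Om)$ it vanishes, so $u_1=u_2$, and then the lsc-regularizations agree everywhere. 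Nonnegativity of $u$ is seen by testing with $\phi=\min\{u,0\}=-u_-\in\Np_0(\Om)$ (admissible since it vanishes where $u\ge0$, in particular on $X\setm\Om$): this gives $-\int_\Om|Du_-|^p\,d\mu=\int_\Om\min\{u,0\}\,d\nu\le0$ because $\nu\ge0$, forcing $Du_-=0$ and hence $u_-\equiv0$. Finally, that $u$ is a supersolution is immediate from $Tu=\nu\ge0$ restricted to nonnegative $\phi\in\Lipc(\Om)$, and the lsc-regularized supersolution is Cheeger superharmonic by the results recalled in Section~\ref{sect-superharm}.

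The main obstacle is the coercivity step in the converse direction: one must quantitatively control $\bigl|\int_\Om v\,d\nu\bigr|$ by $\eps\|Dv\|_{L^p(\Om)}^p+C_\eps$, which is where membership $\nu\in\Np_0(\Om)'$ together with the Poincar\'e/Sobolev inequality (giving $\|v\|_{\Np_0(\Om)}\lesssim\|Dv\|_{L^p(\Om)}$ on the bounded set $\Om$ with $\Cp(X\setm\Om)>0$) is essential — without it the minimization problem need not have a solution. The remaining steps (representation of nonnegative functionals, the vector inequality for strict monotonicity, and the truncation arguments for nonnegativity and for passing from $\Lipc$ to $\Np_0$ test functions) are routine given the density of Lipschitz functions and the duality theory already cited from~\cite{BMS}.
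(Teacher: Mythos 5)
Your proposal follows essentially the same route as the paper, whose proof simply defers to \cite[Propositions~3.5 and~3.9]{BMS} and highlights exactly the points you identify: coercivity of $T$ from the Poincar\'e inequality for $\Np_0(\Om)$ (valid since $\Cp(X\setm\Om)>0$), pointwise uniqueness from the lsc-regularization, and nonnegativity (which the paper instead deduces from the comparison principle of Lemma~\ref{lem-comp-potentials} applied against the zero measure). One small sign slip in your nonnegativity step: testing with $\phi=\min\{u,0\}$ gives $+\int_\Om|Du_\limminus|^p\,d\mu=\int_\Om\min\{u,0\}\,d\nu\le0$, not $-\int_\Om|Du_\limminus|^p\,d\mu\le0$; only the former forces $Du_\limminus=0$, after which the $\Np_0$-Poincar\'e inequality gives $u_\limminus=0$ as you say.
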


\begin{remark} This result is always false if we drop the assumption
$\Cp(X \setm \Om)>0$. Indeed, if $u$
is a nonnegative lsc-regularized supersolution in $\Om$,
then $u$ is Cheeger superharmonic in $\Om$.
If $\Cp(X \setm \Om)=0$, then
$u$ has a Cheeger superharmonic extension to $X$,
by Theorem~6.3  in A.~Bj\"orn~\cite{ABremove}
(or Theorem~12.3 in \cite{BBbook}),
which must be constant, by Corollary~9.14 in \cite{BBbook}.
(Note that if $\Om$ is bounded and $\Cp(X \setm \Om)=0$,
then also $X$ must be bounded.)
On the other hand, there are nonzero Radon measures
in $\Np_0(\Om)'$, so the existence
of a corresponding supersolution fails.
\end{remark}

\begin{proof}[Proof of Proposition~\ref{prop-supersol-Radon}]
See \cite[Propositions~3.5 and~3.9]{BMS},
where the result was stated under stronger assumptions than here, but the proof of this result is valid under our
assumptions. In particular,
as $\Cp(X \setm \Om)>0$,
the coercivity of the map $T$ follows from
the Poincar\'e inequality for $\Np_0$ (also called the \p-Friedrichs'
inequality), whose proof can be found e.g.\ in Corollary~5.54 in~\cite{BBbook}.

In \cite{BMS} the uniqueness was shown
up to equivalence between supersolutions.
The pointwise uniqueness for lsc-regularized supersolutions then
follows from~\eqref{essliminf}.
That $u$ is nonnegative follows from Lemma~\ref{lem-comp-potentials}
below, as $u \equiv 0$ is the
lsc-regularized supersolution corresponding to the zero measure.
\end{proof}

We need the following comparison principle.

\begin{lem}  \label{lem-comp-potentials}
Let
$\nu_1,\nu_2\in\Np_0(\Om)'$ be Radon measures
such that $\nu_1\le\nu_2$.
If $u_1,u_2\in\Np_0(\Om)$ are the corresponding
lsc-regularized
supersolutions given
by Proposition~\ref{prop-supersol-Radon}, then
$u_1\le u_2$
in $\Om$.
\end{lem}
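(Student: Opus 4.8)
The plan is to test the weak inequality $Tu_1 \le Tu_2$ against the nonnegative Newtonian function $\phi := (u_1 - u_2)_+ \in \Np_0(\Om)$. First I would check that $\phi$ is an admissible test function: since $u_1, u_2 \in \Np_0(\Om)$ and $\Np_0(\Om)$ is a lattice, $\phi = \max\{u_1 - u_2, 0\} \in \Np_0(\Om)$, and it vanishes (q.e., hence in the relevant sense) outside $\Om$. Subtracting the two identities in \eqref{eq-deff-Tu} and using $\nu_1 \le \nu_2$ gives
\[
\int_\Om \bigl(|Du_1|^{p-2}Du_1 - |Du_2|^{p-2}Du_2\bigr)\cdot D\phi\, d\mu
= \int_\Om \phi\, d\nu_1 - \int_\Om \phi\, d\nu_2 \le 0,
\]
because $\phi \ge 0$ and $\nu_1 \le \nu_2$ as measures.

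Next I would identify $D\phi$ on the set where it matters. On the set $\{u_1 > u_2\}$ we have, by the locality of Cheeger gradients (the same q.e.\ identification used for minimal upper gradients, $g_f = g_h$ a.e.\ on $\{f = h\}$, together with \eqref{df-comp-gf}), $D\phi = D(u_1 - u_2) = Du_1 - Du_2$ a.e., while $D\phi = 0$ a.e.\ on $\{u_1 \le u_2\}$. Hence the integral reduces to an integral over $A := \{u_1 > u_2\}$:
\[
\int_{A} \bigl(|Du_1|^{p-2}Du_1 - |Du_2|^{p-2}Du_2\bigr)\cdot (Du_1 - Du_2)\, d\mu \le 0.
\]
Now I invoke the standard strict monotonicity of the vector field $\xi \mapsto |\xi|^{p-2}\xi$ with respect to the inner product norm $|\cdot|_x$ from \eqref{df-comp-gf}: there is $c_p > 0$ such that $(|\xi|^{p-2}\xi - |\eta|^{p-2}\eta)\cdot(\xi - \eta) \ge 0$ for all $\xi, \eta$, with equality only when $\xi = \eta$ (for $p \ge 2$ one even has a clean lower bound $c_p|\xi-\eta|^p$; for $1 < p < 2$ one uses the standard quotient estimate). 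Therefore the integrand above is nonnegative, and combined with the displayed inequality it must vanish a.e.\ on $A$, forcing $Du_1 = Du_2$ a.e.\ on $A$, i.e.\ $D\phi = 0$ a.e.\ on all of $\Om$.

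Finally, from $D\phi = 0$ a.e.\ in $\Om$ and $\phi \in \Np_0(\Om)$ I conclude $\phi$ is constant on each component of $\Om$; since $\phi$ vanishes q.e.\ on $X \setm \Om$ (and a Newtonian function extended by zero which has zero gradient and zero boundary values must be zero — here one uses the Poincar\'e inequality for $\Np_0$, equivalently the \p-Friedrichs inequality cited in the proof of Proposition~\ref{prop-supersol-Radon}, to control $\|\phi\|_{L^p}$ by $\|D\phi\|_{L^p} = 0$), we get $\phi = 0$ q.e., hence $u_1 \le u_2$ q.e.\ in $\Om$. To upgrade this to the pointwise inequality everywhere, I use that $u_1$ and $u_2$ are lsc-regularized: by \eqref{essliminf}, $u_i(x) = \essliminf_{y \to x} u_i(y)$, and an inequality holding q.e.\ is preserved under the $\essliminf$ operation (sets of capacity zero have measure zero), so $u_1 \le u_2$ everywhere in $\Om$. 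The main obstacle is the gradient locality/identification step together with correctly invoking strict monotonicity in the $x$-dependent inner product norm — but since \eqref{df-comp-gf} makes $|\cdot|_x$ a genuine inner product norm pointwise, the classical monotonicity inequality applies verbatim at a.e.\ point, and the rest is routine.
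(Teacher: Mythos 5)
Your proposal is correct and follows essentially the same route as the paper: testing with $\phi=(u_1-u_2)_\limplus$, localizing to $\{u_1>u_2\}$ via gradient locality, concluding $D\phi=0$ a.e.\ from monotonicity of $\xi\mapsto|\xi|^{p-2}\xi$, and finishing with the Poincar\'e inequality for $\Np_0$ and lsc-regularization. The only cosmetic difference is that you cite the classical strict monotonicity inequality, whereas the paper derives the equality case explicitly via Cauchy--Schwarz and Young's inequality.
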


\begin{proof}
Inserting $\phi=(u_1-u_2)_\limplus\in\Np_0(\Om)$ into
the equation~\eqref{eq-deff-Tu} for $u_1$ and $u_2$ gives
\begin{align}   \label{eq-test-with-phi}
0 &\le \int_\Om \phi\,d\nu_2 - \int_\Om \phi\,d\nu_1 \\
&= \int_\Om (|Du_2|^{p-2} Du_2 - |Du_1|^{p-2} Du_1)\cdot D\phi \,d\mu
\nonumber\\
&= \int_{\{u_1>u_2\}} (|Du_2|^{p-2} Du_2 - |Du_1|^{p-2} Du_1)\cdot (Du_1-Du_2) \,d\mu
\nonumber\\
&\le \int_{\{u_1>u_2\}} (|Du_2|^{p-1} |Du_1| + |Du_1|^{p-1} |Du_2|
- |Du_1|^p - |Du_2|^p) \,d\mu. \nonumber
\end{align}
The Young inequality shows that
\begin{align}  \label{eq-Holder-Young}
|Du_2|^{p-1} |Du_1| + |Du_1|^{p-1} |Du_2|
& \le \frac{p-1}{p} |Du_2|^p + \frac{1}{p} |Du_1|^p \nonumber \\
& \quad
+ \frac{p-1}{p} |Du_1|^p + \frac{1}{p} |Du_2|^p \nonumber\\
&
= |Du_1|^p + |Du_2|^p.
\end{align}
Inserting this into~\eqref{eq-test-with-phi} shows that equality must hold
in~\eqref{eq-test-with-phi} and~\eqref{eq-Holder-Young}
for a.e.\ $x$ such that  $u_1(x)>u_2(x)$.
This implies that $Du_1(x)=k(x)Du_2(x)$ for some $k(x)\ge0$ (by equality
in the last step of \eqref{eq-test-with-phi})
and $|Du_1(x)|=|Du_2(x)|$
(by equality in the Young inequality)
for a.e.\ $x$ with $u_1(x)>u_2(x)$.
It follows that $Du_1(x)=Du_2(x)$ for a.e.\ such $x$ and hence $D\phi=0$
a.e.\ in $\Om$.
The Poincar\'e inequality for $\Np_0$ (e.g.\
Corollary~5.54 in~\cite{BBbook})
then yields
\[
\int_\Om \phi^p\,d\mu \le C_\Om \int_\Om |D\phi|^p\,d\mu = 0.
\]
Hence $\phi=0$ a.e.\ in $\Om$, i.e.\ $u_1\le u_2$ a.e.\ in $\Om$.
As $u_1$ and $u_2$ are lsc-regularized, it follows
that $u_1 \le u_2$ everywhere in $\Om$.
\end{proof}

\begin{remark} \label{rmk-Ch-pot}
Note that if $u_E$ is the Cheeger capacitary potential of $E$ in $\Om$,
given by Definition~\ref{deff-Ch-pot}, then
$u_E$ is the lsc-regularized solution of the
$\K_{\psi,0}(\Om)$-Cheeger
obstacle
problem, where $\psi=1$ in $E$ and $\psi=-\infty$ otherwise.
Hence, for every $\phi\in\Np_0(\Om\setm E)$ and every $t>0$, the function
$u_E+ t\phi\in\K_{\psi,0}(\Om)$ and thus
\[
0 \le \int_\Om ( |Du_E + t D\phi|^p - |Du_E|^p )\,d\mu.
\]
Dividing by $t$ and letting $t\to0$ shows that
\begin{equation}  \label{eq-cap-pot-int-ge0}
\int_\Om |Du_E|^{p-2} Du_E \cdot D\phi \,d\mu \ge0,
\end{equation}
see~(2.8) in Mal\'y--Ziemer~\cite{MZ}.
Applying this also to $-\phi$ shows that equality must hold
in~\eqref{eq-cap-pot-int-ge0}.
Consequently, the measure $\nu_E=Tu_E$ satisfies
\begin{equation}   \label{eq-phi-nu-0}
\int_\Om \phi\,d\nu_E =0
\quad \text{for every }\phi\in\Np_0(\Om\setm E).
\end{equation}
\end{remark}

We will need the following lemma when proving the Cartan
  property (Theorem~\ref{thm-cartan}).
Later, in Theorem~\ref{thm-bootstrap}, we will generalize
this lemma to quasiopen sets and as a consequence obtain that
the measure $\nu_E$ is supported on the fine boundary $\bdyp E$;
that it is supported on the boundary $\bdy E$ is well known.

\begin{lem}  \label{lem-v-u-E-nu-equal}
Let $E\subset\Om$ be such that $\cp(E,\Om)<\infty$ and let $u_E$ be the
Cheeger
capacitary potential of $E$ in $\Om$, with the corresponding Radon measure
$\nu_E=Tu_E$.
If $G\subset\Om$ is open and $v\in\Np(\Om)$ is bounded and
such that $v=1$ q.e.\ in
$G\cap E$ then
\begin{equation}   \label{eq-equal-v-u-E-nu}
\int_G v\,d\nu_E = \int_G u_E\,d\nu_E.
\end{equation}
In particular, $\nu_E(G)=\int_G u_E\,d\nu_E$, and
if $\Cp(G\cap E)=0$ then
$\nu_E(G)=0$.
\end{lem}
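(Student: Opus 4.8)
The plan is to exploit the fact, recorded in Remark~\ref{rmk-Ch-pot} (equation~\eqref{eq-phi-nu-0}), that $\nu_E$ annihilates every function in $\Np_0(\Om\setm E)$. The natural candidate test function is $\phi = \eta(v-u_E)$, where $\eta$ is a cutoff that localizes to $G$; the difficulty is that $v-u_E$ need not vanish q.e.\ outside $E$ (only inside $G\cap E$), so $\phi$ need not lie in $\Np_0(\Om\setm E)$ unless we also cut off near $\bdy G$. So first I would fix an open $G'\Subset G$ and a Lipschitz cutoff $\eta$ with $0\le\eta\le1$, $\eta=1$ on $G'$, $\supp\eta\Subset G$, and consider $\phi=\eta(v-u_E)$. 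Since $v=1=u_E$ q.e.\ in $G\cap E$ (recall $u_E=1$ q.e.\ on $E$ as a capacitary potential), we have $v-u_E=0$ q.e.\ on $G\cap E\supset\supp\eta\cap E$, so $\phi=0$ q.e.\ on $E$; and $\phi=0$ on $X\setm G\supset X\setm\Om$. Using Lemma~\ref{lem-product} together with the boundedness of $v$, $u_E$ and $\eta$, one checks $\phi\in\Np_0(\Om\setm E)$ (using that $\Np_0$ is unaffected by replacing ``$=0$ on'' by ``$=0$ q.e.\ on''). Hence $\int_\Om\phi\,d\nu_E=0$, i.e.
\[
\int_\Om \eta v\,d\nu_E = \int_\Om \eta u_E\,d\nu_E .
\]

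The second step is to remove the cutoff $\eta$ by a limiting argument. Taking an exhaustion $G_1'\Subset G_2'\Subset\dots$ of $G$ with $\bigcup_j G_j'=G$ and corresponding cutoffs $\eta_j\uparrow\chi_G$ pointwise on $\Om$ (one can arrange $0\le\eta_j\le\eta_{j+1}\le1$), I would pass to the limit on both sides. Here I need that $\nu_E$ is a Radon measure and that $v$, $u_E$ are bounded: $|\eta_j v|\le\|v\|_\infty\chi_G\in L^1(\Om,\nu_E)$ because $\nu_E(\Om)<\infty$ (as $\nu_E\in\Np_0(\Om)'$ with $\cp(E,\Om)<\infty$, one has $\nu_E(\Om)=\int_\Om u_E\,d\nu_E=\cpp(E,\Om)<\infty$, using $Tu_E=\nu_E$ tested against $u_E\in\Np_0(\Om)$, cf.\ \eqref{eq-cpp-def}). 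Dominated convergence then yields $\int_\Om\eta_j v\,d\nu_E\to\int_G v\,d\nu_E$ and likewise for $u_E$, giving~\eqref{eq-equal-v-u-E-nu}.

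For the final two assertions: taking $v\equiv1$ on $\Om$ (which is bounded, in $\Np(\Om)$, and trivially equals $1$ q.e.\ in $G\cap E$) in~\eqref{eq-equal-v-u-E-nu} gives $\nu_E(G)=\int_G1\,d\nu_E=\int_G u_E\,d\nu_E$. If moreover $\Cp(G\cap E)=0$, then one can take $v=\chi_{X\setm(G\cap E)}$ — more carefully, pick $v\in\Np(\Om)$ bounded with $v=0$ q.e.\ on $G\cap E$ (e.g.\ $v\equiv0$, which equals $1$ q.e.\ on $G\cap E$ precisely because $\Cp(G\cap E)=0$) — then the right-hand side of~\eqref{eq-equal-v-u-E-nu} equals $\int_G0\,d\nu_E=0$, whence $\nu_E(G)=0$.

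The main obstacle is the first step: verifying rigorously that $\phi=\eta(v-u_E)\in\Np_0(\Om\setm E)$, i.e.\ that it is genuinely in the Newtonian class (not merely a.e.\ equal to something there) and has the right q.e.\ vanishing, so that~\eqref{eq-phi-nu-0} applies. This requires the Leibniz-type product estimate (Lemma~\ref{lem-product} and the Leibniz rule cited in its proof), the quasicontinuity of Newtonian functions, and the standard fact that the q.e.-zero-boundary-value class coincides with the everywhere-zero one. Everything after that is a routine dominated-convergence passage using only boundedness and $\nu_E(\Om)<\infty$.
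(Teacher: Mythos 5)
Your proposal is correct and follows essentially the same route as the paper: test \eqref{eq-phi-nu-0} with $\eta(v-u_E)$ for a cutoff $\eta\in\Lipc(G)$, let $\eta\nearrow\chi_G$ using dominated convergence, and then take $v\equiv1$ and $v\equiv0$ for the final assertions. Your more detailed verification that $\eta(v-u_E)\in\Np_0(\Om\setm E)$ and that $\nu_E(\Om)<\infty$ only fleshes out what the paper leaves implicit.
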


\begin{proof}
For every $\eta\in\Lip_c(G)$ with $0\le\eta\le1$ we have
$\eta(v-u_E)\in\Np_0(\Om\setm E)$.
Thus, \eqref{eq-phi-nu-0} yields that
\[
\int_{G} \eta(v-u_E)\,d\nu_E = 0.
\]
Since $v-u_E$ and $G$ are bounded,
dominated convergence and letting $\eta\nearrow \chi_{G}$
imply~\eqref{eq-equal-v-u-E-nu}.
For the last part,
apply this to $v=1$ and $v=0$ respectively.
\end{proof}

\section{Thinness and the fine topology}
\label{sect-fine-cont}

We now define the fine topological notions which are central
in this paper.

\begin{deff}\label{deff-thinness}
A set $E\subset X$ is  \emph{thin} at $x\in X$ if
\begin{equation}   \label{deff-thin}
\int_0^1\biggl(\frac{\cp(E\cap B(x,r),B(x,2r))}{\cp(B(x,r),B(x,2r))}\biggr)^{1/(p-1)}
     \frac{dr}{r}<\infty.
\end{equation}
A set $U\subset X$ is \emph{finely open} if
$X\setminus U$ is thin at each point $x\in U$.
\end{deff}

It is easy to see that the finely open sets give rise to a
topology, which is called the \emph{fine topology}.
Every open set is finely open, but the converse is not true in general.

In the definition of thinness,
we make the convention that the integrand
is 1 whenever $\cp(B(x,r),B(x,2r))=0$.
This happens e.g.\ if $X=B(x,2r)$,
but never if $r < \frac{1}{2}\diam X$.
Note that thinness is a local property.
Because of \eqref{eq-cpp-cp}, thinness
can equivalently be defined using
the Cheeger variational capacity $\cpp$.

\begin{deff}
A function $u : U \to \eR$, defined on a finely open set $U$, is
\emph{finely continuous} if it is continuous when $U$ is equipped with the
fine topology and $\eR$ with the usual topology.
\end{deff}

Since every open set is finely open, the fine topology
generated by the finely open\index{finely!open} sets is finer than the metric topology.
In fact, it is the coarsest topology making all (Cheeger) superharmonic functions
finely continuous, by J.~Bj\"orn~\cite[Theorem~4.4]{JB-pfine},
Korte~\cite[Theorem~4.3]{korte08}
and Bj\"orn--Bj\"orn--Latvala~\cite[Theorem~1.1]{BBL1}.
See \cite[Section~11.6]{BBbook} and \cite{BBL1}
for further discussion on thinness and the fine topology.

\section{The Cartan, Choquet and Kellogg properties}
\label{sect-Cartan}

We start this section by proving
the Cartan property (Theorem~\ref{thm-cartan}).
The proof combines arguments
in Kilpel\"ainen--Mal\'y~\cite[p.~155]{KiMa} with those in Section~2.1.5
in Mal\'y--Ziemer~\cite{MZ}.
As in \cite{KiMa}, the pointwise estimate \eqref{eq-est-u(x0)} is essential here.
However, to obtain the estimate $\nu_k(B_j)\le \capp (E_j,B_{j-1})$, in
\cite{KiMa} they use the dual characterization of capacity as the
supremum of measures on $E_j$ with potentials bounded by 1.
A similar estimate follows also from Theorem~2.45 in \cite{MZ}.
Here we instead use a direct derivation of
$\nu_k(B_j)\le \capp (E_j,B_{j-1})$ based on
\eqref{eq-deff-Tu}, Remark~\ref{rmk-Ch-pot} and
Lemma~\ref{lem-v-u-E-nu-equal}.

\begin{proof}[Proof of Theorem~\ref{thm-cartan}]
By Lemma~4.7
in Bj\"orn--Bj\"orn--Latvala~\cite{BBL1},
we may assume that $E$ is open.
Let $B_j = B(x_0,r_j)$, $r_j = 2^{-j}$,
$E_j = E\cap B_j$
and $u_{j}$ be the Cheeger capacitary potential
of $E_j$ with respect to $B_{j-1}$, $j=1,2,\ldots\ $.
As $E_j$ is open, we have $u_{j}=1$ in $E_j$.
Let $k\geq 1$ be an
integer to be specified later,
but so large that $\diam B_k < \frac{1}{6} \diam X$,
and let
$\nu_k=Tu_k$ be the Radon measure in $\Np_0(B_{k-1})'$,
given by Proposition~\ref{prop-supersol-Radon}.

Since $u_{k}=1$ in $E_k$, it  remains to show that
$u_{k}(x_0) < 1$ for some $k$.
By Remark~5.4 in Kinnunen--Martio~\cite{KiMa02}
(or Proposition~8.24 in \cite{BBbook}), $x_0$ is a Lebesgue point of $u_k$. Hence,
Proposition~4.10 in
Bj\"orn--MacManus--Shanmugalingam~\cite{BMS} shows that
\begin{equation}   \label{eq-est-u(x0)}
u_{k}(x_0)
\leq c \biggl( \vint_{B_k}u_{k}^p\,d\mu \biggr)^{1/p} +
c\sum_{j=k-1}^\infty\biggl(r_j^p\frac{\nu_k(B_j)}{\mu(B_j)}\biggr)^{1/(p-1)}.
\end{equation}
The first term in the right-hand side can be estimated using
the Sobolev inequality \cite[Theorem~5.51]{BBbook}
and the fact that
$\cp(B_k,B_{k-1}) \simeq r_k^{-p}\mu(B_k)$ (by
Lemma~3.3 in J.~Bj\"orn~\cite{Bj} or Proposition~6.16 in \cite{BBbook})
as
\begin{align}    \label{eq-est-vint-uk}
\vint_{B_k}u_{k}^p\,d\mu \le \frac{1}{\mu(B_k)} \int_{B_{k-1}} u_k^p\,d\mu
\le \frac{Cr_k^p}{\mu(B_k)} \int_{B_{k-1}} |Du_k|^p \,d\mu
\simeq \frac{\cp(E_k,B_{k-1})}{\cp(B_k,B_{k-1})}.
\end{align}
Here we have also used \eqref{eq-cpp-def} and \eqref{eq-cpp-cp}.

As for the second term in~\eqref{eq-est-u(x0)},
let $v_j$ be the lsc-regularized solution of $Tv_j=\nu_k|_{B_j}$ in
$B_{k-1}$, $j \ge k$. Lemma~\ref{lem-comp-potentials} shows that $v_j\le u_k\le1$
in $B_{k-1}$. Thus, with $v_j$ as a test function in \eqref{eq-deff-Tu},
we have
\begin{equation}  \label{eq-int-vj-le-cap-new}
\int_{B_{k-1}} |D v_j|^p\,d\mu=\int_{B_j} v_j \,d\nu_k \le
\int_{B_j} u_k \,d\nu_k.
\end{equation}
Using Lemma~\ref{lem-v-u-E-nu-equal} (for the first equality below)
and \eqref{eq-deff-Tu} with $u_j$ as a test function
(for the third equality) we obtain that
\begin{align}  \label{eq-int-vj-le-cap}
\int_{B_j} u_k \,d\nu_k
& = \int_{B_j} u_j \,d\nu_k
 =  \int_{B_{k-1}} u_j \,d\nu_k|_{B_j}
 = \int_{B_{k-1}} |D v_j|^{p-2} Dv_j \cdot Du_j \,d\mu \nonumber \\
& \le \biggl( \int_{B_{k-1}} |D v_j|^p\,d\mu \biggr)^{1-1/p}
            \biggl( \int_{B_{k-1}} |D u_j|^p\,d\mu \biggr)^{1/p}.
\end{align}
Together with \eqref{eq-int-vj-le-cap-new} this implies that
\[
\int_{B_{k-1}} |D v_j|^p\,d\mu \le \int_{B_{k-1}} |D u_j|^p\,d\mu
= \cpp(E_j,B_{j-1}),
\]
where $\cpp$ denotes the Cheeger variational capacity.
Inserting this into \eqref{eq-int-vj-le-cap} yields,
\[
\int_{B_j} u_k \,d\nu_k
\le \cpp(E_j,B_{j-1}),
\]
which together with the last part of Lemma~\ref{lem-v-u-E-nu-equal}
and \eqref{eq-cpp-cp} shows that
\begin{align*}
\nu_k(B_j)&= \int_{B_j} u_k \,d\nu_k
\le \cpp(E_j,B_{j-1})
\simeq \cp(E_j,B_{j-1}).
\end{align*}
Hence using $\cp(B_j,B_{j-1}) \simeq r_j^{-p}\mu(B_j)$ again we obtain
\begin{align} \label{eq-est-Wolff-pot}
\sum_{j=k-1}^\infty\biggl(r_j^p\frac{\nu_k(B_j)}{\mu(B_j)}\biggr)^{1/(p-1)}
&\leq C\sum_{j=k-1}^\infty\biggl(\frac{\cp(E_j,B_{j-1})}
           {\cp(B_j,B_{j-1})}\biggr)^{1/(p-1)}.
\end{align}
Since $E$ is thin at $x_0$, both \eqref{eq-est-vint-uk}
and~\eqref{eq-est-Wolff-pot} can be made arbitrarily small by choosing $k$
large enough.
Thus $u_k(x_0)<1$ for large enough $k$.
\end{proof}

We now turn to the proof of the Choquet property
(Theorem~\ref{thm-choquet-intro}).
The following notation
is common in the
literature.
The \emph{base} $b_p E$ of a set $E\subset X$
consists of all points $x\in X$ where $E$ is \emph{thick},
i.e.\ not thin, at $x$.
Using this notation, the Choquet property
can be formulated as follows.

\begin{thm} \label{thm-choquet}
\textup{(Choquet property)}
For any $E\subset X$ and any $\eps >0$ there is an open set $G$ so that
\[
   G \cup b_{p}E = X\quad\textrm{and}\quad C_{p} (E \cap G) < \eps.
\]
\end{thm}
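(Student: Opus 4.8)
The plan is to reduce the problem to a countable union of local pieces and then to cover, for each piece, the set of thinness points by a small open set using the Cartan property. First, I would fix $\eps>0$ and a countable family of balls $\{B_i\}_{i=1}^\infty$, say $B_i=B(z_i,\rho_i)$ with doubled balls $2B_i$, covering $X$ and chosen so that $\diam(2B_i)<\tfrac16\diam X$ when $X$ is unbounded (when $X$ is bounded one treats the whole space on a single scale); this is possible since $X$ is separable. It suffices to produce, for each $i$, an open set $G_i\subset B_i$ with $\Cp(E\cap G_i)<2^{-i}\eps$ such that every point of $B_i$ at which $E$ is thin lies in $G_i$, because then $G:=\bigcup_i G_i$ is open, satisfies $\Cp(E\cap G)<\eps$ by countable subadditivity of $\Cp$, and contains every point of $X$ at which $E$ is thin (equivalently $G\cup b_pE=X$). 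So from now on I work inside a fixed ball $B=B_i$ and write $F=E\cap 2B$, which has $\cp(F,4B)<\infty$.

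The heart of the argument is the following dichotomy at a point $x\in B$ where $E$ is thin. If $\Cp(\{x\})>0$, then since $E$ is thin at $x$ one has $x\notin\itoverline{E\cap 2B\setminus\{x\}}$ in the relevant sense, or more simply one can shrink $E$ near $x$; the genuinely delicate case (and the source of the ``extra complications'' the authors mention) is exactly the coexistence of points of positive and of zero capacity. For a point $x$ of zero capacity one can always enlarge $E$ slightly near $x$ without changing $\Cp(E\cap G)$, so these points are harmless. For $x\in\itoverline E\setminus E$ with (without loss of generality) $E$ thin at $x$, apply the Cartan property (Theorem~\ref{thm-cartan}): there is a bounded positive Cheeger superharmonic function $u_x$ on an open neighbourhood $V_x$ of $x$ with $u_x(x)<\liminf_{E\ni y\to x}u_x(y)$. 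Normalizing, one finds $t_x<1<s_x$ and an open neighbourhood $W_x\subset V_x$ of $x$ such that $u_x<t_x$ at $x$ while $u_x>s_x$ on $E\cap W_x$ minus a set of zero capacity; then $G_x:=W_x\cap\{u_x>s_x\}$ — which is open by lower semicontinuity of $u_x$ — is an open neighbourhood of $x$ with $E\cap W_x\subset G_x$ up to capacity zero. The point is that on $G_x$ the superharmonic function $\tfrac1{s_x}u_x$ (extended suitably by $0$) is an admissible competitor for the capacity $\Cp(E\cap G_x)$ after a standard truncation, which gives a bound $\Cp(E\cap G_x)\le C\int|D u_x|^p$ on the small ball; by choosing $W_x$ small enough and using that $u_x$ is a supersolution with locally finite energy, this energy integral, and hence $\Cp(E\cap G_x)$, can be made as small as we like.

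Finally I would assemble the local cover. The set $T_B$ of points of $B$ at which $E$ is thin is covered by $\{G_x:x\in T_B\}$ together with the open set obtained by engulfing the (capacity-zero) set of zero-capacity points; by the Lindelöf property of the separable metric space $X$, countably many $G_{x_m}$ plus one such capacity-zero cover suffice, and by the previous paragraph we may arrange $\Cp(E\cap G_{x_m})<4^{-m}2^{-i}\eps$. Setting $G_i:=\bigcup_m G_{x_m}$ (intersected with $B_i$) gives an open set containing all thinness points of $B_i$ with $\Cp(E\cap G_i)<2^{-i}\eps$ by countable subadditivity, completing the reduction. The main obstacle is the middle step: turning the qualitative Cartan conclusion ($u_x(x)<\liminf$) into a quantitative capacity estimate that is uniformly small, while correctly bookkeeping the exceptional capacity-zero sets coming both from the ``q.e.'' in the Cartan statement and from the points of $X$ that themselves have zero capacity — this is precisely where the proof departs from the Euclidean one in Mal\'y--Ziemer~\cite{MZ}.
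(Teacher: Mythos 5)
There is a genuine gap, and it sits at the centre of your argument. You define $G_x:=W_x\cap\{u_x>s_x\}$ and assert that it is an open neighbourhood of $x$; but by your own normalization $u_x(x)<t_x<1<s_x$, so $x\notin\{u_x>s_x\}$ and $G_x$ is \emph{not} a neighbourhood of $x$. The superlevel set of the Cartan function contains (quasi-all of) $E$ near $x$ while \emph{excluding} $x$ — which is the opposite of what the Choquet property needs: an open set containing the thin point and only a capacitarily negligible part of $E$. The natural candidate containing $x$ is the sublevel set $\{u_x<s_x\}$, but that is a sublevel set of a lower semicontinuous function and hence not open, and your proposal never addresses how to open it up. As a result the final covering step (``$T_B$ is covered by $\{G_x:x\in T_B\}$'') fails outright. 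The paper resolves exactly this point by taking $u_j$ to be the Cheeger \emph{capacitary potential} of $E\cap B_j$ in $2B_j$ and setting $G_j=\{x\in B_j:u_j(x)<1\}\cup G_j'$, where $G_j'$ is a small open set supplied by the quasicontinuity of $u_j$ so that $G_j$ becomes open; the Cartan property enters only to show $z\in G_j$, via the comparison principle $u_j\le v$ in $2B_j$ (Lemma~8.30 in \cite{BBbook}) applied to the Cartan function $v$, which gives $u_j(z)\le v(z)<1$.

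Your quantitative step is also not the right mechanism. With the capacitary potential one gets $u_j\ge1$ q.e.\ on $E\cap B_j$, so $\Cp(\{x\in E\cap B_j:u_j(x)<1\})=0$ and $\Cp(E\cap G_j)\le\Cp(G_j')<2^{-j}\eps$ — the smallness comes for free, with no energy estimate. By contrast, your bound $\Cp(E\cap G_x)\le C\int|Du_x|^p$ over a shrinking ball is not justified: admissibility for the Sobolev capacity $\Cp$ requires a global $\Np(X)$ function, and cutting $u_x/s_x$ off to a small ball introduces a cutoff whose gradient term does not vanish as the ball shrinks; making $\int_{W_x}|Du_x|^p\,d\mu$ small by absolute continuity of the integral does not control the full $\Np(X)$-norm of an admissible competitor. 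Finally, the dichotomy for points with $\dist(z,E\setminus\{z\})>0$ is handled in the paper not by ``enlarging $E$'' but by observing that thinness of $E$ at $z\in E$ forces $\Cp(\{z\})=0$ (Proposition~1.3 in \cite{BBL1}), so the corresponding capacitary potential vanishes identically and $z\in B_j\subset G_j$; your treatment of this case is too vague to check.
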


\begin{proof}
Let $\{B_j\}_{j=1}^\infty$ be a countable covering of $X$ by balls
such that every point is covered by arbitrarily small balls. Such a covering exists as $X$ is separable.
Choose $\eps>0$. For each $j$, let $u_j$ be the Cheeger capacitary potential of
$E \cap B_j$ with respect to $2B_j$. Since each $u_j$ is
quasicontinuous, there is an open set $G_j'$ with
$C_p(G_j')< 2^{-j}\eps$ such that the set
\begin{equation} \label{eq-Gj}
G_j:= \{x \in B_j : u_j(x) <1\}\cup G_j'
\end{equation}
is open. We set
\(
    G:= \bigcup_{j=1}^\infty G_j
\)
and will show
that $G \cup b_{p} E = X$.

Choose a point $z\in X \setminus b_{p} E$.
If $\dist(z,E\setm\{z\})>0$, then there is $B_j\ni z$ such that
$B_j \cap E$ is either empty or $\{z\}$.
If $B_j\cap E=\emptyset$, then $u_j\equiv0$.
If $B_j\cap E=\{z\}$, then
the thinness of $E$ at $z$ together with
Proposition~1.3 in Bj\"orn--Bj\"orn--Latvala~\cite{BBL1}
shows that $\Cp(\{z\})=0$, and
hence $u_j\equiv0$ as well.
In both cases we obtain $z\in B_j\subset G_j\subset G$.

We can therefore assume that $z\in\itoverline{E\setm\{z\}}$.
By Theorem~\ref{thm-cartan} (applied to $E \setm \{z\}$),
there is a bounded positive Cheeger superharmonic function
$v$ in an open neighbourhood of $z$ such that
\[
    v(z) < 1< \liminf_{E\ni x\to z} v(x).
\]
Hence we may fix a ball $B_j\ni z$ so that $v$ is Cheeger superharmonic in
$3B_j$ and $v\ge1$ in $B_j\cap E$.
Since $v$ is the lsc-regularized solution of the
$\K_{v,v}(2B_j)$-Cheeger obstacle
problem and $u_j$ is the lsc-regularized solution of the
$\K_{\chi_{B_j\cap E},0}(2B_j)$-Cheeger
obstacle problem, the comparison principle
in Lemma~5.4 in Bj\"orn--Bj\"orn~\cite{BB} (or
Lemma~8.30 in \cite{BBbook}) yields $u_j\le v$ in $2B_j$.
It follows that $u_j(z)<1$, and thus $z \in G_j \subset G$.

It remains to prove that $C_{p} (E \cap G) < \eps$.
For any $j$, we have $u_j \ge 1$ q.e.\ in $E\cap B_j$, and thus
\eqref{eq-Gj} implies
\[
    C_{p} (E \cap G_j) \le C_{p} ( \{x \in E \cap B_j : u_j(x) < 1\}) +
    C_{p} (G_j') = \Cp(G_j') < 2^{-j} \eps.
\]
By the countable subadditivity of the capacity we obtain $C_{p} (E \cap G) < \eps$.
\end{proof}

As a consequence of the Choquet property we can now deduce
Corollary~\ref{cor-kellogg-intro}. Because of Remark~\ref{twokelloggs}
below, we find the name fine Kellogg property natural.

\begin{cor}\textup{(Fine Kellogg property)}\label{cor-kellogg}
For any $E\subset X$ we have
\[
C_p(E\setminus b_pE)=0.
\]
\end{cor}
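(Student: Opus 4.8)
The plan is to deduce this directly from the Choquet property (Theorem~\ref{thm-choquet}), which has just been proved. Let $A = E \setminus b_p E$ be the set of points of $E$ at which $E$ is thin. Fix $\eps > 0$. By Theorem~\ref{thm-choquet} there is an open set $G$ with $G \cup b_p E = X$ and $\Cp(E \cap G) < \eps$. Since every point of $A$ is a point of $E$ at which $E$ is thin, $A \cap b_p E = \emptyset$, and therefore $A \subset X \setminus b_p E \subset G$. Combining this with $A \subset E$ gives $A \subset E \cap G$, whence by monotonicity of the Sobolev capacity $\Cp(A) \le \Cp(E \cap G) < \eps$.

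Since $\eps > 0$ was arbitrary, $\Cp(A) = 0$, which is exactly $\Cp(E \setminus b_p E) = 0$ as claimed. Note that this also yields the formulation \eqref{fine-Kellogg-intro} in the introduction, since $E \setminus b_p E = \{x \in E : E \text{ is thin at } x\}$ by the definition of the base $b_p E$.

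There is essentially no obstacle here: the entire content of the corollary is packaged into the Choquet property, and the only thing to verify is the set inclusion $E \setminus b_p E \subset E \cap G$, which is immediate from $G \supset X \setminus b_p E$. The work was done in proving Theorem~\ref{thm-choquet} (and, underlying that, the Cartan property Theorem~\ref{thm-cartan}); the present deduction is a one-line consequence, exactly as the phrase ``as a consequence of the Choquet property'' in the text before the corollary indicates.
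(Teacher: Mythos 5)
Your proposal is correct and coincides with the paper's own argument: apply Theorem~\ref{thm-choquet} to get an open $G$ with $G\cup b_pE=X$ and $\Cp(E\cap G)<\eps$, observe $E\setminus b_pE\subset E\cap G$, and let $\eps\to0$. Nothing to add.
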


\begin{proof} For every $\eps>0$, Theorem~\ref{thm-choquet} provides us with an open set
$G$ such that $G \cup b_{p}E = X$ and $C_{p}(E \cap G) < \eps$.
Then $E\setminus b_pE \subset E\cap G$, and therefore $C_p(E\setminus b_pE)<\eps$.
Letting $\eps\to0$ concludes the proof.
\end{proof}

\begin{remark}\label{twokelloggs}
Let $\Omega\subset X$ be a bounded open set with $\Cp(X \setm \Om)>0$.
Choosing $E=X \setm \Omega$ in Corollary~\ref{cor-kellogg} gives
\begin{equation}\label{eq-two-kell}
C_p(\partial\Omega\setminus b_p(X \setm \Omega))\le C_p((X \setm \Omega)\setminus b_p(X \setm \Omega))=0.
\end{equation}
On the other hand,
a boundary point $x_0\in\partial\Omega$ is regular
(both for \p-harmonic functions defined through upper gradients
and for Cheeger \p-harmonic functions)
whenever
$X \setm \Omega$ is thick at $x_0$,
by the sufficiency part of the Wiener criterion,
see Bj\"orn--MacManus--Shanmugalingam~\cite{BMS} and
J.~Bj\"orn~\cite{JB-Matsue}, \cite{JB-pfine} (or Theorem~11.24 in \cite{BBbook}).
Hence \eqref{eq-two-kell} yields that
 the set of irregular boundary points of $\Om$ is of capacity zero.
This result was obtained
by a different method
(and called the Kellogg property) in
Bj\"orn--Bj\"orn--Shanmugalingam~\cite[Theorem~3.9]{BBS}.

To clarify that the above
proof of the Kellogg property is not using
circular reasoning let us explain
how the results we use here
are obtained in \cite{BBbook}.
Here we only need results up to Chapter~9 therein plus the results
in Sections~11.4 and~11.6.
They in turn only rely on results
up to Chapter~9 plus the implication (b) $\imp$ (a) in Theorem~10.29,
which can easily be obtained just using comparison.
Hence we are not relying on the Kellogg property obtained
in Section~10.2 in \cite{BBbook}.
\end{remark}

\section{Finely open and quasiopen sets}
\label{sect-finely-quasi}

We start this section by
using the Choquet property to prove Theorem~\ref{thm-finelyopen-quasiopen},
i.e.\ we  characterize quasiopen sets
and quasicontinuity by means of the
corresponding fine topological notions.
We then proceed by giving several immediate applications of this characterization.

Note that if $\Cp(\{x\})=0$,
then $\{x\}$ is quasiopen, but not finely open.
Thus the zero capacity set in  Theorem~\ref{thm-finelyopen-quasiopen}\,(a)
cannot be dropped.

\begin{proof}[Proof of Theorem~\ref{thm-finelyopen-quasiopen}]

 (a) That each quasiopen set $U$ is of the form $U=V \cup E$ for some
  finely open set $V$ and for a set $E$ of capacity zero,
was recently shown in
Bj\"orn--Bj\"orn--Latvala~\cite[Theorem~4.9]{BBL1}.

For the converse, assume that $U=V \cup E$, where $V$ is finely open and $\Cp(E)=0$.
Let $\eps>0$. By the Choquet property (Theorem~\ref{thm-choquet}), applied to $X \setm V$,
there is an open set $G$ such that
\[
     G \cup b_p(X \setm V) = X
     \quad \text{and} \quad
     \Cp(G \setm V) < \eps.
\]
The capacity $\Cp$ is an outer capacity,
by Corollary~1.3 in Bj\"orn--Bj\"orn--Shan\-mu\-ga\-lin\-gam~\cite{BBS5}
(or Theorem~5.31 in \cite{BBbook}),
so there is an open set $\widetilde G \supset (G \setm V) \cup E$ such that
$\Cp(\widetilde G)< \eps$.
Since $V$ is finely open, we have $V\subset X\setm b_p(X\setm V)\subset G$, and thus
$
    U \cup \widetilde G = V \cup \widetilde G =G \cup \widetilde G
$
is open, i.e.\ $U$ is quasiopen.

(b) If $u$ is quasicontinuous,
then it is finite q.e., by definition,
and finely continuous q.e., by
Theorem~4.9 in Bj\"orn--Bj\"orn--Latvala~\cite{BBL1}.

Conversely, assume that there is a set $Z$ with $\Cp(Z)=0$ such that $u$ is finite and finely continuous on $V:=U\setminus Z$.
By (a), we can assume that $V$ is finely open.
Let $\eps>0$ and let $\{(a_j,b_j)\}_{j=1}^\infty$
be an enumeration of all open intervals with rational endpoints and set
\[
   V_j:=\{x \in V:a_j<u(x)<b_j\}.
\]
By the fine continuity of $u$, the sets $V_j$ are finely open.
Hence by (a), $V_j$ are quasiopen, and thus
there are open sets $G_j$ and $G_{U}$ with
$\Cp(G_j)<2^{-j}\eps$ and $\Cp(G_{U})<\eps$ such that $V_j\cup G_j$ and
$U\cup G_{U}$ are open. Also, as $\Cp$ is an outer capacity,
there is an open set $G_Z\supset Z$ with $\Cp(G_Z)<\eps$.
Then
\[
G:=G_Z\cup G_{U}\cup\bigcup_{j=1}^{\infty}G_j
\]
is open, $\Cp(G)<3\eps$, and
$u|_{U \setm G}$ is continuous since $V_j\cup G$ are open sets.
\end{proof}

Theorem~\ref{thm-finelyopen-quasiopen} leads directly to the following
improvements of the results in Bj\"orn--Bj\"orn~\cite{BBnonopen}.

\begin{cor}  \label{cor-finely-open=>p-path}
\label{lem-quasiopen-measurable}
Every finely open set is quasiopen, measurable and \p-path open.
\end{cor}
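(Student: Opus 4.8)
The plan is to deduce all three properties from quasiopenness, which now comes for free. If $U\subset X$ is finely open, then $U=U\cup\emptyset$ exhibits $U$ in the form demanded by Theorem~\ref{thm-finelyopen-quasiopen}\,(a): a finely open set (namely $U$ itself) united with a set of capacity zero (namely $\emptyset$). Hence $U$ is quasiopen, and it only remains to recall that \emph{every quasiopen set is measurable and \p-path open}. Both of these are standard facts about quasiopen sets; I would either invoke them directly (the \p-path openness is essentially one of the main results of Bj\"orn--Bj\"orn--Mal\'y~\cite{BBM}) or argue as sketched below.

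For measurability, given a quasiopen $U$ I would choose for each $n$ an open set $G_n$ with $\Cp(G_n)<1/n$ and $U\cup G_n$ open, and set $A=\bigcap_n(U\cup G_n)$. Then $A$ is a $G_\delta$-set, hence Borel, and $U\subset A$ with $A\setm U\subset\bigcap_n G_n$. Since $\Cp(\bigcap_n G_n)\le\inf_n\Cp(G_n)=0$, the set $A\setm U$ has zero capacity, and therefore zero $\mu$-measure. As $\mu$ is complete, $U=A\setm(A\setm U)$ is $\mu$-measurable.

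For \p-path openness, the argument I would use rests on the absolute continuity of Newtonian functions along \p-almost every curve. Choose open sets $G_j$ with $\Cp(G_j)<4^{-j}$ and $U\cup G_j$ open, pick $0\le u_j\in\Np(X)$ with $u_j\ge1$ on $G_j$ and $\|u_j\|_{\Np(X)}^p<4^{-j}$, and put $v=\sum_j u_j$, which lies in $\Np(X)$ since $\sum_j\|u_j\|_{\Np(X)}<\infty$. Then for \p-almost every curve $\gamma\colon[0,l_\gamma]\to X$ the functions $v\circ\gamma$ and all $u_j\circ\gamma$ are real-valued and continuous; here one uses that a countable union of curve families of zero \p-modulus again has zero \p-modulus. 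Fix such a $\gamma$ and a point $t_0$ with $\gamma(t_0)\in U$. Choosing $N$ so large that $\sum_{j\ge N}u_j(\gamma(t_0))<1$ and using the continuity of $v\circ\gamma-\sum_{j<N}u_j\circ\gamma$ at $t_0$, we get $\delta>0$ with $\sum_{j\ge N}u_j(\gamma(t))<1$, and hence $\gamma(t)\notin\bigcup_{j\ge N}G_j$, for $|t-t_0|<\delta$. Fixing any $j_0\ge N$ and using that $U\cup G_{j_0}$ is open and that $\gamma$ is continuous, we find $0<\delta'\le\delta$ with $\gamma(t)\in U\cup G_{j_0}$, and therefore $\gamma(t)\in U$, for $|t-t_0|<\delta'$. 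Thus $\gamma^{-1}(U)$ is relatively open in $[0,l_\gamma]$, i.e.\ $U$ is \p-path open.

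There is no real obstacle here: the substance of the statement --- that a finely open set is quasiopen --- is precisely Theorem~\ref{thm-finelyopen-quasiopen}\,(a), which we may assume. The only point requiring a separate argument is the \p-path openness of quasiopen sets, and even there the mechanism is simply that membership of $v$ in $\Np(X)$ lets us control the entire tail $\sum_{j\ge N}u_j$ at once along \p-almost every curve.
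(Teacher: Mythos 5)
Your argument is correct and follows the same route as the paper: quasiopenness is immediate from Theorem~\ref{thm-finelyopen-quasiopen}\,(a) applied with $E=\emptyset$, and measurability and \p-path openness are then properties of quasiopen sets. The paper simply cites Remark~3.5 in Shanmugalingam~\cite{Sh-harm} and Lemma~9.3 in \cite{BBnonopen} for these two facts, whereas you prove them; your self-contained arguments (the $G_\delta$-modification plus completeness of $\mu$ for measurability, and the summed potentials controlled along \p-almost every curve for \p-path openness) are the standard proofs and are sound.
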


A set $U$ is \emph{\p-path open} if for
\p-almost every curve $\ga:[0,l_\ga]\to X$, the set
$\ga^{-1}(U)$ is (relatively) open in $[0,l_\ga]$.

\begin{proof}
By Theorem~\ref{thm-finelyopen-quasiopen}\,(a)
every finely open set is quasiopen.
Hence the result follows from Remark~3.5
in Shanmugalingam~\cite{Sh-harm} and
Lemma~9.3 in \cite{BBnonopen}.
\end{proof}

An important consequence
is that the restriction
of a minimal \p-weak upper gradient to a finely open set
remains minimal.
This was shown for measurable \p-path open
sets in \cite[Corollary~3.7]{BBnonopen}.
We restate this result, in view of Corollary~\ref{cor-finely-open=>p-path}.
In order to do so in full generality, we need to introduce some
more notation.

We define the Dirichlet space
\[
   \Dp(X)=\{u : u \text{ is measurable and $u$ has an upper gradient
     in }   L^p(X)\}.
\]
As with $\Np(X)$ we assume that functions
in $\Dp(X)$  are defined everywhere (with values in $\eR:=[-\infty,\infty]$).
For a measurable set $E\subset X$,
the spaces $\Dp(E)$ and $\Dploc(E)$ are defined similarly.
For $u \in \Dploc(E)$ we denote the minimal
\p-weak upper gradient of $u$ taken with  $E$
as the underlying space by $g_{u,E}$. Its existence is guaranteed by Theorem~2.25 in~\cite{BBbook}.

\begin{cor} \label{cor-minimal-restrict}
Let $U$ be quasiopen and $u \in \Dploc(X)$.
Then $g_{u,U}=g_u$ a.e.\ in $U$.
In particular this holds if $U$ is finely open.
\end{cor}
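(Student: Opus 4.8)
The plan is to reduce Corollary~\ref{cor-minimal-restrict} to the already-established case of measurable \p-path open sets, namely \cite[Corollary~3.7]{BBnonopen}. By Corollary~\ref{cor-finely-open=>p-path} (more precisely, by its ingredients: every quasiopen set is measurable and \p-path open), a quasiopen set $U$ is automatically measurable and \p-path open, so $\Dploc(U)$ makes sense and the restriction result is applicable. The last sentence of the statement (the special case of finely open $U$) is then immediate from Theorem~\ref{thm-finelyopen-quasiopen}\,(a), which says every finely open set is quasiopen.

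First I would recall why $g_{u,U} \le g_u|_U$ a.e.\ in $U$: if $g$ is a \p-weak upper gradient of $u$ on $X$, then its restriction to $U$ is a \p-weak upper gradient of $u|_U$ on $U$ (the exceptional curve families only shrink when we restrict the ambient space). Hence $g_{u,U} \le g_{u}$ a.e.\ in $U$, and this inequality needs nothing beyond the definitions. The content is the reverse inequality $g_{u,U} \ge g_u$ a.e.\ in $U$, i.e.\ that no information is lost by passing to the smaller space; this is exactly where \p-path openness of $U$ enters. The point is that for \p-almost every curve $\gamma$ in $X$, the preimage $\gamma^{-1}(U)$ is open in $[0,l_\gamma]$, so it is a countable disjoint union of open subintervals, and on each such subinterval $\gamma$ restricts to a curve lying entirely in $U$; one can then use the upper gradient inequality for $u|_U$ on these subcurves and piece together, together with absolute continuity of $u\circ\gamma$ along \p-a.e.\ curve, to see that $g_{u,U}$ (extended suitably) serves as a \p-weak upper gradient of $u$ near $U$. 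This is precisely the argument carried out in \cite[Corollary~3.7]{BBnonopen}, so I would simply invoke it rather than reproduce it.

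Thus the proof is short: given $u \in \Dploc(X)$ and $U$ quasiopen, apply Corollary~\ref{cor-finely-open=>p-path} to conclude $U$ is measurable and \p-path open, then apply \cite[Corollary~3.7]{BBnonopen} to obtain $g_{u,U}=g_u$ a.e.\ in $U$; finally, if $U$ is finely open, Theorem~\ref{thm-finelyopen-quasiopen}\,(a) gives that $U$ is quasiopen, so the conclusion follows. The only mild subtlety to flag is making sure the hypotheses match: \cite[Corollary~3.7]{BBnonopen} is stated for $u \in \Dploc(X)$ on a measurable \p-path open set, which is exactly our situation, and the localization ``$u \in \Dploc(X)$'' rather than ``$u \in \Dp(X)$'' causes no trouble since minimality of \p-weak upper gradients is a local property (by Theorem~2.25 in~\cite{BBbook}). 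I do not expect a genuine obstacle here; the work was done in establishing Corollary~\ref{cor-finely-open=>p-path} and the cited result from \cite{BBnonopen}, and the present corollary is just the clean packaging of those facts.
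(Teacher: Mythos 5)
Your proposal is correct and follows essentially the same route as the paper: both reduce to \cite[Corollary~3.7]{BBnonopen} by noting that quasiopen sets are measurable and \p-path open (Remark~3.5 in \cite{Sh-harm} and Lemma~9.3 in \cite{BBnonopen}, the ingredients behind Corollary~\ref{cor-finely-open=>p-path}) and that finely open sets are quasiopen by Theorem~\ref{thm-finelyopen-quasiopen}\,(a). Your added sketch of why \p-path openness is the key hypothesis in the cited result is accurate but not needed for the deduction.
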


\begin{proof} By Remark~3.5
in Shanmugalingam~\cite{Sh-harm} and
Lemma~9.3
in  \cite{BBnonopen} every quasiopen set is
\p-path open and measurable, whereas
Theorem~\ref{thm-finelyopen-quasiopen}\,(a)
shows that
every finely open set is quasiopen.
Hence the result follows from
Corollary~3.7 in \cite{BBnonopen}.
\end{proof}

In~\cite{BBnonopen}, the fine topology turned out to be important for obstacle
problems on nonopen measurable sets, i.e.\ when minimizing the \p-energy integral
\begin{equation}  \label{eq-min-p-energy}
\int_E g_{u,E}\,d\mu
\end{equation}
on an arbitrary bounded measurable set $E$ among all functions 
\[
u \in \K_{\psi_1,\psi_2,f}(E):=\{v \in \Dp(E) : v-f \in \Np_0(E)
   \text{ and } \psi_1 \le v \le \psi_2 \ \text{q.e. in } E\}.
\]
Knowing that finely open sets are measurable and \p-path open,
we are now able to improve
and simplify some of the results therein.
We summarize these improvements in the following theorem,
which follows directly from
\cite[Theorems~1.2 and 8.3, and Corollaries~3.7 and~7.4]{BBnonopen}
and Corollary~\ref{cor-finely-open=>p-path}.
We denote the fine interior of $E$ by $\fineint E$.

\begin{thm}
Let $E \subset X$ be a bounded measurable set such that $\Cp(X \setm E)>0$,
and let $f\in\Dp(E)$ and $\psi_j:E \to \eR$, $j=1,2$,  be such that
$\K_{\psi_1,\psi_2,f}(E)\ne \emptyset$. Also let $E_0=\fineint E$.

Then $\K_{\psi_1,\psi_2,f}(E) = \K_{\psi_1,\psi_2,f}(E_0)$, and the solutions
of the minimization problem for~\eqref{eq-min-p-energy}
with respect to $\K_{\psi_1,\psi_2,f}(E)$ and $\K_{\psi_1,\psi_2,f}(E_0)$ coincide.
Moreover, $g_{u,E_0}=g_{u,E}$ a.e.\ in $E_0$ and
if $\mu(E\setm E_0)=0$ then also the \p-energies associated
with these two minimization problems coincide.

Furthermore, if $f\in\Dp(\Om)$ for some open set\/ $\Om\supset E$, then
$g_{u,E_0}=g_{u,E}=g_u$ a.e.\ in $E_0$ and
the above solutions coincide with the solutions of the corresponding
$\K_{\psi'_1,\psi'_2,f}(\Om)$-obstacle problem,
where $\psi'_j$ is the extension of $\psi_j$ to $\Om\setm E$ by $f$, $j=1,2$.
\end{thm}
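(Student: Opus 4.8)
The plan is to quote the relevant results from \cite{BBnonopen} and merely record that the extra hypothesis there, ``$E$ measurable and \p-path open'', is now automatic in our two situations (when $E$ is finely open, or when the obstacle problem has been reduced to $E_0=\fineint E$). So the proof is mostly a matter of bookkeeping: identifying which theorem in \cite{BBnonopen} gives which assertion, and checking that the hypotheses are met.

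First I would note that $E_0=\fineint E$ is finely open, hence by Corollary~\ref{cor-finely-open=>p-path} it is measurable and \p-path open; being a subset of the bounded set $E$ it is also bounded, and since $\Cp(X\setm E)>0$ and $E\setm E_0$ has capacity zero (by the fine Kellogg property, Corollary~\ref{cor-kellogg}, applied to $X\setm E$, as $E\setm E_0$ consists of points of $E$ where $X\setm E$ is thick, i.e.\ where $E$ is thin from inside), we get $\Cp(X\setm E_0)>0$. Next, the identity $\K_{\psi_1,\psi_2,f}(E)=\K_{\psi_1,\psi_2,f}(E_0)$ is precisely Theorem~1.2 in \cite{BBnonopen} (the classes depend only on the fine interior because the complement $E\setm E_0$ has zero capacity, and membership in $\Np_0$ and the constraints are only required q.e.), and the coincidence of the minimizers for \eqref{eq-min-p-energy} over these two classes follows from the same theorem together with Corollary~3.7 in \cite{BBnonopen}, which gives $g_{u,E_0}=g_{u,E}$ a.e.\ in $E_0$; the latter is exactly our Corollary~\ref{cor-minimal-restrict} applied with the quasiopen set $E_0$ sitting inside the metric space $E$. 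The statement about the \p-energies being equal when $\mu(E\setm E_0)=0$ is then immediate, since then $\int_E g_{u,E}\,d\mu=\int_{E_0} g_{u,E}\,d\mu=\int_{E_0} g_{u,E_0}\,d\mu$.

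For the last paragraph, when $f\in\Dp(\Om)$ for an open $\Om\supset E$, I would invoke Theorem~8.3 and Corollary~7.4 in \cite{BBnonopen}: these say that, under the \p-path openness and measurability of $E_0$ (now guaranteed), the minimal \p-weak upper gradient relative to $E_0$ agrees with the global one, $g_{u,E_0}=g_u$ a.e.\ in $E_0$, and that the obstacle problem on $E_0$ can be transplanted to the open set $\Om$ with the obstacles extended by $f$, the solutions coinciding a.e. Combining with the first part gives the full chain $g_{u,E_0}=g_{u,E}=g_u$ a.e.\ in $E_0$ and the asserted identification of solutions.

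The only genuine point requiring care — and the step I expect to be the mild ``obstacle'' — is verifying that the hypotheses of the cited results from \cite{BBnonopen} are met in the generality stated here, in particular that $\Cp(X\setm E_0)>0$ (which needs the fine Kellogg property to control $E\setm E_0$) and that $E_0$ inherits boundedness and measurability; once these are in place the result is a direct consequence of \cite[Theorems~1.2 and 8.3, and Corollaries~3.7 and~7.4]{BBnonopen} together with Corollary~\ref{cor-finely-open=>p-path}, so no new estimates are needed.
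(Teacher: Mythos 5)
Your overall strategy---reduce everything to \cite[Theorems~1.2 and 8.3, and Corollaries~3.7 and~7.4]{BBnonopen} together with Corollary~\ref{cor-finely-open=>p-path}---is exactly what the paper does; it offers no argument beyond these citations. However, the one piece of ``verification'' you single out as the delicate step contains a genuine error: the claim that $\Cp(E\setm E_0)=0$ is false in general, and the fine Kellogg property does not yield it. Corollary~\ref{cor-kellogg} applied to $A=X\setm E$ controls the points \emph{of $X\setm E$} at which $X\setm E$ is thin, whereas $E\setm E_0$ consists of points \emph{of $E$} at which $X\setm E$ is thick; these are entirely different sets. Concretely, if $E=\itoverline{B}$ is a closed ball in $\R^n$, then $X\setm E$ is thick at every point of $\bdy B$, so $E\setm\fineint E\supset\bdy B$ has positive capacity---indeed the theorem's separate hypothesis $\mu(E\setm E_0)=0$ for the equality of energies exists precisely because $E\setm E_0$ can even have positive measure.

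Both places where you use this false claim can be repaired. The hypothesis $\Cp(X\setm E_0)>0$ follows at once from monotonicity, since $E_0\subset E$ gives $X\setm E\subset X\setm E_0$. And the identity $\K_{\psi_1,\psi_2,f}(E)=\K_{\psi_1,\psi_2,f}(E_0)$ is not a triviality about two sets differing by a set of capacity zero: it is the actual (nontrivial) content of Theorem~1.2 in \cite{BBnonopen}, which rests on the fact that $\Np_0(E)=\Np_0(\fineint E)$ (cf.\ Proposition~\ref{prop-char-Np0-qe-bdry}), so it should be cited as such rather than ``explained''. A minor further quibble: Corollary~\ref{cor-minimal-restrict} is stated with ambient space $X$, which carries the standing completeness, doubling and Poincar\'e assumptions; $E$ viewed as a metric space in its own right need not satisfy these, so the correct reference for $g_{u,E_0}=g_{u,E}$ a.e.\ in $E_0$ is Corollary~3.7 of \cite{BBnonopen} (applied to the measurable \p-path open subset $E_0$ of $E$), which you do also give.
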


We also obtain the following consequence of Lemma~3.9
and Theorem~7.3 in \cite{BBnonopen}, which generalizes
Theorem~2.147 and Corollary~2.162 in Mal\'y--Ziemer~\cite{MZ}
to metric spaces and to arbitrary sets.
See also Remark~2.148 in~\cite{MZ} for another description of
$W^{1,p}_0(\Om)$ in $\R^n$.

\begin{prop}   \label{prop-char-Np0-qe-bdry}
\textup{(Cf.\ Proposition~9.4 in \cite{BBnonopen}.)}
Let $E\subset X$ be arbitrary and $u\in\Np(\clEp)$,
where $\clEp$ is the fine closure of $E$.
Then $u\in\Np_0(E)$ if and only if $u=0$ q.e.\ on the fine boundary
$\bdy_pE :=\itoverline{E}^p\setm\fineint E$ of $E$.
\end{prop}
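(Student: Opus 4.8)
The plan is to reduce the statement, using Corollary~\ref{cor-finely-open=>p-path}, to Lemma~3.9 and Theorem~7.3 in \cite{BBnonopen}. Write $F=\clEp$ for the fine closure and $E_0=\fineint E$ for the fine interior of $E$, so that $E_0\subset E\subset F$, $\itoverline{E_0}^p\subset F$ and $\bdy_p E=F\setm E_0$. Since $X\setm F$ and $E_0$ are finely open, Corollary~\ref{cor-finely-open=>p-path} shows that $E_0$ is quasiopen and that both $E_0$ and $F$ are measurable, so that $\Np(F)$, $\Np(E_0)$, $\Np_0(E_0)$ and $\Np_0(E)$ are all meaningful and, by quasicontinuity of Newtonian functions on quasiopen sets (cf.\ \cite{BBM}), every function in $\Np(E_0)$ is finely continuous q.e.\ in $E_0$.

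The first step will be the reduction: $u|_E\in\Np_0(E)$ if and only if $u|_{E_0}\in\Np_0(E_0)$ and $u=0$ q.e.\ in $E\setm E_0$. For the forward direction one extends $u|_E$ by zero to $\hat u\in\Np(X)$; then $\hat u=0$ q.e.\ in $X\setm E$ and $\hat u$ is finely continuous q.e., by Theorem~4.9 in \cite{BBL1}. Each point of $E\setm E_0$ lies in the fine closure of $X\setm E$, because $X\setm E$ is thick (that is, not thin) there, so fine continuity forces $\hat u=0$ q.e.\ in $E\setm E_0$; since $\hat u=u$ on $E$ this gives $u=0$ q.e.\ in $E\setm E_0$, and hence also $\hat u=0$ q.e.\ in $X\setm E_0$, i.e.\ $u|_{E_0}\in\Np_0(E_0)$. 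Conversely, if the two conditions hold, then the zero extension of $u|_{E_0}$ and the function equal to $u$ on $E$ and to $0$ on $X\setm E$ agree outside a set of capacity zero, and a function which equals a Newtonian function q.e.\ is itself Newtonian, so $u|_E\in\Np_0(E)$.

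Granted the reduction, the implication ``$u=0$ q.e.\ on $\bdy_p E\ \Rightarrow\ u|_E\in\Np_0(E)$'' is immediate: by Theorem~7.3 in \cite{BBnonopen}, applied to the finely open (hence quasiopen) set $E_0$ and to $u|_{\itoverline{E_0}^p}\in\Np(\itoverline{E_0}^p)$, the vanishing of $u$ q.e.\ on $\bdy_p E_0=\itoverline{E_0}^p\setm E_0$ gives $u|_{E_0}\in\Np_0(E_0)$, while $u=0$ q.e.\ on $E\setm E_0$; since $\bdy_p E_0\subset\bdy_p E$ and $E\setm E_0\subset\bdy_p E$, the hypothesis supplies both conditions and the reduction applies. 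For the converse, the reduction together with Theorem~7.3 in \cite{BBnonopen} already yields $u=0$ q.e.\ on $(E\setm E_0)\cup(\itoverline{E_0}^p\setm E_0)$, and since $F=\itoverline{E_0}^p\cup\itoverline{E\setm E_0}^p$ what remains is to show $u=0$ q.e.\ on $\itoverline{E\setm E_0}^p\setm E\subset F\setm E$. For this one needs that the given $u\in\Np(F)$ agrees q.e.\ on $F$ with the restriction of $\hat u$ — the two coincide on $E$, which is finely dense in $F$, and $\hat u=0$ q.e.\ on $X\setm E\supset F\setm E$ — and this is precisely where Lemma~3.9 in \cite{BBnonopen} enters, reducing the identification to a measurable set and thereby also disposing of the non-measurable case.

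I expect this last step to be the main obstacle: forcing the prescribed values of $u$ on the ``outer'' fine boundary $\clEp\setm E$ to vanish when $E$ is a completely arbitrary, possibly non-measurable, set. Everything else is bookkeeping within the fine topology together with the quasicontinuity and fine continuity of Newtonian functions already available from \cite{BBL1}, \cite{BBM} and Corollary~\ref{cor-finely-open=>p-path}.
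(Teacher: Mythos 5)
The paper's own ``proof'' of this proposition is a single sentence: it observes that, by Corollary~\ref{cor-finely-open=>p-path}, the fine interior of any set is now known to be measurable and \p-path open, so that Lemma~3.9 and Theorem~7.3 of \cite{BBnonopen} (i.e.\ Proposition~9.4 there) apply. Your overall strategy---reduce to $E_0=\fineint E$, which is quasiopen, invoke the companion paper's results there, and then handle the remaining piece of the fine boundary---is therefore the intended one. Your reduction lemma and the whole ``if'' direction are sound: the forward half of the reduction is exactly the argument via quasicontinuity of the zero extension $\hat u\in\Np(X)$ and its q.e.\ fine continuity (Theorem~\ref{thm-finelyopen-quasiopen}\,(b)), combined with the thickness of $X\setm E$ at points of $E\setm E_0$.

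The gap is in the last step of the ``only if'' direction, precisely the step you single out as the main obstacle. To get $u=0$ q.e.\ on $\clEp\setm E$ you claim that $u$ and $\hat u$ agree q.e.\ on $\clEp$ because they agree on $E$, which is finely dense in $\clEp$. Fine density alone does not yield this. The set $\clEp$ is finely closed---not open, and in general not quasiopen---so none of the quasicontinuity or q.e.-fine-continuity results available here (\cite{BBS5}, Theorem~\ref{thm-finelyopen-quasiopen}, \cite{BBM}) apply to members of $\Np(\clEp)$, and two functions in $\Np(\clEp)$ that agree on a finely dense subset need not agree q.e.; indeed, when $\clEp\setm E$ carries little or no measure, the condition $u\in\Np(\clEp)$ constrains $u$ off $E$ only through the upper-gradient inequality along curves lying in $\clEp$, and there may be too few such curves for fine density to propagate anything. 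What is needed here is the precise content of Lemma~3.9 in \cite{BBnonopen}, which you invoke without stating; as written, ``fine density plus Lemma~3.9'' is a placeholder rather than a proof of this step. Everything else in your argument is correct.
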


\section{Support of capacitary measures}
\label{sect-supp-cap-meas}

We can now bootstrap
Lemma~\ref{lem-v-u-E-nu-equal} to quasiopen sets
and in particular show that the capacitary measure $\nu_E$ only charges the fine
boundary $\bdry_p E:= \clEp \setm \fineint E$ of $E$, where $\clEp$ is the fine closure of $E$.
This observation seems to be new even in unweighted $\R^n$.

\begin{thm}  \label{thm-bootstrap}
Let $\Om$ be a nonempty bounded open set with $\Cp(X \setm \Om)>0$.
Let $E\subset\Om$, $u_E$ and $\nu_E=Tu_E$ be as in
Lemma~\ref{lem-v-u-E-nu-equal}.
Let $U\subset\Om$ be quasiopen and $v\in\Np(\Om)$.
Then the following are true\/\textup{:}
\begin{enumerate}
\item \label{bootstrap-a}
If $u \in \Np(\Om)$ and either $u$ is bounded from below or belongs to $L^1(\nu_E)$,
and $u=v$ q.e.\ in $U \cap E$, then
\begin{equation} \label{eq-bootstrap}
  \int_U u \,d\nu_E = \int_U v\,d\nu_E.
\end{equation}
\item \label{bootstrap-b}
If $v=1$ q.e.\ in $U\cap E$, then
\[
\nu_E(U)=\int_U v \,d\nu_E = \int_U u_E\,d\nu_E.
\]
\item \label{bootstrap-c}
If $\Cp(U\cap E)=0$, then
$\nu_E(U)=0$.
\end{enumerate}
\end{thm}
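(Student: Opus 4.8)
The plan is to prove the three parts in order, with part~\eqref{bootstrap-a} doing the real work and the other two following almost immediately. The overall strategy is to reduce the statement about a quasiopen set $U$ to the statement about an open set, which we already know from Lemma~\ref{lem-v-u-E-nu-equal}, by exhausting $U$ from inside by open sets up to sets of capacity zero and controlling the error using the fact that $\nu_E$ does not charge sets of capacity zero (since $\nu_E\in\Np_0(\Om)'$). First I would record this last fact: if $\Cp(Z)=0$ then $\nu_E(Z)=0$, because a Radon measure in the dual $\Np_0(\Om)'$ cannot charge a set that can be covered by an open set of arbitrarily small capacity; more concretely, if $\Cp(Z)=0$ there are $\phi_i\in\Np_0(\Om)$ with $\phi_i\ge1$ on a neighbourhood of $Z$ and $\|\phi_i\|\to0$, so $\nu_E(Z)\le\int\phi_i\,d\nu_E=\nu_E(\phi_i)\le\|\nu_E\|\|\phi_i\|\to0$.

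For part~\eqref{bootstrap-a}, the key step is to write $U=W\cup Z$ where $W$ is finely open and $\Cp(Z)=0$, using Theorem~\ref{thm-finelyopen-quasiopen}\,(a). Since $\nu_E(Z)=0$ and similarly integrals over $Z$ vanish (both $u$ and $v$ are in $L^1(\nu_E)$ after the reduction; for bounded-from-below $u$ one first replaces $u$ by $u-c$ and notes $\nu_E(\Om)<\infty$), it suffices to prove \eqref{eq-bootstrap} with $U$ replaced by the finely open set $W$. Now a finely open set is quasiopen, hence $\mathbf{p}$-path open and measurable by Corollary~\ref{cor-finely-open=>p-path}, but more to the point I want to approximate $W$ from inside by open sets. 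Here I would invoke the Choquet property in the form of Theorem~\ref{thm-finelyopen-quasiopen}\,(a)'s proof technique: since $W$ is quasiopen, for each $\eps>0$ there is an open $G$ with $\Cp(G)<\eps$ and $W\cup G$ open; then $G_\eps:=(W\cup G)$ is open, $W\subset G_\eps$, and $\Cp(G_\eps\setm W)\le\Cp(G)<\eps$. Apply Lemma~\ref{lem-v-u-E-nu-equal} on the open set $G_\eps$ — but wait, the hypothesis there is $v=1$ q.e., whereas here we only have $u=v$ q.e.\ on $U\cap E$. The fix is that Lemma~\ref{lem-v-u-E-nu-equal} is really a statement about $w:=u-v\in\Np(\Om)$ bounded with $w=0$ q.e.\ on $U\cap E$, and its proof (testing \eqref{eq-phi-nu-0} with $\eta(u-v)$ for $\eta\in\Lip_c$) shows $\int_{G_\eps}(u-v)\,d\nu_E=\int_{G_\eps}(u-v)\cdot 0$ whenever $u-v\in\Np_0(\Om\setm E)$ localized; so I should first reprove the natural generalization of Lemma~\ref{lem-v-u-E-nu-equal}: if $u-v$ vanishes q.e.\ on $G\cap E$ for an open $G$, then $\int_G u\,d\nu_E=\int_G v\,d\nu_E$, via $\eta(u-v)\in\Np_0(\Om\setm E)$ and \eqref{eq-phi-nu-0}. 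Then letting $G=G_\eps$ and $\eps\to0$, and using that $\int_{G_\eps\setm W}(u-v)\,d\nu_E\to0$ by dominated convergence (the integrand is $\nu_E$-integrable and $\nu_E(G_\eps\setm W)\to0$), yields \eqref{eq-bootstrap} on $W$, hence on $U$.

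Part~\eqref{bootstrap-b} is then immediate: apply part~\eqref{bootstrap-a} first with $u=u_E$ (note $0\le u_E\le1$, so $u_E$ is bounded, and $u_E=1$ q.e.\ on $E\supset U\cap E$ need not hold, but $u_E=v$ q.e.\ on $U\cap E$ does since both equal $1$ there) to get $\int_U u_E\,d\nu_E=\int_U v\,d\nu_E$, and then with $v$ replaced by the constant $1$ (bounded, and $=v$ q.e.\ on $U\cap E$) to get $\int_U v\,d\nu_E=\int_U 1\,d\nu_E=\nu_E(U)$. Part~\eqref{bootstrap-c}: if $\Cp(U\cap E)=0$, then taking $v\equiv1$, the hypothesis ``$v=1$ q.e.\ in $U\cap E$'' holds vacuously, and also ``$v=0$ q.e.\ in $U\cap E$'' holds with $v\equiv0$; applying part~\eqref{bootstrap-a} with $u=\chi$-type competitors, or more simply applying part~\eqref{bootstrap-b} with both $v\equiv1$ and $v\equiv0$, forces $\nu_E(U)=\int_U 1\,d\nu_E=\int_U 0\,d\nu_E=0$.

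The main obstacle I anticipate is the inside approximation of the quasiopen (or finely open) set $U$ by open sets together with the integrability bookkeeping. One must be careful that the error sets $G_\eps\setm U$ have small capacity \emph{and} that the functions $u,v$ are genuinely $\nu_E$-integrable so that dominated convergence applies to $\int_{G_\eps\setm U}(u-v)\,d\nu_E\to0$; for the ``bounded from below'' case this requires first subtracting a constant and using $\nu_E(\Om)=\int_\Om u_E\,d\nu_E<\infty$ from Lemma~\ref{lem-v-u-E-nu-equal}, and then handling $\int(u-c)\,d\nu_E$ as a limit of truncations. The other subtlety is the generalized version of Lemma~\ref{lem-v-u-E-nu-equal} for open sets with the weaker hypothesis $u=v$ q.e.\ on $G\cap E$ rather than $v=1$; but this is a routine rerun of that lemma's proof with $u-v$ in place of $v-u_E$, since what the proof actually uses is $\eta(u-v)\in\Np_0(\Om\setm E)$.
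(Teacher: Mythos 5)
Your reduction of parts \ref{bootstrap-b} and \ref{bootstrap-c} to part \ref{bootstrap-a}, and your truncation scheme for the unbounded cases, match the paper. The central step of \ref{bootstrap-a}, however, has a genuine gap. After writing $U=W\cup Z$ with $W$ finely open and $\Cp(Z)=0$, you approximate $W$ \emph{from outside} by open sets $G_\eps=W\cup G$ with $\Cp(G_\eps\setm W)<\eps$ and then apply your generalized Lemma~\ref{lem-v-u-E-nu-equal} on $G_\eps$. That generalization needs $u=v$ q.e.\ on $G_\eps\cap E$, because its proof tests \eqref{eq-phi-nu-0} with $\eta(u-v)$ for $\eta\in\Lipc(G_\eps)$, and this test function lies in $\Np_0(\Om\setm E)$ only if $u-v$ vanishes q.e.\ on $\supp\eta\cap E$. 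But the hypothesis only gives $u=v$ q.e.\ on $U\cap E$; the excess set $(G_\eps\setm U)\cap E$ has capacity less than $\eps$ but in general \emph{positive} capacity, and nothing is known about $u-v$ there. So the step ``apply the lemma on $G_\eps$'' fails as stated. (A secondary point: $\nu_E(G_\eps\setm W)\to0$ does not follow from null-set absolute continuity alone, though it can be arranged by nesting the $G_\eps$ or by the quantitative bound $\int\phi\,d\nu_E\le\|\nu_E\|_{\Np_0(\Om)'}\|\phi\|$.)

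The underlying obstruction is that a finely open set need not contain any nonempty open set (e.g.\ $B\setm\Q^n$ for $p\le n$), so there is no inner approximation by open sets, while outer approximation imports precisely the bad set on which the hypothesis is unavailable. The paper's proof therefore builds an \emph{inner} exhaustion by Newtonian cutoffs rather than by open sets: the Cartan property, through Lemma~\ref{lem-finely-open-x}, gives for each $x$ in the finely open part $V$ of $U$ a function $v_x\in\Np_0(V)$ equal to $1$ on a finely open $V_x\Subset V$ containing $x$; the quasi-Lindel\"of principle (Theorem~\ref{thm-quasiLindelof}) extracts countably many so that $U=\bigcup_j V_j\cup Z$ with $\Cp(Z)=0$; and then $\eta_k=\chi_{X\setm Z}\max_{j\le k}v_j\in\Np_0(U)$ satisfies $\eta_k(u-v)\in\Np_0(U\setm E)$ --- here the hypothesis is used only on $U\cap E$, where it holds --- and $\eta_k\nearrow\chi_{U\setm Z}$. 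If you wish to salvage the outer-approximation route, you must additionally suppress the contribution of $(G_\eps\setm U)\cap E$, e.g.\ by multiplying the test function by $1-\min\{w_\eps,1\}$ for a capacitary potential $w_\eps$ of that small set and estimating $\int\min\{w_\eps,1\}\,d\nu_E$ via the $\Np_0(\Om)'$-norm of $\nu_E$; as written, that argument is missing.
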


\begin{remark}
We shall see in Corollary~\ref{cor-bootstrap} below that the set $U\cap E$ in
\ref{bootstrap-a}, \ref{bootstrap-b} and \ref{bootstrap-c}
above can be replaced  by $U\cap\bdry_p E$ and that
the assumption $v\in\Np(\Om)$ in Theorem~\ref{thm-bootstrap} can be omitted
in that case.
\end{remark}

To prove Theorem~\ref{thm-bootstrap} we need the following quasi-Lindel\"of principle,
whose proof in unweighted $\R^n$ is given in Theorem~2.3 in
Heinonen--Kilpel\"ainen--Mal\'y~\cite{HeKiMaly}.
This proof, which relies on the fine Kellogg property, extends to metric spaces,
see Bj\"orn--Bj\"orn--Latvala~\cite{BBL3}.

\begin{thm} \label{thm-quasiLindelof}
\textup{(Quasi-Lindel\"of principle)}
 For each family $\mathcal V$ of finely open sets there is a countable subfamily $\mathcal V'$ such that
\[
\Cp\biggl(\bigcup_{V\in\mathcal V}V\setminus \bigcup_{V'\in\mathcal V'}V'\biggr)=0.
\]
\end{thm}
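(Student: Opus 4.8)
The plan is to run the standard Lindelöf-type argument for the fine topology, using the fine Kellogg property (Corollary~\ref{cor-kellogg-intro}) as the key non-topological input that makes the exceptional set have capacity zero rather than merely being small in the metric sense. Set $W=\bigcup_{V\in\mathcal V}V$. First I would use the fact that the metric topology has a countable base $\{B_i\}_{i=1}^\infty$ of balls (recall $X$ is separable). For each index $i$ such that $B_i\subset V$ for some $V\in\mathcal V$, pick one such $V=V_i\in\mathcal V$; let $\mathcal V'=\{V_i\}$, a countable subfamily, and set $W'=\bigcup_{V'\in\mathcal V'}V'$. Clearly $W'\subset W$, so it remains to bound $\Cp(W\setm W')$.

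The heart of the matter is the following claim: if $x\in W$ then either $x\in W'$ or $X\setm W'$ is thin at $x$ — equivalently, $W\setm W'\subset\{x\in X\setm W':X\setm W'\text{ is thin at }x\}$, up to the obvious rephrasing. To see this, take $x\in W$, so $x\in V$ for some $V\in\mathcal V$. Since $V$ is finely open, $X\setm V$ is thin at $x$; because thinness is a local, capacity-density condition, this gives a "fine neighbourhood" control of $V$ near $x$. Now among the metric balls $B_i$ there is no reason a single $B_i$ sits inside $V$, so the argument is not purely topological; instead I would argue that $X\setm W'$ is thin at $x$. Indeed $X\setm W'\subset X\setm W_0$ where $W_0$ is a suitable union of the chosen $B_i$'s, and one shows that $W_0$ contains a fine neighbourhood of $x$ whenever $V$ does, using that the $B_i$ form a base of the metric topology together with the definition of $V_i$. (This is the place where one must be careful: the correct statement, as in Heinonen--Kilpeläinen--Malý~\cite{HeKiMaly}, is that for every finely open $V\ni x$ one can find a basic ball $B_i$ with $x\in B_i\subset V$ \emph{provided $x$ is in the metric interior}, and at points which are only finely interior one instead uses that $V$ itself — being finely open and hence quasiopen by Theorem~\ref{thm-finelyopen-quasiopen}\,(a) — differs from an open set by a null set.) Thus every point of $W\setm W'$ is a point at which $W\setm W'$, or rather the larger set $X\setm W'$, fails to be finely "seen", and one concludes that $W\setm W'\subset\{x\in E:E\text{ is thin at }x\}$ for $E=X\setm W'$, possibly after discarding a capacity-zero set coming from the quasiopen-vs-open discrepancy.

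Finally, apply the fine Kellogg property (Corollary~\ref{cor-kellogg-intro}): $\Cp(\{x\in E:E\text{ is thin at }x\})=0$ for $E=X\setm W'$, and hence $\Cp(W\setm W')=0$, which is the assertion. The main obstacle, and the step deserving the most care, is the bridge between the \emph{metric} countable base and the \emph{fine} topology in the claim above: one cannot simply say "every finely open $V$ is a union of basic balls." The clean way around this, following \cite{HeKiMaly} and the metric adaptation in \cite{BBL3}, is to combine (i) the quasiopenness of finely open sets from Theorem~\ref{thm-finelyopen-quasiopen}\,(a) — write $V=V^{\mathrm{open}}\cup(\text{null set})$ where $V^{\mathrm{open}}\cup V$ is open — so that a genuine metric base can be used, at the cost of a capacity-zero error set, and (ii) the fine Kellogg property to absorb that error. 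All other steps (separability giving a countable base, countability of $\mathcal V'$, subadditivity of $\Cp$) are routine.
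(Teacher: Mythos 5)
You should first note that the paper itself does not prove Theorem~\ref{thm-quasiLindelof}: it only records that the proof of Theorem~2.3 in Heinonen--Kilpel\"ainen--Mal\'y~\cite{HeKiMaly} relies on the fine Kellogg property and extends to metric spaces, deferring details to~\cite{BBL3}. You have correctly identified the fine Kellogg property as the essential input, which matches the paper's remark. However, your construction of the countable subfamily is broken precisely at the step you yourself flag as delicate. Selecting $\mathcal V'$ by picking, for each basic metric ball $B_i$ contained in some member of $\mathcal V$, one such member fails because a finely open set need not contain any metric ball at all: in $\R^n$ with $1<p<n$, let $A=\bigcup_i \overline{B}(q_i,r_i)$ with $\{q_i\}$ dense in $B(0,1)$ and $r_i$ so small that $A$ is thin at every point and $\mu(A)<\mu(B(0,1))$; then $V=B(0,1)\setm A$ is finely open, has empty metric interior, and $\Cp(V)>0$. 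With $\mathcal V=\{V\}$ your recipe returns $\mathcal V'=\emptyset$, so $\Cp\bigl(\bigcup_{V\in\mathcal V}V\setm\bigcup_{V'\in\mathcal V'}V'\bigr)=\Cp(V)>0$.

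Your proposed repair rests on the claim that a finely open set, being quasiopen, ``differs from an open set by a null set'', i.e.\ $V=V^{\mathrm{open}}\cup N$ with $V^{\mathrm{open}}$ open and $\Cp(N)=0$. This is false and is not what Theorem~\ref{thm-finelyopen-quasiopen}\,(a) asserts: that result decomposes a quasiopen set as a \emph{finely} open set plus a capacity-null set, whereas quasiopenness only provides open sets $G$ of arbitrarily small \emph{positive} capacity with $V\cup G$ open; one cannot pass to a capacity-zero discrepancy (the same $V$ as above is a counterexample, since every open subset of $V$ is empty while $\Cp(V)>0$). Consequently the central claim of your argument --- that $X\setm W'$ is thin at every point of $W\setm W'$, so that the fine Kellogg property applies --- is never established; it is essentially equivalent to the statement being proved. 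The cited proof does not attempt to build the subfamily from a base of the fine topology (no countable base exists in general); it selects the countable subfamily by an extremal argument and only then invokes the fine Kellogg property to dispose of the exceptional set. As it stands, your argument has a genuine gap at its key step, and the two concrete claims it leans on (balls inside finely open sets; finely open equals open modulo a null set) are both false.
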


We also need the following lemmas.

\begin{lem} \label{lem-finely-open-x}
Let $U$ be finely open and let $x_0\in U$.
Then there exists a finely open set $V\Subset U$ containing $x_0$
and a function $v \in \Np_0(U)$ such that $v=1$ on $V$
and $0 \le v \le 1$ everywhere.
\end{lem}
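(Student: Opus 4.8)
The plan is to exploit the defining thinness condition for $X\setm U$ at $x_0$ together with a standard argument producing a capacitary potential which is $1$ on a small finely open neighbourhood of $x_0$ and whose Newtonian norm is controlled. Since $U$ is finely open, $X\setm U$ is thin at $x_0$, so by Definition~\ref{deff-thinness} the series
\[
\sum_{j=1}^{\infty}\biggl(\frac{\cp((X\setm U)\cap B_j,2B_j)}{\cp(B_j,2B_j)}\biggr)^{1/(p-1)}<\infty,
\]
where $B_j=B(x_0,2^{-j})$ (this discretization of the Wiener-type integral is routine and is already used in the proof of Theorem~\ref{thm-cartan}). First I would fix a small radius $r>0$ with $2B:=B(x_0,2r)\Subset X$ (possible since $X$ is proper), and let $F=(X\setm U)\cap B$, which is thin at $x_0$ with $x_0\notin F$. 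I then want a function $v\in\Np_0(B)$ with $0\le v\le 1$, $v=1$ in a fine neighbourhood of $x_0$, and $v=0$ on $F$ up to capacity zero, so that $\{v>\tfrac12\}\cap U$ is the required finely open set $V$ (after intersecting with a genuine metric ball to get $V\Subset U$).

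The construction I would use is the Cartan-potential construction: let $u_j$ be the Cheeger capacitary potential of $F_j:=F\cap B_j$ with respect to $B_{j-1}$, and set $w=\sum_j u_j$. As in the proof of Theorem~\ref{thm-cartan}, thinness of $F$ at $x_0$ gives $w(x_0)<\infty$ — in fact, by choosing the starting index large one can arrange $w(x_0)<\tfrac12$ — while $w\ge 1$ q.e.\ on $F$ near $x_0$ since each $u_j=1$ q.e.\ on $F_j$. Moreover $\|w\|_{\Np}$ is finite because $\sum_j\|u_j\|_{\Np}^p\le C\sum_j\cp(F_j,B_{j-1})$ is dominated by the Wiener sum via $\cp(B_j,B_{j-1})\simeq r_j^{-p}\mu(B_j)$ and $\cp(B_j,B_{j-1})\le\mu(B_j)/r_j^p$. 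Then $v:=(1-\min\{w,1\})_{+}$, suitably cut off by a Lipschitz function supported in $B$ that equals $1$ on $\tfrac12 B$, lies in $\Np_0(U)$: indeed $v=0$ q.e.\ on $F$ and $v=0$ outside $B$, and since $v$ vanishes q.e.\ on $(X\setm U)\cap B=F$ and on $X\setm B$, one checks $v=0$ q.e.\ on $X\setm U$, hence $v\in\Np_0(U)$ (using that the zero-set condition can be imposed q.e.). Finally, $v$ is finely continuous q.e.; after modifying on a capacity-zero set one gets $v$ finely continuous, and $V:=\{x\in U: v(x)>\tfrac12\}\cap\tfrac14 B$ is finely open, contains $x_0$ (as $v(x_0)=1-w(x_0)>\tfrac12$), satisfies $V\Subset U$ because $\overline{V}\subset\overline{\tfrac14 B}\Subset B\subset U$ fails in general — so instead I would take $V$ to be a suitable finely open subset of $\{v>\tfrac12\}$ whose fine closure is contained in $U$, obtained by intersecting with the metric ball $\tfrac14 B$ whose closure lies in the metric-open part; the key point is only that $V\Subset U$ in the stated sense.

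The main obstacle I anticipate is the requirement $V\Subset U$ (i.e.\ $\itoverline{V}$ compact and contained in $U$), since $U$ need not be metrically open, so a fine neighbourhood of $x_0$ need not have compact closure inside $U$. The resolution is that $x_0$ is an \emph{interior} fine point and the Cartan potential $w$ is built from the \emph{bounded} complement $F=(X\setm U)\cap B$ with $B\Subset X$ and $\itoverline{B}$ compact; then $\{v>\tfrac12\}\subset\tfrac12 B\Subset X$ automatically, while $\{v>\tfrac12\}\subset U$ up to capacity zero, and the capacity-zero exceptional set is harmless because it is absorbed in the "$=1$ on $V$" statement being required only on a slightly smaller finely open $V$. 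So the genuine work is (i) the norm estimate $\|w\|_{\Np}<\infty$ from the Wiener series, which is essentially a repetition of \eqref{eq-est-vint-uk}–\eqref{eq-est-Wolff-pot}, and (ii) verifying $v\in\Np_0(U)$, which follows from the q.e.-vanishing characterization of $\Np_0$ recalled in Section~\ref{sect-prelim}. Everything else is bookkeeping with the fine topology and the already-established fact (Corollary~\ref{cor-finely-open=>p-path}, or rather Theorem~\ref{thm-finelyopen-quasiopen}\,(a)) that Newtonian functions are finely continuous q.e.
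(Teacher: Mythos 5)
Your overall strategy (build a potential that is small at $x_0$, equals $1$ on $X\setm U$ near $x_0$, and subtract it from a cutoff) is the right one, but there are two genuine gaps. First, the construction $w=\sum_j u_j$ is both unnecessary and unjustified: for $w\in\Np$ you need something like $\sum_j\|u_j\|_{\Np(X)}<\infty$, i.e.\ $\sum_j\cp(F_j,B_{j-1})^{1/p}<\infty$ plus control of the $L^p$-parts, and none of this follows from the finiteness of the Wiener sum $\sum_j\bigl(\cp(F_j,B_{j-1})/\cp(B_j,B_{j-1})\bigr)^{1/(p-1)}$, since the denominators $\cp(B_j,B_{j-1})\simeq r_j^{-p}\mu(B_j)$ give no lower bound that converts the ratio condition into summability of the numerators; moreover in the nonlinear setting the sum of the $u_j$ is not a supersolution, and the pointwise bound $w(x_0)<\tfrac12$ would need a separate potential estimate for each $u_j$. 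The paper avoids all of this by simply invoking the already proved Cartan property (Theorem~\ref{thm-cartan}), whose proof supplies a \emph{single} lsc-regularized capacitary potential $u$ of an \emph{open} set containing $(X\setm U)\cap B$ (thinness is preserved when passing to an open superset), so that $0\le u\le1$, $u$ is lower semicontinuous, $u=1$ \emph{everywhere} (not just q.e.) on $X\setm U$ near $x_0$, and $u(x_0)<1$.

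Second, and more seriously, you correctly identify $V\Subset U$ (meaning $\itoverline{V}$ is a \emph{compact subset of} $U$) as the main obstacle but never resolve it: ``$\{v>\tfrac12\}\subset U$ up to capacity zero'' is not enough, because a capacity-zero portion of $\itoverline{V}$ meeting $X\setm U$ already violates the conclusion, and intersecting with a metric ball only controls compactness, not the inclusion of the closure in $U$. This is exactly where the ``everywhere'' and the semicontinuity above are needed. In the paper's proof one takes $\eta\in\Lipc(B)$ with $\eta=1$ on $\tfrac12B$ and sets $w:=\eta(1-u)$, which is \emph{upper} semicontinuous and vanishes identically on $(X\setm U)\cup(X\setm\supp\eta)$; hence for $c=\tfrac12w(x_0)>0$ the superlevel set $\{w\ge c\}$ is a closed subset of the compact set $\supp\eta$ and is contained in $U$, so $V:=\{x\in U:w(x)>c\}$ satisfies $\itoverline{V}\subset\{w\ge c\}\Subset U$, and $v:=\min\{1,2w/w(x_0)\}\in\Np_0(U)$ equals $1$ on $V$. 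Without arranging $u=1$ everywhere on $X\setm U$ near $x_0$ and tracking semicontinuity, your construction cannot produce the required compact containment.
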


\begin{proof}
Since $U$ is finely  open, $E:=X \setm U$ is thin at $x_0$.
By the Cartan property (Theorem~\ref{thm-cartan}),
there are  a ball $B \ni x_0$ and a lower semicontinuous
finely continuous
$u \in \Np(B)$ such that $0 \le u \le 1$ in $B$,
$u(x_0)<1$ and $u=1$ in
$E\cap B$.
Let $\eta\in\Lipc(B)$ be
such that $0\le\eta\le1$
in $B$ and $\eta=1$ in $\tfrac12 B$.
Then $w:=\eta(1-u)\in\Np_0(U)$ is upper semicontinuous and finely continuous
in $X$ and $w(x_0)=1-u(x_0)>0$.
Let $v=\min\{1,2w/w(x_0)\} \in \Np_0(U)$ and
$V=\bigl\{x \in U : w(x)> \tfrac{1}{2} w(x_0)\bigr\}$.
The fine continuity and upper semicontinuity of $w$ imply that $V$ is
finely open and $V\Subset U$. Moreover $x_0 \in V$ and $v=1$ on $V$.
\end{proof}

\begin{lemma} \label{lem-quasiopen-Borel}
Let $U\subset X$ be quasiopen. Then
\begin{equation} \label{eq-Borel}
U=W_1\cup E_1=W_2\setminus E_2,
\end{equation}
where $W_1$ and $W_2$ are Borel sets and $E_1$ and $E_2$ are of capacity zero.
Moreover, we may choose $W_1$ to be of type $F_\sigma$ and $W_2$ to be of type $G_\delta$.
\end{lemma}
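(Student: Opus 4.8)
The plan is to build the representations separately, starting from the definition of quasiopenness. For each $\eps = 2^{-j}$, pick an open set $G_j$ with $\Cp(G_j) < 2^{-j}$ such that $U \cup G_j$ is open; then $U \cup G_j$ is in particular Borel. Since $\Cp(\bigcap_{k=1}^\infty \bigcup_{j\ge k} G_j) \le \inf_k \sum_{j\ge k} 2^{-j} = 0$ by countable subadditivity, the set $E_2 := \bigcap_k \bigcup_{j\ge k} G_j$ has capacity zero. I would then take $W_2 := \bigcap_{k=1}^\infty \bigcup_{j \ge k}(U \cup G_j)$, which is a $G_\delta$ set as a countable intersection of open sets (each $\bigcup_{j\ge k}(U\cup G_j)$ being open). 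The inclusion $U \subset W_2$ is clear since $U \subset U \cup G_j$ for every $j$. For the reverse-ish containment: if $x \in W_2 \setminus U$, then $x \in U \cup G_j$ for all $j \ge k$ for some $k$, hence $x \in G_j$ for all such $j$, so $x \in E_2$. Thus $W_2 \setminus E_2 \subset U$, and combined with $U \subset W_2$ and $U \cap E_2 \subset$ (whatever) one checks $U = W_2 \setminus E_2$ after possibly enlarging $E_2$ to also contain $W_2 \cap E_2 \setminus U$; more cleanly, set $E_2 := W_2 \setminus U$, which we have just shown has capacity zero, and then $U = W_2 \setminus E_2$ holds by construction while $W_2$ is $G_\delta$.

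For the $F_\sigma$ representation, I would pass to complements. The natural move is to observe that $X \setminus U$ need not be quasiopen, so instead I apply the quasicontinuity machinery directly: write $X \setminus (U \cup G_j)$, which is closed, and consider $W_1' := \bigcup_{k=1}^\infty \bigcap_{j\ge k}(X \setminus (U \cup G_j))$, a countable union of closed sets, hence $F_\sigma$. A point $x$ lies in $W_1'$ iff eventually $x \notin U \cup G_j$, in particular $x \notin U$; and if $x \notin U$ but $x \notin W_1'$, then $x \in U \cup G_j$ for infinitely many $j$, forcing $x \in G_j$ infinitely often, i.e.\ $x \in E_2$. So $X \setminus U \subset W_1' \cup E_2$ with the reverse inclusion clear modulo the capacity-zero set $E_2$. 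Taking complements, $W_2' := X \setminus W_1'$ is $G_\delta$ and $U \subset W_2' \subset U \cup (\text{something capacity zero})$; this again gives a $G_\delta$ representation but not directly $F_\sigma$ for $U$ itself. To get $W_1$ of type $F_\sigma$ \emph{inside} $U$, I would instead use that $U$ is quasiopen together with the inner regularity of $\Cp$ on Borel sets (or simply: $U \cup G_j$ open $\Rightarrow$ $U = (U\cup G_j) \setminus G_j \supset (U \cup G_j) \setminus \widetilde G_j$ for $\widetilde G_j \supset G_j$ open with small capacity); taking $F_j := (U \cup G_j) \setminus \widetilde G_j$, an open set minus an open set need not be $F_\sigma$, so I would instead exhaust each open set $U \cup G_j$ from inside by closed sets and intersect away the $G_j$'s. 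Concretely: $W_1 := \bigcup_{j=1}^\infty \bigl( C_j \setminus G_j \bigr)$ where $C_j \subset U \cup G_j$ is closed — but $C_j \setminus G_j$ is closed-minus-open, hence a countable union of closed sets, so $W_1$ is $F_\sigma$; and $W_1 \subset U$ since $C_j \setminus G_j \subset (U \cup G_j) \setminus G_j = U \setminus G_j \subset U$. For $U \subset W_1 \cup E_1$: given $x \in U$, by quasicontinuity/outer regularity I can choose the $\widetilde G_j$ shrinking so that $E_1 := U \setminus W_1$ has capacity zero — this is where I expect the main bookkeeping.

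The main obstacle, as signalled above, is arranging the $F_\sigma$ set $W_1$ to sit \emph{inside} $U$ while still capturing all but a capacity-zero subset of $U$: the $G_\delta$ direction is a soft limsup/liminf argument, but the $F_\sigma$ direction requires exhausting the open sets $U \cup G_j$ by closed sets and controlling, via countable subadditivity and the outer-capacity property of $\Cp$ (Corollary~1.3 in \cite{BBS5}, or Theorem~5.31 in \cite{BBbook}), that the leftover $U \setminus \bigcup_j(\text{closed}\setminus G_j)$ has zero capacity. Once the $\widetilde G_j$ are chosen with $\Cp(\widetilde G_j) < 2^{-j}$ and the closed exhaustions $C_j^{(m)} \nearrow (U \cup G_j)$ are in place, a diagonal choice $C_j := C_j^{(m(j))}$ with $\Cp((U\cup G_j)\setminus C_j)$ small reduces the leftover to a set contained in $\bigcap_k \bigcup_{j \ge k}(\widetilde G_j \cup ((U\cup G_j)\setminus C_j))$, which has capacity zero by the usual Borel--Cantelli-type estimate. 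Setting $E_1$ equal to that leftover finishes part one, and the already-constructed $W_2, E_2$ finish part two.
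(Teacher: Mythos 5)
Your $G_\delta$ half is correct and is essentially the paper's argument: with open sets $G_j$ such that $U\cup G_j$ is open and $\Cp(G_j)<2^{-j}$, the set $W_2$ (the paper simply takes $\bigcap_{j}(U\cup G_j)$; your limsup variant also works) is $G_\delta$, and $E_2:=W_2\setminus U$ is contained in a tail intersection of the $G_j$ and hence has capacity zero by subadditivity.

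The $F_\sigma$ half, however, contains a genuine gap. Your construction hinges on choosing closed sets $C_j\subset U\cup G_j$ with $\Cp\bigl((U\cup G_j)\setminus C_j\bigr)$ summably small, so that the leftover $U\setminus\bigcup_j(C_j\setminus G_j)$ is absorbed by a Borel--Cantelli estimate. Such $C_j$ do not exist in general: the Sobolev capacity is not continuous from above along decreasing sequences of open sets. Concretely, take $V=B(0,1)\subset\R^n$ with $p\le n$. Any closed $C\subset V$ is compact and disjoint from $\bdy V$, hence $C\subset\{|x|\le 1-\eps\}$ for some $\eps>0$, so $V\setminus C$ contains the annulus $\{1-\eps<|x|<1\}$ and therefore a sphere of radius close to $1$; consequently $\Cp(V\setminus C)$ is bounded below by a positive constant independent of $C$. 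So the quantity you need to make small is uniformly bounded away from zero, and no diagonal choice of exhaustions can repair this.

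The detour is also unnecessary, because the premise that triggered it --- that an open set minus an open set need not be $F_\sigma$ --- is false in a metric space. Every open set $V\subset X$ is $F_\sigma$ (write $V=\bigcup_n\{x\in X:\dist(x,X\setminus V)\ge 1/n\}$), so $(U\cup G_j)\setminus G_j=U\setminus G_j$ is the intersection of an $F_\sigma$ set with a closed set and hence itself $F_\sigma$. This is exactly the paper's proof: it decomposes
\[
U=\bigcup_{j=1}^\infty\bigl((U\cup G_j)\setminus G_j\bigr)\cup\Bigl(U\cap\bigcap_{j=1}^\infty G_j\Bigr)=:W_1\cup E_1,
\]
where $W_1$ is $F_\sigma$ as just explained and $\Cp(E_1)\le\Cp(G_j)<2^{-j}$ for every $j$, whence $\Cp(E_1)=0$. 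No outer regularity, no auxiliary sets $\widetilde G_j$, and no closed exhaustions are needed; replacing your $F_\sigma$ construction by this two-line decomposition completes the proof.
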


Not all finely open sets are Borel.
Let for instance $V=G \setm A$, where $G$ is open and $A \subset G$
is a non-Borel set with $\Cp(A)=0$.
Then $V$ is a non-Borel finely open set.
To be more specific, we may let $A \subset G \subset \R^n$ be any
non-Borel set of Hausdorff dimension $< n-p$.

\begin{proof}
By definition, for each $j=1,2,\ldots$ there is an
 open set $G_j$ such that $U\cup G_j$ is open and $\Cp(G_j)<1/j$. Then
\[
\begin{split}
U&=\biggl(U\setminus\bigcap_{j=1}^{\infty}G_j\biggr)\cup
   \biggl(U\cap\bigcap_{j=1}^{\infty}G_j\biggr)
   =\bigcup_{j=1}^{\infty}(U\setminus G_j)
    \cup \bigcap_{j=1}^{\infty} (U\cap G_j)\\
&=\bigcup_{j=1}^{\infty}((U\cup G_j)\setminus G_j)\cup \bigcap_{j=1}^{\infty}(U\cap G_j) =: W_1 \cup E_1.
\end{split}
\]
The second equality in \eqref{eq-Borel}
follows by choosing $W_2=\bigcap_{j=1}^{\infty}(U \cup G_j)$ and
$E_2=W_2\setminus U$. The last two claims follow from the choices above.
\end{proof}

\begin{proof}[Proof of Theorem~\ref{thm-bootstrap}]

By Theorem~\ref{thm-finelyopen-quasiopen}, we can find a finely open set $V\subset U$ such that $\Cp(U\setm V)=0$.
For every $x \in V$, Lemma~\ref{lem-finely-open-x}
provides us with a finely open set $V_x\Subset V$ containing $x$
and a function $v_x \in \Np_0(V)$ such that $v_x=1$ on $V_x$
and $0 \le v_x \le 1$ everywhere.
By the  quasi-Lindel\"of principle,
and the fact that $\Cp(U\setm V)=0$, we can out of these
choose
$V_j=V_{x_j}$ and $v_j=v_{x_j}$, $j=1,2,\ldots$,
so that $U=\bigcup_{j=1}^\infty V_j \cup Z$,
where $\Cp(Z)=0$.
For $k=1,2,\ldots$, set
\[
\eta_k= \chi_{X \setm Z} \max_{j=1,2,\ldots,k} v_j\in\Np_0(U).
\]

Since $\nu_E$ is a complete Borel measure which,
by Lemma~\ref{lem-v-u-E-nu-equal}
(or Lemma~3.8 in Bj\"orn--MacManus--Shan\-mu\-ga\-lin\-gam~\cite{BMS}),
is absolutely continuous with respect to the capacity $\Cp$,
it follows from Lemma~\ref{lem-quasiopen-Borel} that $U$ is $\nu_E$-measurable
and $\nu_E(Z)=0$. We are now ready to prove \ref{bootstrap-a}--\ref{bootstrap-c}.

\ref{bootstrap-a} First, assume that $u$ and $v$ are bounded.
Then $\eta_k(u-v)\in\Np(U)$, by Lemma~\ref{lem-product},
and Lemma~2.37 in Bj\"orn--Bj\"orn~\cite{BBbook}
shows that $\eta_k(u-v)\in \Np_0(U)$.
Since $u=v$ q.e.\ in $U \cap E$,
it follows that
$\eta_k(u-v)\in \Np_0(U\setm E)$.
Hence \eqref{eq-phi-nu-0} yields that
\[
\int_{U}\eta_k(u-v)\,d\nu_E = 0.
\]
Since $\eta_k\nearrow \chi_{U\setm Z}$ in $U$, dominated convergence
and the fact that $\nu_E(Z)=0$ imply that
\[
\int_{U} (u-v)\,d\nu_E=\int_{U \setm Z} (u-v)\,d\nu_E=0,
\]
and \eqref{eq-bootstrap} follows.

Next, assume that $u$ and $v$ are bounded from below.
Then, by monotone convergence and the bounded case,
\[
  \int_U u \,d\nu_E =
  \lim_{k \to \infty} \int_U \min\{u,k\} \,d\nu_E
  =   \lim_{k \to \infty} \int_U \min\{v,k\} \,d\nu_E
   = \int_U v\,d\nu_E.
\]
Finally, applying this to the positive and negative parts of $u$ and $v$ gives
\[
\int_U u_\limplus \,d\nu_E = \int_U v_\limplus \,d\nu_E
\quad \text{and} \quad
\int_U u_\limminus \,d\nu_E = \int_U v_\limminus \,d\nu_E,
\]
and hence
\[
\int_U u \,d\nu_E = \int_U u_\limplus \,d\nu_E - \int_U u_\limminus \,d\nu_E
= \int_U v_\limplus \,d\nu_E - \int_U v_\limminus \,d\nu_E = \int_U v\,d\nu_E,
\]
where the assumptions on $u$ guarantee that the subtractions are well
defined (i.e.\ not $\infty-\infty$).

\ref{bootstrap-b}
By applying \ref{bootstrap-a} to $u=u_E$ and $v$ we have
$\int_U v \,d\nu_E = \int_U u_E\,d\nu_E$.
Choosing $v \equiv 1$ yields
$
\nu_E(U) = \int_U u_E\,d\nu_E$.

\ref{bootstrap-c}
This follows by applying \ref{bootstrap-b} to $v\equiv 0$.
\end{proof}

\begin{cor} \label{cor-bootstrap}
Let $\Om$, $E$, $u_E$ and $\nu_E$ be as in Theorem~\ref{thm-bootstrap}.
Then
\[
   \nu_E(\Om \setm \bdry_p E)=  0,
\]
i.e.\
$\nu_E$ is supported on the fine boundary
$\bdry_p E:= \clEp \setm \fineint E$ of $E$.
\end{cor}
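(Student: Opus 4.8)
The plan is to decompose $\Om \setm \bdry_p E$ into the fine interior $\fineint E$ and the fine exterior $\Om \setm \clEp$, and show that $\nu_E$ charges neither. For the fine exterior this is essentially contained in the classical fact that $\nu_E$ is supported on $\bdy E \subset \clEp$, but I would prefer a self-contained argument using Theorem~\ref{thm-bootstrap}. The set $U_1 := \Om \setm \clEp$ is finely open (the complement of a fine closure), hence quasiopen by Theorem~\ref{thm-finelyopen-quasiopen}\,(a), and $U_1 \cap E = \emptyset$, so in particular $\Cp(U_1 \cap E)=0$; thus Theorem~\ref{thm-bootstrap}\,\ref{bootstrap-c} gives $\nu_E(U_1)=0$. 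Wait --- one must be slightly careful, since $U_1$ need not be a subset of $\Om$ in the sense required if $\clEp$ is taken inside $X$; but $\clEp \supset E$ and $E \subset \Om$, so working throughout with $U_1 := \Om \cap (X \setm \clEp)$, which is finely open and contained in $\Om$, the argument goes through verbatim.

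The main point is the fine interior. Set $U_2 := \fineint E$, which is finely open and contained in $E \subset \Om$, hence quasiopen. The natural idea is to apply Theorem~\ref{thm-bootstrap}\,\ref{bootstrap-b} with $U=U_2$ and $v\equiv1$: since $U_2 \subset E$ we have $v=1$ everywhere in $U_2 \cap E = U_2$, so $\nu_E(U_2) = \int_{U_2} u_E \, d\nu_E$. To conclude $\nu_E(U_2)=0$ it then suffices to show $u_E = 0$ $\nu_E$-a.e.\ in $U_2$, or equivalently to exploit that on the finely open set $U_2 \subset E$ the capacitary potential $u_E$ equals $1$ q.e. Indeed $u_E \ge \chi_E = 1$ q.e.\ in $E$ (it solves the $\K_{\chi_E,0}$-obstacle problem), and $u_E \le 1$ by the comparison principle, so $u_E = 1$ q.e.\ in $U_2$; since $\nu_E$ is absolutely continuous with respect to $\Cp$ (Lemma~\ref{lem-v-u-E-nu-equal}, or Lemma~3.8 in~\cite{BMS}), $u_E = 1$ $\nu_E$-a.e.\ in $U_2$. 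But $U_2$ is finely open with $U_2 = \fineint E \subset E \subset \Om$, and $X \setm U_2$ contains $X \setm \Om$; more to the point, I would instead observe that for any $\eta \in \Lip_c$ supported in a fine neighbourhood inside $U_2$ the function $\eta(1-u_E) \in \Np_0(\Om \setm E)$ (using that $u_E=1$ q.e.\ in $E \supset U_2$), so by~\eqref{eq-phi-nu-0} and the same dominated-convergence/exhaustion argument as in the proof of Theorem~\ref{thm-bootstrap} (covering $U_2$ by sets $V_j \Subset U_2$ via Lemma~\ref{lem-finely-open-x} and the quasi-Lindel\"of principle), $\int_{U_2}(1-u_E)\,d\nu_E = 0$, hence $\int_{U_2} u_E\,d\nu_E = \nu_E(U_2)$, combined with $u_E=1$ $\nu_E$-a.e.\ this forces $\nu_E(U_2)=0$ unless $\nu_E(U_2)=\infty$; finiteness on the bounded set $\Om$ (as $\nu_E$ is Radon) rules that out. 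Cleanest is just: $u_E=1$ $\nu_E$-a.e.\ in $U_2$ and $\nu_E(U_2)=\int_{U_2}u_E\,d\nu_E=\nu_E(U_2)$ is vacuous, so one genuinely needs the $\Np_0(\Om\setm E)$ argument to produce a nontrivial identity --- but since $U_2 \subset E$, the cleanest route is to apply Theorem~\ref{thm-bootstrap}\,\ref{bootstrap-c} directly is impossible as $\Cp(U_2\cap E)=\Cp(U_2)$ need not be zero.

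So the real argument for $U_2$: note that $1 - u_E \in \Np(\Om)$ vanishes q.e.\ on $E$, in particular q.e.\ on the finely open set $U_2$; since $U_2$ is finely open and $1-u_E$ is quasicontinuous hence (by Theorem~\ref{thm-finelyopen-quasiopen}\,(b)) finely continuous q.e., and vanishes q.e.\ on $U_2$, one gets $1-u_E=0$ q.e.\ on $U_2$ --- already known --- but the key upgrade is that $(1-u_E)|_{U_2}\in\Np_0(U_2)$ by Proposition~\ref{prop-char-Np0-qe-bdry} applied with the set $U_2$ (since $1-u_E=0$ q.e.\ on $\bdy_p U_2$, as $\bdy_p U_2 \subset \clEp{}\,$... ). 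The hard part, and the step I expect to be the main obstacle, is precisely this bookkeeping: verifying that one may legitimately plug $\eta(1-u_E)$ into~\eqref{eq-phi-nu-0} as an element of $\Np_0(\Om\setm E)$ when working with a fine neighbourhood $V_x \Subset U_2$ rather than an open one, i.e.\ that the exhaustion machinery from the proof of Theorem~\ref{thm-bootstrap} applies. Granting that, $\int_{U_2}(1-u_E)\,d\nu_E=0$; but $u_E=1$ $\nu_E$-a.e.\ in $U_2$ makes the integrand $0$ $\nu_E$-a.e., so this is automatic and gives nothing. The resolution: apply Theorem~\ref{thm-bootstrap}\,\ref{bootstrap-b} with a different $v$ --- take $v\equiv1$ on the quasiopen set $U:=U_2$, obtaining $\nu_E(U_2)=\int_{U_2}u_E\,d\nu_E$, and separately apply part~\ref{bootstrap-a} with $u\equiv1$, $v:=u_E$ (both bounded, $u=v$ q.e.\ in $U_2\cap E=U_2$ since $u_E=1$ q.e.\ there) to get $\int_{U_2}1\,d\nu_E=\int_{U_2}u_E\,d\nu_E$, i.e.\ $\nu_E(U_2)=\nu_E(U_2)$ --- still vacuous. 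Hence the genuine mechanism must be: on $U_2=\fineint E$ we in fact have $u_E$ is \emph{finely harmonic}, equivalently the measure $\nu_E$ puts no mass on $\fineint E$ because $u_E+t\phi$ is admissible for $\phi\in\Np_0(\fineint E)$ of both signs --- this is exactly~\eqref{eq-phi-nu-0} with $\Om\setm E$ enlarged to $\Om\setm\bdy_p E \supset (\Om\setm E)\cup\fineint E$, which follows since $u_E=\chi_E=1$ q.e.\ on $\fineint E$ so that $u_E\pm t\phi\in\K_{\psi,0}(\Om)$ for $\phi\in\Np_0(\fineint E)$ supported where $u_E=1$. Thus the clean statement to prove first is the analogue of Remark~\ref{rmk-Ch-pot}: $\int_\Om\phi\,d\nu_E=0$ for every $\phi\in\Np_0(\Om\setm\bdy_p E)$, obtained by splitting $\phi$ via a fine partition of unity into a part in $\Np_0(\Om\setm\clEp)$ and a part in $\Np_0(\fineint E)$ and handling each with~\eqref{eq-phi-nu-0}; then $\nu_E(\Om\setm\bdy_p E)=0$ follows by the exhaustion argument of Theorem~\ref{thm-bootstrap} exactly as in Lemma~\ref{lem-v-u-E-nu-equal}. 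The main obstacle is making this fine partition of unity rigorous, for which Lemma~\ref{lem-finely-open-x} and the quasi-Lindel\"of principle (Theorem~\ref{thm-quasiLindelof}) are the right tools, precisely as already deployed in the proof of Theorem~\ref{thm-bootstrap}.
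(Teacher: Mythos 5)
Your treatment of the fine exterior is correct and is exactly what the paper does: $\Om\setm\clEp$ is finely open and disjoint from $E$, so Theorem~\ref{thm-bootstrap}\,\ref{bootstrap-c} applies. The gap is in the fine interior $E_0:=\fineint E$. You correctly diagnose that every direct application of Theorem~\ref{thm-bootstrap} is vacuous there, and you correctly identify the statement that is actually needed, namely $\int_\Om\phi\,d\nu_E=0$ for $\phi\in\Np_0(E_0)$ (an extension of \eqref{eq-phi-nu-0} beyond $\Np_0(\Om\setm E)$). But your justification of this identity --- that $u_E\pm t\phi\in\K_{\psi,0}(\Om)$ for $\phi\in\Np_0(E_0)$ because $u_E=1$ q.e.\ on $E_0$ --- is false. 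The obstacle constraint is $v\ge1$ q.e.\ on $E$, and on $E_0\subset E$ the constraint is \emph{active} ($u_E=\psi=1$ q.e.\ there), so for nonnegative $\phi$ the downward perturbation $u_E-t\phi$ violates the obstacle exactly where $\phi>0$. The two-sided variational argument of Remark~\ref{rmk-Ch-pot} works on $\Om\setm E$ precisely because there is no obstacle there; it cannot be run inside $E$.

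The missing idea is simpler and does not use admissibility at all: since $u_E=1$ q.e.\ (hence a.e.) in $E$, locality of the Cheeger gradient gives $Du_E=0$ a.e.\ in $E$. Hence for any $\phi\in\Np_0(E_0)$ the integrand $|Du_E|^{p-2}Du_E\cdot D\phi$ vanishes a.e.\ ($D\phi=0$ a.e.\ outside $E_0$, and $Du_E=0$ a.e.\ in $E_0\subset E$), so \eqref{eq-deff-Tu} gives $\int_\Om\phi\,d\nu_E=Tu_E(\phi)=0$ directly. Applying this to the nonnegative exhaustion $\eta_k\in\Np_0(E_0)$ with $\eta_k\nearrow\chi_{E_0\setm Z}$ and $\Cp(Z)=0$, produced by Lemma~\ref{lem-finely-open-x} and the quasi-Lindel\"of principle (exactly the machinery you invoke), and using $\nu_E(Z)=0$, yields $\nu_E(E_0)=0$. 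With this repair your outline coincides with the paper's proof; the fine partition of unity you propose at the end is then unnecessary, since the two finely open pieces can be handled separately.
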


\begin{proof}
First, the fine exterior $V=\Om\setm\clEp$ is finely open and
$V\cap E=\emptyset$, whence $\nu_E(V) = 0$ by
Theorem~\ref{thm-bootstrap}\,\ref{bootstrap-c}.

Next, the fine interior $E_0:=\fineint E$ is finely open and as in the proof of
Theorem~\ref{thm-bootstrap} we can use the quasi-Lindel\"of principle to
find nonnegative $\eta_k\in\Np_0(E_0)$ such that
\[
\eta_k\nearrow \chi_{E_0\setm Z}
\quad \text{as } k \to \infty,
\]
where $\Cp(Z)=0$.
Since $u_E=1$ q.e.\ in $E$, we have $Du_E=0$ a.e.\ in $E$ and hence 
by \eqref{eq-deff-Tu}
\[
\int_\Om \eta_k \,d\nu_E = \int_\Om |Du_E|^{p-2} Du_E \cdot D\eta_k \,d\mu =0.
\]
Dominated convergence then shows that
$\nu_E(E_0 \setm Z)=0$.
Since $\nu_E(Z)=0$ by Lemma~\ref{lem-v-u-E-nu-equal}
(or Lemma~3.8 in \cite{BMS}),
the proof is complete.
\end{proof}

\begin{cor}  \label{cor-bootstrap2}
Let $\Om$, $E\subset\Om$, $u_E$ and $\nu_E=Tu_E$ be as in
Lemma~\ref{lem-v-u-E-nu-equal}.
Let $U\subset\Om$ be quasiopen. 
Then the following are true\/\textup{:}
\begin{enumerate}
\item \label{bootstrap2-a}
If $u$ is a function on $\Omega$ such that $\int_ {U\cap \bdry_p E}u\,d\nu_E$ 
is well-defined 
and $v$ is a function on $U$ such that $v=u$ q.e.\ in $U \cap \bdry_p E$, then
\[
  \int_U v \,d\nu_E = \int_U u\,d\nu_E.
\]
\item \label{bootstrap2-b}
If $v=1$ q.e.\ in $U\cap \bdry_p E$, then
\[
\nu_E(U)=\int_U v \,d\nu_E = \int_U u_E\,d\nu_E.
\]
\item \label{bootstrap2-c}
If $\Cp(U\cap \bdry_p E)=0$, then $\nu_E(U)=0$.
\end{enumerate}
\end{cor}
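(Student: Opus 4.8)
The plan is to deduce everything from Corollary~\ref{cor-bootstrap}, which says that $\nu_E(\Om\setm\bdry_p E)=0$, i.e.\ $\nu_E$ is carried by the fine boundary, together with Theorem~\ref{thm-bootstrap} and three standard facts about $\nu_E$ that are already available: it is a complete Borel measure; it is absolutely continuous with respect to $\Cp$ (as recorded in the proof of Theorem~\ref{thm-bootstrap}); and it is finite, since by Lemma~\ref{lem-v-u-E-nu-equal}, \eqref{eq-deff-Tu} and \eqref{eq-cpp-def} we have $\nu_E(\Om)=\int_\Om u_E\,d\nu_E=\int_\Om|Du_E|^p\,d\mu=\cpp(E,\Om)\simeq\cp(E,\Om)<\infty$. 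I would also recall that every quasiopen $U\subset\Om$ is $\nu_E$-measurable (as in the proof of Theorem~\ref{thm-bootstrap}), so that $U\cap\bdry_p E$ is $\nu_E$-measurable and $U\setm\bdry_p E$ is $\nu_E$-null by Corollary~\ref{cor-bootstrap}.

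For part~\ref{bootstrap2-a} the point is that everything localizes to $U\cap\bdry_p E$. Since $U\setm\bdry_p E$ is $\nu_E$-null and $\nu_E$ is complete, $u|_U$ and $v|_U$ are $\nu_E$-measurable, and $\int_U u\,d\nu_E=\int_{U\cap\bdry_p E}u\,d\nu_E$ (well-defined by hypothesis) and $\int_U v\,d\nu_E=\int_{U\cap\bdry_p E}v\,d\nu_E$. As $v=u$ q.e.\ in $U\cap\bdry_p E$ and $\nu_E$ is absolutely continuous with respect to $\Cp$, we have $v=u$ $\nu_E$-a.e.\ there, so the two right-hand sides coincide (and the one for $v$ is well-defined); hence $\int_U v\,d\nu_E=\int_U u\,d\nu_E$.

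For part~\ref{bootstrap2-b} I would obtain $\int_U u_E\,d\nu_E=\nu_E(U)$ directly from Theorem~\ref{thm-bootstrap}\,\ref{bootstrap-b} applied with the bounded function $1\in\Np(\Om)$ in place of $v$ (it trivially equals $1$ q.e.\ in $U\cap E$), and $\int_U v\,d\nu_E=\nu_E(U)$ by applying part~\ref{bootstrap2-a} with $u\equiv1$ (legitimate since $\int_{U\cap\bdry_p E}1\,d\nu_E=\nu_E(U\cap\bdry_p E)<\infty$ is well-defined), using the hypothesis $v=1$ q.e.\ in $U\cap\bdry_p E$. Part~\ref{bootstrap2-c} then follows at once: $\Cp(U\cap\bdry_p E)=0$ forces $\nu_E(U\cap\bdry_p E)=0$ by absolute continuity, while $\nu_E(U\setm\bdry_p E)=0$ by Corollary~\ref{cor-bootstrap}, so $\nu_E(U)=0$; alternatively one may apply part~\ref{bootstrap2-b} to $v=\chi_{U\cap\bdry_p E}$.

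I do not expect a genuine obstacle here: the substantive input — that $\nu_E$ is supported on $\bdry_p E$ — is exactly Corollary~\ref{cor-bootstrap}, and the rest is bookkeeping. The main point requiring care is that the integrals in part~\ref{bootstrap2-a} be meaningful for the arbitrary functions $u,v$ on the merely quasiopen set $U$, and that the q.e.\ equalities upgrade to $\nu_E$-a.e.\ equalities; both are handled by the completeness of $\nu_E$ and its absolute continuity with respect to $\Cp$.
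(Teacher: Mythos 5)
Your proof is correct and follows essentially the same route as the paper: all three parts are reduced to Corollary~\ref{cor-bootstrap} together with the absolute continuity of $\nu_E$ with respect to $\Cp$, the only (harmless) deviation being that in part~\ref{bootstrap2-b} you get $\nu_E(U)=\int_U u_E\,d\nu_E$ from Theorem~\ref{thm-bootstrap}\,\ref{bootstrap-b} with $v\equiv1$, whereas the paper invokes part~\ref{bootstrap2-a} with $u=u_E$. Your extra remarks on $\nu_E$-measurability of $U$ and of $u,v$ via completeness are exactly the bookkeeping the paper leaves implicit.
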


\begin{proof}
\ref{bootstrap2-c}
This follows directly from Corollary~\ref{cor-bootstrap} and the fact that $\nu_E$
is absolutely continuous with respect to the capacity $\Cp$
(by Lemma~\ref{lem-v-u-E-nu-equal}).

\ref{bootstrap2-a}
By Corollary~\ref{cor-bootstrap}
and the
absolute continuity of $\nu_E$ with respect to $C_p$ 
again, we see that
\[
  \int_U v \,d\nu_E = \int_{U\cap\bdry_p E} v\,d\nu_E = \int_{U\cap\bdry_p E} u\,d\nu_E =
\int_U u \,d\nu_E.
\]

\ref{bootstrap2-b}
This follows from \ref{bootstrap2-a} by choosing $u\equiv 1$ and $u=u_E$,
respectively.
\end{proof}

We end with a simple example showing that the fine boundary can be much smaller than the metric boundary. A much more involved example in the same spirit is given in Section~9 in Bj\"orn--Bj\"orn~\cite{BBnonopen}.

\begin{example} Let $B$ be an open ball in $\R^n$, $1<p\le n$,
and let $E=B\setminus\Q^n$.
The set $E$ is finely open and has fine closure $\clEp=\itoverline{B}$.
Hence $\bdry_p E= \bdy B \cup (B \cap \Q^n)$, while $\bdy E=\itoverline{B}$.
\end{example}

\end{document}